\newtheorem{theorem}{Theorem}[section]
\newtheorem{proposition}[theorem]{Proposition}
\newtheorem{corollary}[theorem]{Corollary}
\theoremstyle{definition}
\newtheorem{definition}[theorem]{Definition}
\theoremstyle{definition}
\theoremstyle{remark}
\newtheorem{remark}[theorem]{Remark}
\newtheorem{example}[theorem]{Example}
\def\cL{\mathcal{L}}
\def\RR{\mathbb{R}}
\def\NN{\mathbb{N}}
\newcommand{\jmp}[1]{[\![#1]\!]}                     
\newcommand{\mvl}[1]{\{\!\!\{#1\}\!\!\}}             
\newcommand{\enorm}[2]{\|\,{#1}\,\|_{#2}}       
\newcommand{\change}[1]{{ #1}}
\title[Landscape Adaptive Refinement]{Adaptive refinement for
  eigenvalue problems based on an associated source problem}
\author{Stefano Giani, Jeffrey Ovall and Gabriel Pinochet-Soto}
\date{\today}
\begin{document}

\maketitle

\section{Introduction}

This paper concerns the efficient approximation of several eigenpairs
of a class of selfadjoint elliptic operators via an adaptive finite
element scheme.  We consider eigenvalue problems of the form
\begin{align}\label{Problem}
\cL\psi=\lambda\psi\mbox{ in }\Omega\;,\; \psi=0\mbox{ on }\partial\Omega\quad,\mbox{ where }
\cL w=-\nabla\cdot(A\nabla w)+Vw~,
\end{align}
and $\Omega\subset\RR^2$ is a bounded domain.  The potential $V$ and
matrix $A$ are assumed to be piecewise smooth on some partition of the
domain, $\overline\Omega=\bigcup_{k=1}^m\overline{\Omega_k}$ and
$\Omega_i\cap\Omega_j=\emptyset$ when $i\neq j$.  We further assume
that $V\geq 0$ and that $A$ is uniformly positive definite a.e. on
$\Omega$.  Since we will discretize the problem using standard finite
elements, it is convenient to assume that $\Omega$ and the subdomains
$\Omega_k$ are polygons, and that all triangulations of $\Omega$
respect the partition (i.e. each triangle is contained in precisely
one of the $\Omega_k$). The approach we will propose extends naturally
to $\RR^3$, but our numerical experiments will be in 2D, so we focus
on that case in the discussion.

Adaptive finite element schemes for eigenvalue computations have a
long history, with contributions typically mirroring (and often
following soon after) their counterparts for source problems.  In the
case of computing clusters of eigenvalues, where repeated eigenvalues
(or merely tightly clustered eigenvalues) may be present, together
with bases for their corresponding invariant subspaces is more a
challenging task, and contributions in this \change{direction are relatively}
recent in comparison with those that concern a few simple eigenvalues
near the bottom of the spectrum. We do not attempt a comprehensive
survey of the extensive literature on the topic, and instead provide
some references that more specifically concern the approximation of
several, possibly clustered, eigenvalues, together with a basis of the
corresponding invariant
subspace~\cite{Grubisic2009,Bank2013,Gallistl2015,Boffi2017,Cances2020,Giani2021,Giani2021a,Liu2022}.
A common theme of nearly all of these contributions it that, whatever
strategy one ultimately chooses for adaptively enriching the finite
element space used to approximate the eigenpairs, the resulting finite
element space should be well-suited to approximate generic functions
in the entire invariant subspace.  An exception to this pattern is the
work~\cite{Giani2021a}, in which multiple meshes/FEM spaces are
employed.

The generic template for adaptive finite element procedures, for
source problems or eigenvalue problems, is to \change{repeat the following}
steps until some convergence criterion is met:
\begin{center}
SOLVE $\longrightarrow$ ESTIMATE $\longrightarrow$ MARK 
$\longrightarrow$ REFINE
\end{center}
Global error estimates and local error indicators are obtained in the
ESTIMATE step from the approximation(s) obtained in the SOLVE step.
In the MARK step, some elements are flagged for refinement based on
the local indicators.  These elements are refined, either by
subdividing the element ($h$ refinement) or increasing the polynomial
degree in that element ($p$ refinement).  Some other elements,
typically near the marked elements, may also need to be refined to
maintain some type of conformity. In the case of eigenvalue problems
in which multiple eigenpairs are sought, elements are generally marked
for refinement based on local error indicators that consider
\textit{all} of the current computed eigenpairs---different approaches
primarily differ in \textit{how} this information is used for marking.
A distinguishing feature of the present contribution is that we
propose a marking and refinement strategy that is based solely on the
approximate solution of a single source problem, namely
\begin{align}\label{Landscape}
\cL u=1\mbox{ in }\Omega\;,\; u=0\mbox{ on }\partial\Omega~.
\end{align}
We will refer to $u$ as the \textit{landscape function} for the
operator and domain, which is what this function is typically called
in the context of eigenvector localization
(cf.~\cite{Filoche2012,Arnold2016,Arnold2019,Arnold2019a}).

\change{As will be seen in Corollary~\ref{LControl} and
  Proposition~\ref{FourierLandscape}, and in the experiments, we
  expect the landscape function to reliably encode information
  concerning where eigenvectors are singular or smooth, or where they
  exhibit other interesting features such as localization.  As such,
  we contend that refinement based on features of the landscape
  function will typically provide a family of adapted meshes that
  resolve the eigenvectors of interest at an optimal rate, and will do
  so more efficiently than the eigenvector cluster approach,
  particularly in the case of $hp$-adaptivity.}

\begin{remark}[Some caveats]\label{LRCanFail}
  It is possible to construct examples, particularly in 1D, for which
  the landscape function $u$ is a polynomial (of low degree) in some
  subdomain $R$, but for which the eigenvectors $\psi_j$ of interest
  are not polynomial on $R$.  For such an example we might rightly
  expect refinement driven by approximations of $u$ to cease refining
  within $R$ at some stage, and thereby fail to provide convergent
  eigenvector approximations.  Such situations are highly unlikely in
  practice, however, and we will provide both \change{theory and substantial
  empirical evidence} that ``landscape refinement'' is a very effective
  approach for the efficient approximation of (large) clusters of
  eigenvalues and eigenvectors.  \change{Such examples illustrate why
    a result of the form $\|\psi-\psi_{hp}\|\leq C
    \|u-u_{hp}\|$ cannot hold in general.  However, the preceding
    paragraph suggests why such a result does not need to hold for the
    approach to be effective.  In particular, although we do
    compute a posteriori estimates of $\|u-u_{hp}\|$ for some
    comparisons, these are not used as part of any stopping criterion.
  }
  In Example~\ref{DiscDiff}, we
  illustrate a different situation in which a few eigenvectors within
  a large cluster are converging more slowly under landscape
  refinement than the rest, and we indicate \change{what goes wrong in
    this case, and how to efficiently restore the proper rate of convergence}.
\end{remark}

The rest of the paper is structured as follows.  In
Section~\ref{Theory}, we provide theoretical results that link the
landscape function $u$ to eigenpairs $(\lambda_j,\psi_j)$ in a way
that makes the efficient approximation of such eigenvectors on finite
element spaces based on meshes generated by landscape refinement more
reasonable than it may appear at first glance.  Preliminary
experiments in this section add to the evidence that landscape
refinement might actually be a viable alternative to refinement based
on the (approximate) eigenpairs of interest.
Corollary~\ref{LControl}, which concerns the pointwise control of
properly scaled eigenvectors by $u$ is, by now, a well-known
result (cf.~\cite{Filoche2012,Arnold2019a,Arnold2016,David2021}).
Our more general result, Theorem~\ref{LandscapeStability}, does not
appear to be broadly known---we have not found it in the literature.
Apart from the initial proof-of-concept computations in Section~\ref{Theory}, our
numerical experiments are done using the SIPG variant of $hp$ DG
finite element discretizations.  These are briefly described in
Section~\ref{sec:DGmethods}, together with the error estimators and
local error indicators that we use in our experiments.
Section~\ref{Experiments} contains the bulk of the experiments, in
which we illustrate the effectiveness of our approach on a variety of
problems, making relevant comparisons with more traditional approaches.
Details about the algorithms used for these comparisons are given in
Section~\ref{sec:alg}, and further enhancements of our proposed
algorithm are considered in Section~\ref{AlgOpt}.  A few concluding
remarks are given in Section~\ref{Conclusions}


\section{Basic Properties of the Landscape Function}\label{Theory}

The assumptions on $A$ and $V$ imply that we have the classical
maximum principle for the operator $\cL$.  A straight-forward
application of the Maximum Principle yields the following refinement
of the standard stability estimate, which involves the landscape
function in a very natural way.  We provide a brief proof for
completeness.
\begin{theorem}[Pointwise Stability Estimate]\label{LandscapeStability}
If $v\in C(\overline{\Omega})$ and $\cL v\in L^{\infty}(\Omega)$, then
\begin{align}\label{LStab}
|v(x)|\leq \|v\|_{L^\infty(\partial\Omega)}+\|\cL v\|_{L^{\infty}(\Omega)}\,u(x)~,
\end{align}
for each $x\in\overline\Omega$.
\end{theorem}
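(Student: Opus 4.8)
The plan is to reduce \eqref{LStab} to the classical weak maximum principle for $\cL$, using crucially that the zeroth-order coefficient $V$ is nonnegative, so that nonnegative constants behave as sub/supersolutions ``in the right direction''. Write $M=\|v\|_{L^\infty(\partial\Omega)}$ and $K=\|\cL v\|_{L^\infty(\Omega)}$, both finite by hypothesis. Recall that $\cL u=1$ in $\Omega$ and $u=0$ on $\partial\Omega$; applying the maximum principle to $-u$ (which satisfies $\cL(-u)=-1\le 0$ and vanishes on $\partial\Omega$) shows in passing that $u\ge 0$ on $\overline\Omega$, so \eqref{LStab} is a genuine two-sided bound.

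The key step is to introduce the comparison functions $w_\pm:=\pm v-M-Ku\in C(\overline\Omega)$. Since $\cL$ is linear, $\cL 1=V$, and $\cL u=1$, one computes
\begin{align*}
\cL w_\pm=\pm\,\cL v-VM-K .
\end{align*}
Because $|\cL v|\le K$ a.e.\ in $\Omega$ we have $\pm\,\cL v-K\le 0$ a.e., and because $V\ge 0$, $M\ge 0$ we have $-VM\le 0$; hence $\cL w_\pm\le 0$ in $\Omega$. On the boundary $u=0$, so $w_\pm=\pm v-M\le|v|-M\le 0$ on $\partial\Omega$.

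Now I would invoke the weak maximum principle for $\cL$ (available here since $A$ is uniformly elliptic, $V\ge 0$, and all data are bounded): a continuous function $w$ with $\cL w\le 0$ in $\Omega$ satisfies $\max_{\overline\Omega}w\le\max_{\partial\Omega}w^+$. Applied to $w_\pm$, whose boundary values are $\le 0$, this forces $w_\pm\le 0$ throughout $\overline\Omega$, i.e.\ $\pm v(x)\le M+Ku(x)$ for every $x\in\overline\Omega$. Combining the two signs yields $|v(x)|\le M+Ku(x)$, which is precisely \eqref{LStab}.

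The only point requiring care is the regularity at which the maximum principle is applied: $v$ is assumed merely continuous with $\cL v\in L^\infty(\Omega)$, not $C^2$. One resolves this either by reading $\cL w_\pm\le 0$ variationally and using the maximum principle for $H^1$ subsolutions, or by appealing to the pointwise comparison principle valid for strong solutions with $L^\infty$ right-hand side. Since the text already grants ``the classical maximum principle for the operator $\cL$'' under the standing hypotheses on $A$ and $V$, I would simply cite that; everything else is the sign bookkeeping above, so I do not expect a genuine obstacle.
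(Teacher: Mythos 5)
Your proof is correct and follows essentially the same route as the paper: both construct the comparison functions $w_\pm=\pm v-\|\cL v\|_{L^\infty(\Omega)}\,u$ (you additionally fold the boundary constant $M$ into $w_\pm$, which costs you one extra use of $V\ge 0$ via $\cL 1=V$) and conclude by the weak maximum principle. Your added remarks on the sign of $u$ and on the regularity at which the maximum principle is invoked are sensible refinements of the same argument, not a different one.
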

\begin{proof}
  Let $w_\pm=\pm v-\|\cL v\|_{L^{\infty}(\Omega)}\,u(x)$.  By
  construction, we have $\cL w_{\pm}\leq 0$ on $\Omega$, and it
  follows by the Maximum Principle that
\begin{align*}
w_{\pm}(x)\leq \|w_{\pm}\|_{L^\infty(\partial\Omega)}=\|v\|_{L^\infty(\partial\Omega)}~.
\end{align*}
Rearranging terms yields~\eqref{LStab}.
\end{proof}
\noindent
The standard stability result replaces $u(x)$ by a constant $C$
in~\eqref{LStab}.  The inequality~\eqref{LStab} is sharp, as can be
seen by choosing $v=u$.

An immediate consequence of this stability result for the eigenvalue
problem~\eqref{Problem} is given below.
\begin{corollary}[Pointwise Control of Eigenvectors]\label{LControl}
Let $(\lambda,\psi)$ be an eigenpair of $\cL$.  It holds that
\begin{align}\label{LCont}
\frac{|\psi(x)|}{\lambda\|\psi\|_{L^\infty(\Omega)}}\leq 
u(x)\mbox{ for all }x\in\overline\Omega~.
\end{align}
\end{corollary}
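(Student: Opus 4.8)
The plan is to obtain \eqref{LCont} as an immediate specialization of the pointwise stability estimate \eqref{LStab}. First I would apply Theorem~\ref{LandscapeStability} with the choice $v=\psi$. To do this legitimately I must check the two hypotheses of that theorem: that $\psi\in C(\overline\Omega)$ and that $\cL\psi\in L^\infty(\Omega)$. Since $\psi$ is an eigenfunction it lies in $H^1_0(\Omega)$ and satisfies $\cL\psi=\lambda\psi$ in the weak sense. Using that $A$ and $V$ are piecewise smooth on the polygonal partition of $\Omega$, standard interior elliptic regularity together with De Giorgi--Nash--Moser estimates shows that $\psi$ is bounded and Hölder continuous on $\overline\Omega$, vanishing on $\partial\Omega$ by the boundary condition in \eqref{Problem}; in particular $\cL\psi=\lambda\psi\in L^\infty(\Omega)$. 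Thus both hypotheses hold.

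With $v=\psi$, the two terms on the right-hand side of \eqref{LStab} simplify. The boundary term $\|\psi\|_{L^\infty(\partial\Omega)}$ vanishes because $\psi=0$ on $\partial\Omega$. For the other term, $\|\cL\psi\|_{L^\infty(\Omega)}=\|\lambda\psi\|_{L^\infty(\Omega)}=\lambda\,\|\psi\|_{L^\infty(\Omega)}$, where I use that $\lambda>0$: coercivity of the bilinear form $w\mapsto\int_\Omega A\nabla w\cdot\nabla w+Vw^2$ on $H^1_0(\Omega)$, which follows from $V\geq 0$, the uniform positive definiteness of $A$, and the Poincaré inequality, forces every eigenvalue to be strictly positive. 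Hence \eqref{LStab} reduces to $|\psi(x)|\leq \lambda\,\|\psi\|_{L^\infty(\Omega)}\,u(x)$ for every $x\in\overline\Omega$.

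Finally I would divide both sides by $\lambda\,\|\psi\|_{L^\infty(\Omega)}$, which is strictly positive since $\lambda>0$ and $\psi\not\equiv 0$, to obtain \eqref{LCont}. There is no genuinely hard step in this argument; the only place that warrants a small amount of care is verifying that an eigenfunction meets the continuity and $L^\infty$ regularity assumptions of Theorem~\ref{LandscapeStability}, and even this is routine under the standing hypotheses on $A$, $V$, and $\Omega$. I would also note, for context, that the bound is consistent with the sharpness remark following Theorem~\ref{LandscapeStability}: taking $v$ equal to a suitably scaled positive ground state shows that the estimate \eqref{LCont} cannot be improved by a constant factor independent of the data.
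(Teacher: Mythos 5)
Your proposal is correct and follows exactly the route the paper intends: the corollary is stated there as an immediate consequence of Theorem~\ref{LandscapeStability}, obtained by taking $v=\psi$, using $\psi=0$ on $\partial\Omega$ and $\cL\psi=\lambda\psi$, and dividing by $\lambda\|\psi\|_{L^\infty(\Omega)}>0$. Your added care about the regularity of $\psi$ and the positivity of $\lambda$ is a reasonable elaboration of details the paper leaves implicit.
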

\noindent
This result first appeared in~\cite{Filoche2012} (with a different
proof), and demonstrates an important connection between the landscape
function and eigenvectors lower in the spectrum: where the landscape
function is relatively small, such eigenvectors are obliged to be
relatively small; and where the landscape function is relatively
large, such eigenvectors are permitted to be relatively large.  We
illustrate Corollary~\ref{LControl} in Example~\ref{1DExample}.  The
landscape function, through this result and many others, provides a
surprisingly rich understanding of the phenomenon of eigenvector
localization, at least lower in the spectrum.

\begin{example}[1D Example]\label{1DExample}
  In order to illustrate Corollary~\ref{LControl}, we consider the 1D
  problem in which $\Omega=(0,1)$ and $\cL=-\frac{d}{dx^2}+V$.  Here,
  the potential $V$ is piecewise constant on a uniform partition of
  $\Omega$ into 32 subintervals of equal length, as pictured in
  Figure~\ref{1DFigA}.  It varies between $5274.4361$ and $98928.04$.
  The corresponding landscape function is also given in this figure,
  and its local extrema are highlighted. The landscape function in
  this case has 13 local maxima.  The largest of these is roughly
  $1.3501\times 10^{-4}$, and it occurs near $x=0.42334$; the smallest
  is roughly $1.5101\times 10^{-5}$, and it occurs near $x=0.20361$.
  The landscape theory
  from~\cite{Filoche2012,Arnold2016,Arnold2019,Arnold2019a} predicts
  that there will be a ``ground state'' eigenvector associated with
  each of the peaks of $u$, and that each such ground state is largely
  concentrated (localized) near the corresponding local maximizer.
  Such a ``ground state'' has its peak near the corresponding peak of
  $u$, and decays rapidly away from this peak, generally in such a way
  that you cannot \textit{see} any sign changes---we note that there
  is only one true ground state, associated with the smallest
  eigenvalue, and that is the only eigenvector that is purely of one
  sign on $\Omega$.  These assertions are illustrated in
  Figure~\ref{1DFigB}.  In this figure, we show scaled versions of the
  eigenvectors $\psi_{j}$, for $1\leq j\leq 9$ and $47\leq j\leq 49$.
  Among these, six correspond to what one might naturally consider a
  ground state, namely those for $j=1,2,3,6,9$ and $j=48$.  The
  eigenvector $\psi_{48}$ is associated with the smallest of the peaks
  of $u$, and is the last of the ground states.  The eigenvectors
  $\psi_4$ and $\psi_7$ might also be considered ground states,
  despite the fact that we can clearly observe a sign change, because
  their peaks are very closely related to two peaks of $u$ that are
  not otherwise ``accounted for'', in the sense that, if we did not
  count them as ground states, we would not have 13 ground states to
  match with the 13 local maxima of $u$.  The five remaining ground
  states, which are not pictured in Figure~\ref{1DFigB}, are
  $\{\psi_{10},\psi_{13},\psi_{16},\psi_{19},\psi_{27}\}$.
\end{example}

\begin{figure}
\centering
\includegraphics[width=0.45\textwidth]{./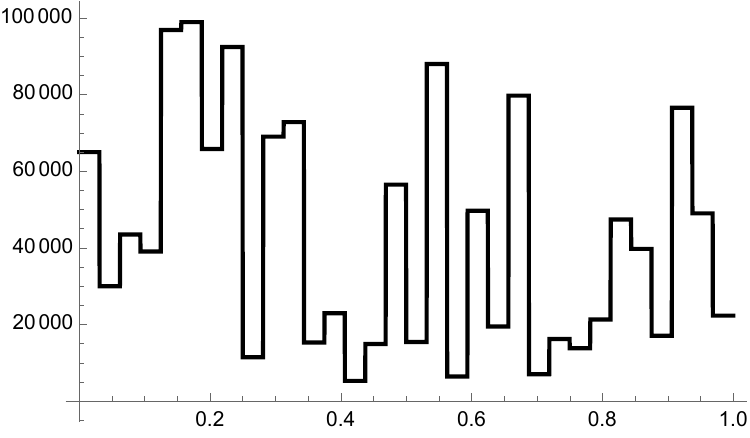}
\includegraphics[width=0.5\textwidth]{./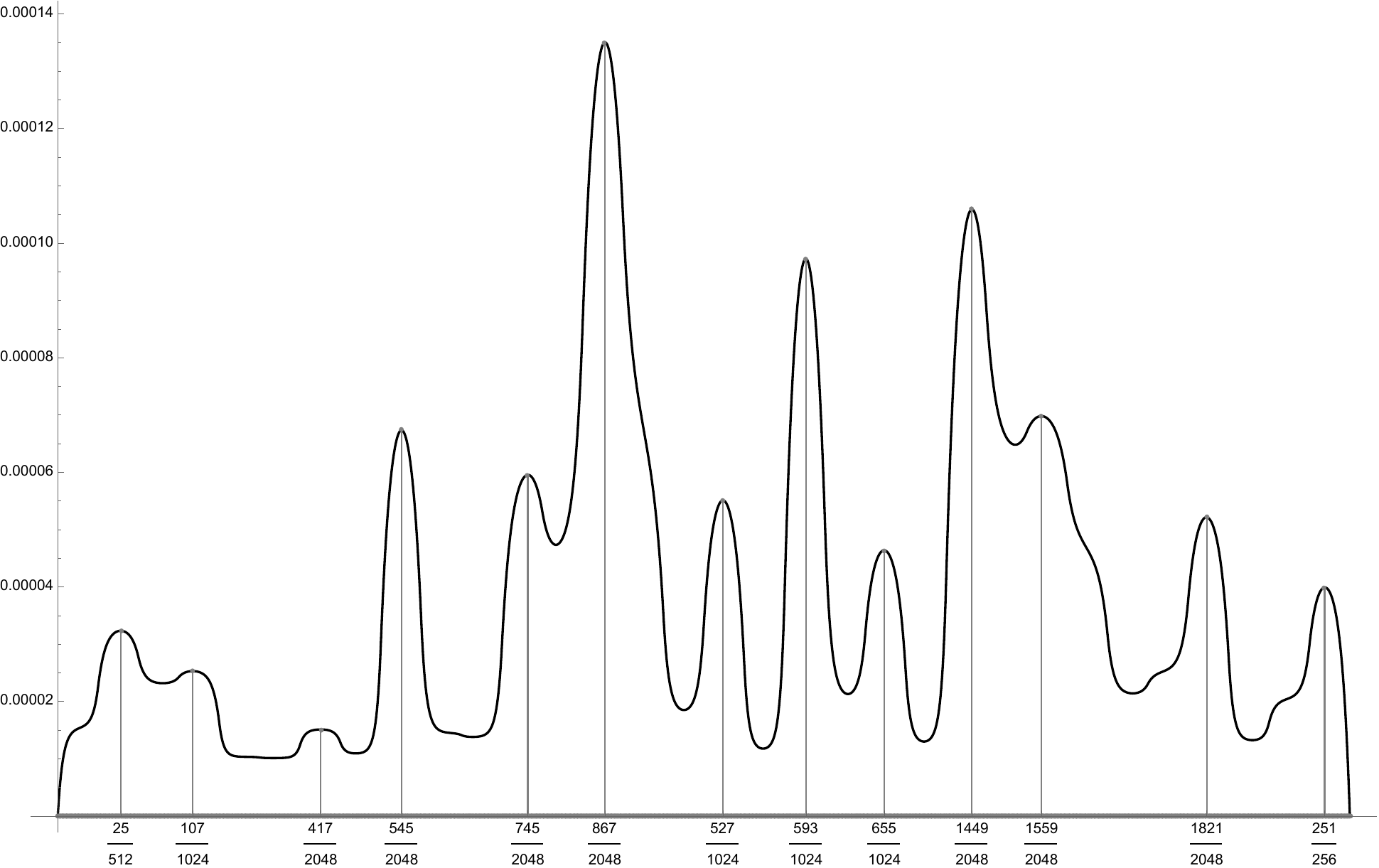}
\caption{The potential $V$ and associated landscape function $u$ for Example~\ref{1DExample}.  The local maxima of $u$ are highlighted.}\label{1DFigA}
\end{figure}

\begin{figure}
\centering                                                
\includegraphics[width=0.32\textwidth]{./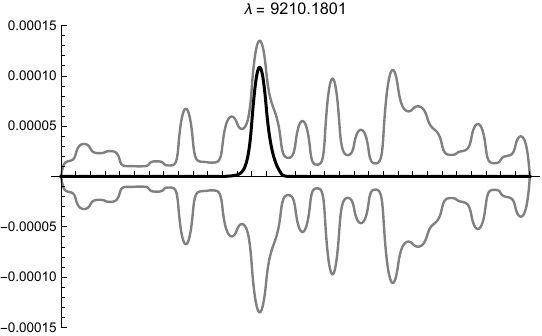}
\includegraphics[width=0.32\textwidth]{./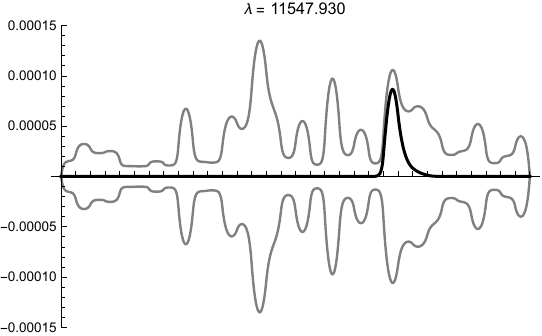}
\includegraphics[width=0.32\textwidth]{./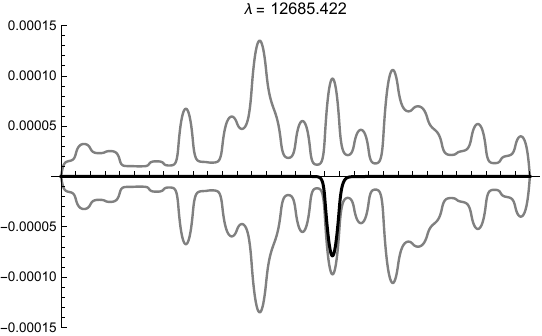}
\includegraphics[width=0.32\textwidth]{./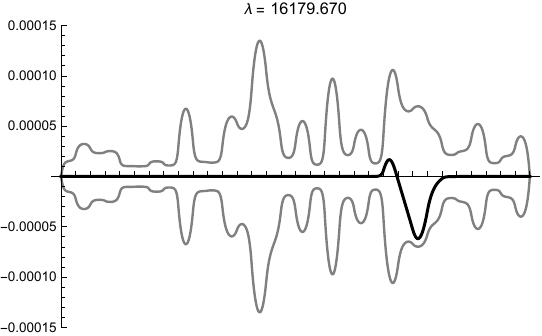}
\includegraphics[width=0.32\textwidth]{./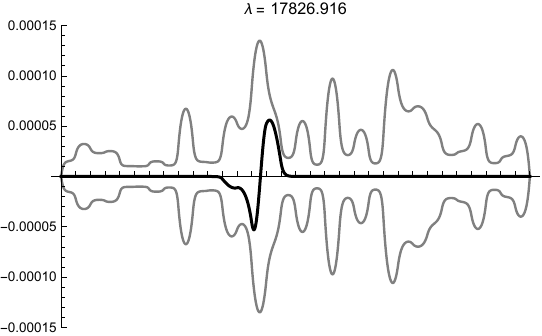}
\includegraphics[width=0.32\textwidth]{./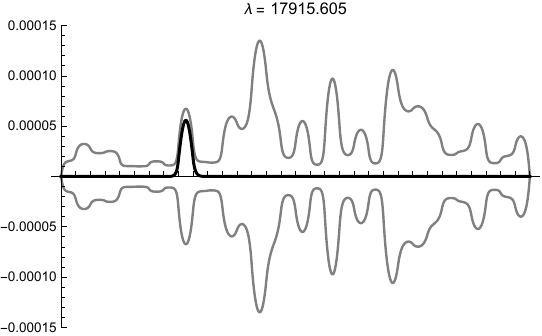}
\includegraphics[width=0.32\textwidth]{./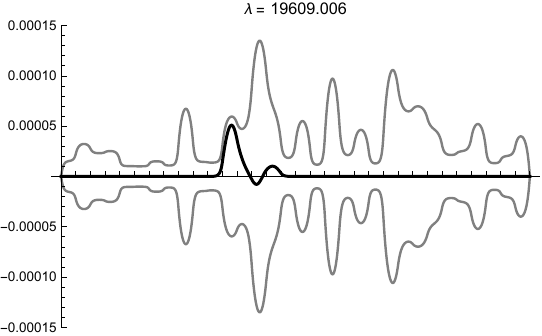}
\includegraphics[width=0.32\textwidth]{./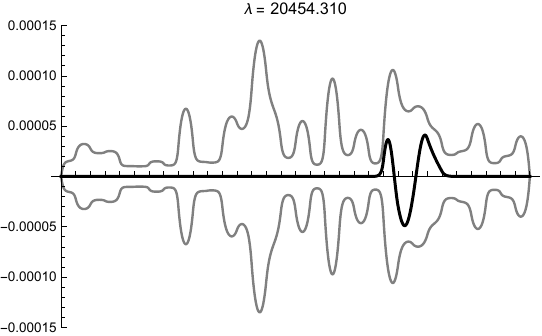}
\includegraphics[width=0.32\textwidth]{./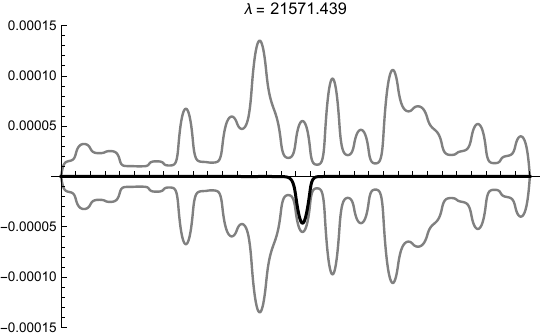}
\includegraphics[width=0.32\textwidth]{./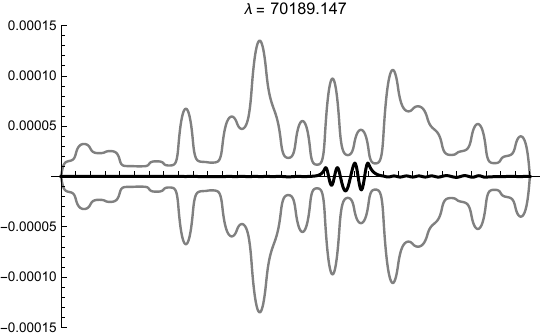}
\includegraphics[width=0.32\textwidth]{./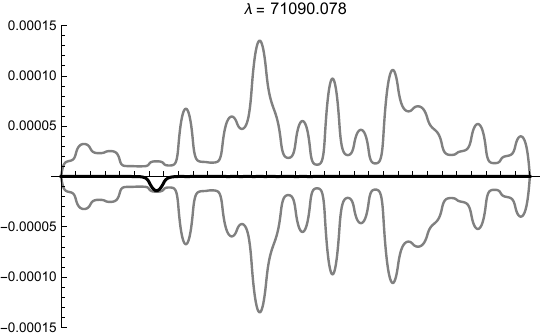}
\includegraphics[width=0.32\textwidth]{./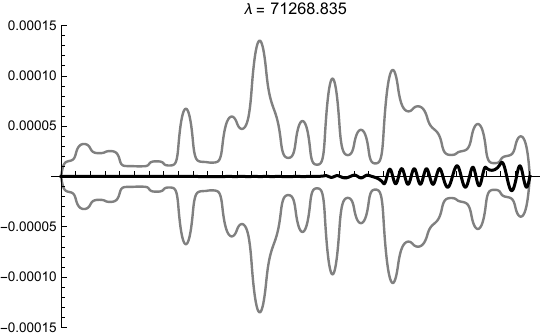}
\caption{\label{1DFigB} Several scaled eigenvectors (black) early in
  the spectrum, together with $u$ and $-u$ (grey), which act as an envelope
  of the eigenvectors.  The eigenvectors are scaled as indicated
  in~\eqref{LCont}.  Eigenvectors 1-9 and 47-49 for Example~\ref{1DExample}.}
\end{figure}

We have seen in this example how the landscape function provides
remarkably tight control of the behaviour of ground state
eigenvectors.  We have also seen that, although~\eqref{LCont} holds
for any eigenpair of $\cL$, the bound is much more meaningful lower in
the spectrum, for eigenvectors that are not highly oscillatory.
Instead of considering how the landscape function governs the
behaviour of eigenvectors low in the spectrum, we might consider how
such eigenvectors dictate the shape of the landscape function. This
perspective is captured in the following result, whose proof is
omitted.
\begin{proposition}[Fourier Expansion of the Landscape
  Function]\label{FourierLandscape}  Let $(\lambda_n,\psi_n)$,
  $n\in\NN$, be eigenpairs of $\cL$ such that $\{\psi_n:\,n\in\NN\}$ is
  an orthonormal Hilbert basis (a Fourier basis) of $L^2(\Omega)$.
  The landscape function has an expansion
  \begin{align}\label{eq:landscape_expansion}
    \change{u=\sum_{n\in\NN}c_n\,\psi_n
    \;,\; c_n=\left(\int_\Omega \psi_n\,dx\right)/\lambda_n~.}
  \end{align}
\end{proposition}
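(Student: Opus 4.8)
The plan is to compute the Fourier coefficients $c_n=\int_\Omega u\,\psi_n\,dx$ directly. Since $\{\psi_n:n\in\NN\}$ is an orthonormal Hilbert basis of $L^2(\Omega)$ and the weak solution $u$ of~\eqref{Landscape} belongs to $H_0^1(\Omega)\subset L^2(\Omega)$ (note $1\in L^2(\Omega)$ because $\Omega$ is bounded), we automatically have $u=\sum_{n\in\NN}c_n\,\psi_n$ with convergence in $L^2(\Omega)$; the only thing left to establish is the stated formula for $c_n$.

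First I would note that $\lambda_n>0$ for every $n$. Since $A$ is uniformly positive definite a.e.\ and $V\geq 0$, the bilinear form $a(v,w)=\int_\Omega\bigl(A\nabla v\cdot\nabla w+Vvw\bigr)\,dx$ associated with $\cL$ is bounded and coercive on $H_0^1(\Omega)$ (coercivity via the Poincar\'e inequality), so every Dirichlet eigenvalue of $\cL$ is strictly positive and dividing by $\lambda_n$ in~\eqref{eq:landscape_expansion} is legitimate. The weak formulation of~\eqref{Landscape} reads $a(u,v)=\int_\Omega v\,dx$ for all $v\in H_0^1(\Omega)$, while the eigenpair $(\lambda_n,\psi_n)$ satisfies $a(\psi_n,v)=\lambda_n\int_\Omega\psi_n v\,dx$ for all $v\in H_0^1(\Omega)$. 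Taking $v=\psi_n$ in the first identity, $v=u$ in the second, and using the symmetry of $a$ gives
\begin{align*}
\lambda_n\,c_n=\lambda_n\int_\Omega u\,\psi_n\,dx=a(\psi_n,u)=a(u,\psi_n)=\int_\Omega\psi_n\,dx~,
\end{align*}
which is precisely $c_n=\bigl(\int_\Omega\psi_n\,dx\bigr)/\lambda_n$. Combined with the Hilbert-basis expansion, this yields~\eqref{eq:landscape_expansion}.

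I do not expect a genuine obstacle: the identity is a one-line consequence of selfadjointness once the standard functional-analytic setting is recorded (weak solution in $H_0^1(\Omega)$, coercivity forcing $\lambda_n>0$, completeness of the eigenbasis in $L^2(\Omega)$). The only points meriting a remark are that $\psi_n\in H_0^1(\Omega)$, so that it is an admissible test function, which holds because eigenfunctions of $\cL$ are themselves weak solutions in $H_0^1(\Omega)$; and that, if one wishes convergence of the series in a norm stronger than that of $L^2(\Omega)$ (say in $H_0^1(\Omega)$), one additionally observes $\sum_n\lambda_n c_n^2=a(u,u)<\infty$ from the same computation. For the statement as posed, $L^2(\Omega)$ convergence is all that is claimed, and it is automatic.
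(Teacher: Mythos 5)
Your argument is correct, and since the paper explicitly omits the proof of Proposition~\ref{FourierLandscape} (``whose proof is omitted''), there is nothing to compare against; what you have written is the standard argument the authors clearly intend: expand $u$ in the orthonormal eigenbasis and identify $c_n=\int_\Omega u\,\psi_n\,dx$ via the weak formulations $a(u,\psi_n)=\int_\Omega\psi_n\,dx$ and $a(\psi_n,u)=\lambda_n\int_\Omega u\,\psi_n\,dx$ together with the symmetry of $a$. Your side remarks (positivity of $\lambda_n$ from coercivity, admissibility of $\psi_n$ as a test function, and the stronger $H_0^1$ convergence from $\sum_n\lambda_n c_n^2=a(u,u)<\infty$) are all accurate and complete the picture.
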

\noindent
Roughly speaking, we expect the dominant features of $u$ to look like
a finite linear combination of ground states lower in the spectrum,
because it is these vectors that should have the largest coefficients,
$c_n$, in a relative sense.  This assertion is illustrated in
Figure~\ref{1DFigC}.
\begin{figure}
\centering
\includegraphics[width=0.48\textwidth]{./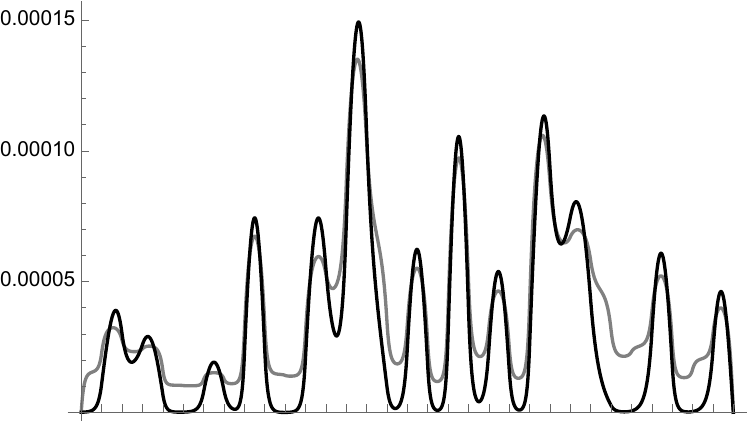}
\includegraphics[width=0.48\textwidth]{./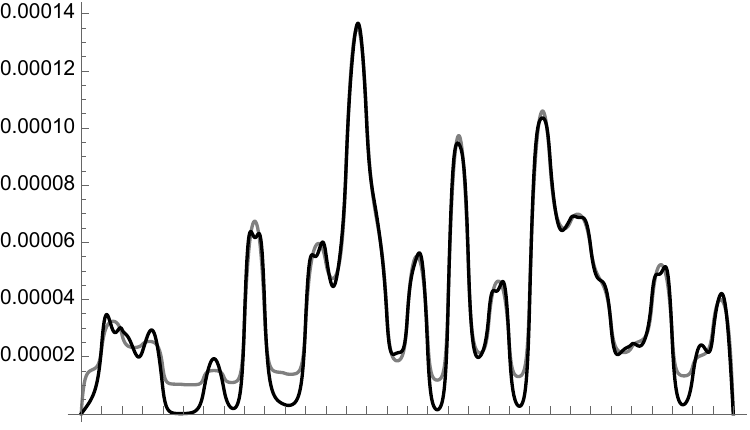}
\caption{\label{1DFigC} The landscape function $u$ (grey) for Example~\ref{1DExample} and its partial
  Fourier expansion (black) using only the 13 ground states. At right,
the partial Fourier expansion of $u$ using the first $50$ eigenvectors.}
\end{figure}

\change{The eigenbasis expansion~\eqref{eq:landscape_expansion}
  indicates how features of the eigenvectors are encoded in the
  landscape function $u$.  More specifically, this expansion
  demonstrates why we generally expect the landscape function to
  identify the locations and strengths of all singularities that
  \textit{may} occur in eigenvectors, highlighting the strongest
  singularities that can occur at each singular point.  In particular
  $u$ will have the same singularities as the ground state eigenvector
  $\psi_1$, which often exhibits the strongest singularities that can
  occur for any eigenvector.  The causes and strengths of
  singularities for the types of problems we consider are well-known
  (cf.\cite{Borsuk2006,Grisvard1985,Grisvard1992,
    Kellogg1974,Kondratiev1967,Wigley1964}).  More specifically, these
  authors identify the kinds of singularities that can occur in
  solutions source problems $\cL v = f$ under reasonable regularity
  assumptions on $f$.  This theory clearly applies to the case of
  eigenvalue problems, by taking $f=\lambda v$, so it provides a rich
  understanding of the sorts of singularities eigenvectors may
  exhibit.  Assuming that $u$ does encode the possible singularities
  of eigenvectors, and that adaptive mesh refinement for $u$ is done
  in such a way as to generate meshes/spaces that yield optimal order
  convergence of approximations of $u$, it is unsurprising that
  computed eigenvectors also converge at the optimal order, \textit{at
    least asymptotically}.  This is precisely the behavior we observe
  in nearly all of our experiments---Example~\ref{DiscDiff}, briefly
  discussed below, offers a counter-example.
  
  It is possible to construct problems for which the source term $f=1$
  is orthogonal to all eigenvectors having a given singular behavior,
  in which case the landscape function will fail to identify the
  singularity.  This is almost certainly what is happening in
  Example~\ref{DiscDiff}, for which we observe that the (empirically)
  most singular eigenvectors converge more slowly under landscape
  refinement than the rest.  In such a case, one might naturally
  consider using a different source problem $\cL u_f = f$ to drive
  adaptivity, possibly in conjunction with the landscape function.
  For example, if it appears that $1$ is orthogonal to all
  eigenvectors having a given singularity, then choosing a simple
  non-trivial $f$ that is orthogonal to $1$ may identify, via $u_f$,
  the correct singular behavior of the eigenvectors that was missed by
  $u$.  In fact, one expects a ``randomly chosen'' $f$ to do the job.
  Understood in that light, one might wonder why we propose $f=1$ as
  the default choice.  Theorem~\ref{LandscapeStability},
  Corollary~\ref{LControl} and Example~\ref{1DExample}, provide
  justification that this is a very sensible choice, and our
  experiments provide further support.  Furthermore, the body of work
  exemplified by the aforementioned
  contributions~\cite{Filoche2012,Arnold2019a,Arnold2016,David2021}
  demonstrates the perhaps surprising amount of information $u$
  reveals about the spectrum of Schr\"odinger operators.
  Additionally, the choice $f=1$ does not introduce any particular
  biases toward certain features that may not be present in most eigenvectors.


  We are primarily interested in employing adaptive refinement based
  on approximations of the landscape function when computing moderate
  or large collections of eigenpairs.  In that setting, it is probable
  that some (many) of the eigenvectors of interest will have the strongest singularities
  that are possible for the given domain and differential operator,
  and our adaptive scheme is expected to yield convergence of
  \textit{collective measures} of error that are not only
  asymptotically optimal, but also quite reasonable in the
  pre-asymptotic regime.  Of course, our one-mesh-fits-all approach
  will be overly aggressive in its refinement toward singular points
  in terms of the approximation of smooth(er) eigenvectors, but we do
  not see this as a disqualifying drawback.  At any rate, refinement
  based on clusters of computed eigenvectors that contain some smooth
  and some singular eigenvectors would suffer the same drawback, and
  do so at greater computational cost.
}

The discussion above was intended to establish some
connections between the eigenvectors of $\cL$ and the landscape
function $u$, as a \textit{first} indication that adaptive refinement
that is aimed at approximating the landscape function might, in fact,
yield finite element spaces in which the eigenvectors are also well
approximated.  We now provide a first illustration of this approach
and its performance on a model problem for which there is no
eigenvector localization, but for which adaptive refinement is
advisable.  In Section~\ref{Experiments}, we provide strong supporting
evidence for the ``landscape refinement''
approach \change{on a variety} of problems.

\begin{example}[L-Shape Domain, First Look]\label{hAdaptLShape}
  We here consider the Laplacian $\cL=-\Delta$ on the domain
  $\Omega=(-1,1)\times(-1,1)\setminus[0,1)\times[0,1)$.  Most
  eigenvalues/vectors in this case are not known explicitly, though some highly
  accurate approximations of a few eigenvalues are given in the
  literature (cf.~\cite{Trefethen2006}).  It is known is that some
  eigenvectors have an $r^{2/3}$-singularity near the origin
  (cf.~\cite{Grisvard1985,Grisvard1992}).  This is true of the
  ground state eigenvector $\psi_1$, for instance.  So adaptive
  refinement is natural in this setting.

  We approximate the first five eigenpairs of $\cL$, all of which are
  simple, via an $h$-adaptive conforming finite element method on
  triangular meshes, with fixed local polynomial degree $p=2$.
  Without elaborating on the details of local error indicators and
  marking and refinement strategies (they are standard), we provide
  the final meshes generated by two different refinement strategies in
  Figure~\ref{fig:hAdaptLShapeMeshes}.  The first is driven by the
  computed eigenvectors, and the second is driven by the computed
  landscape function. Both start from the same quasi-uniform coarse
  mesh.  We use (CR) and (LR) respectively to denote
  ``cluster refinement'' and ``landscape refinement''.

  The reference values
  \begin{align*}
    &\lambda_1=9.639723844\;,\; \lambda_2=15.19725193\;,\;
      \lambda_3=19.73920880\\
    &\lambda_4=29.52148111\;,\;
    \lambda_5=31.91263596~,
  \end{align*}
  which were computed by the method of particular
  solutions~\cite{Betcke2005}, and are correct in all digits shown,
   were used to obtain the errors in our eigenvalue approximations.
  We note that $\lambda_3=2\pi^2$ and $\psi_3=\sin(\pi x)\sin(\pi y)$
  (up to scaling).
  The two final meshes are
  qualitatively very different, but the eigenvalue approximation
  errors are comparable; see Figure~\ref{hAdaptLShape}.   In both
  cases, the errors $e_j=|\hat\lambda_j-\lambda_j|$ in the
  approximations $\hat\lambda_j$ exhibit the optimal convergence rates,
  namely $\mathcal{O}(\mathrm{DOF}^{-2})$.
  The
  eigenvalue errors on the finest meshes are
\begin{center}
 \begin{tabular}{|ccccccc|}\hline
    &DOF&$e_1$&$e_2$&$e_3$&$e_4$&$e_5$\\\hline   
   CR&118240&1.38e-07&1.86e-07&3.64e-07&1.04e-06&1.37e-06\\
   LR&119586&1.81e-07&4.19e-07&1.32e-06&6.11e-06&4.32e-06\\\hline
  \end{tabular}  
\end{center}
Although CR wins in each head-to-head comparison of error, the errors
are comparable and decrease at the correct rate in both cases, and LR
is computationally cheaper, so LR might be considered a viable
alternative.  In later experiments, \change{in which a larger collection
of eigenvalues is desired}, we will see that LR becomes an even more
attractive alternative.

\begin{figure}
  \centering
   \includegraphics[width=0.45\textwidth]{./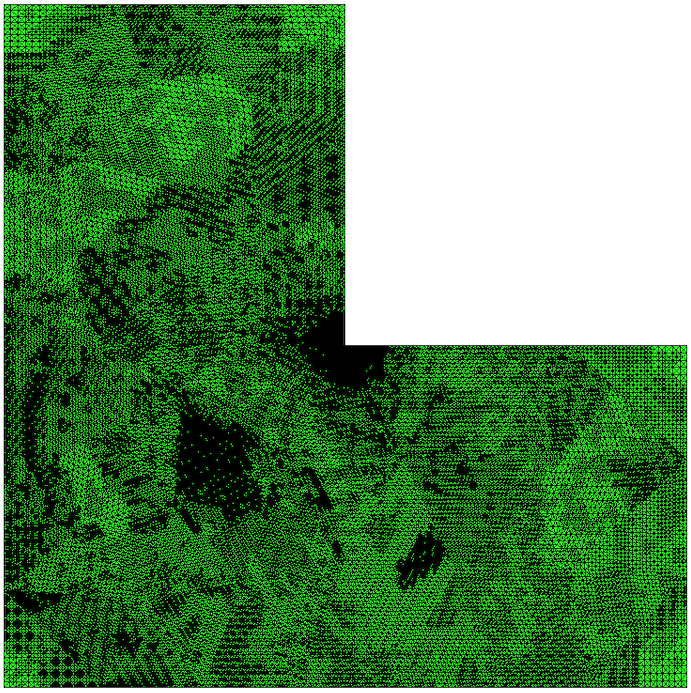}
  \includegraphics[width=0.45\textwidth]{./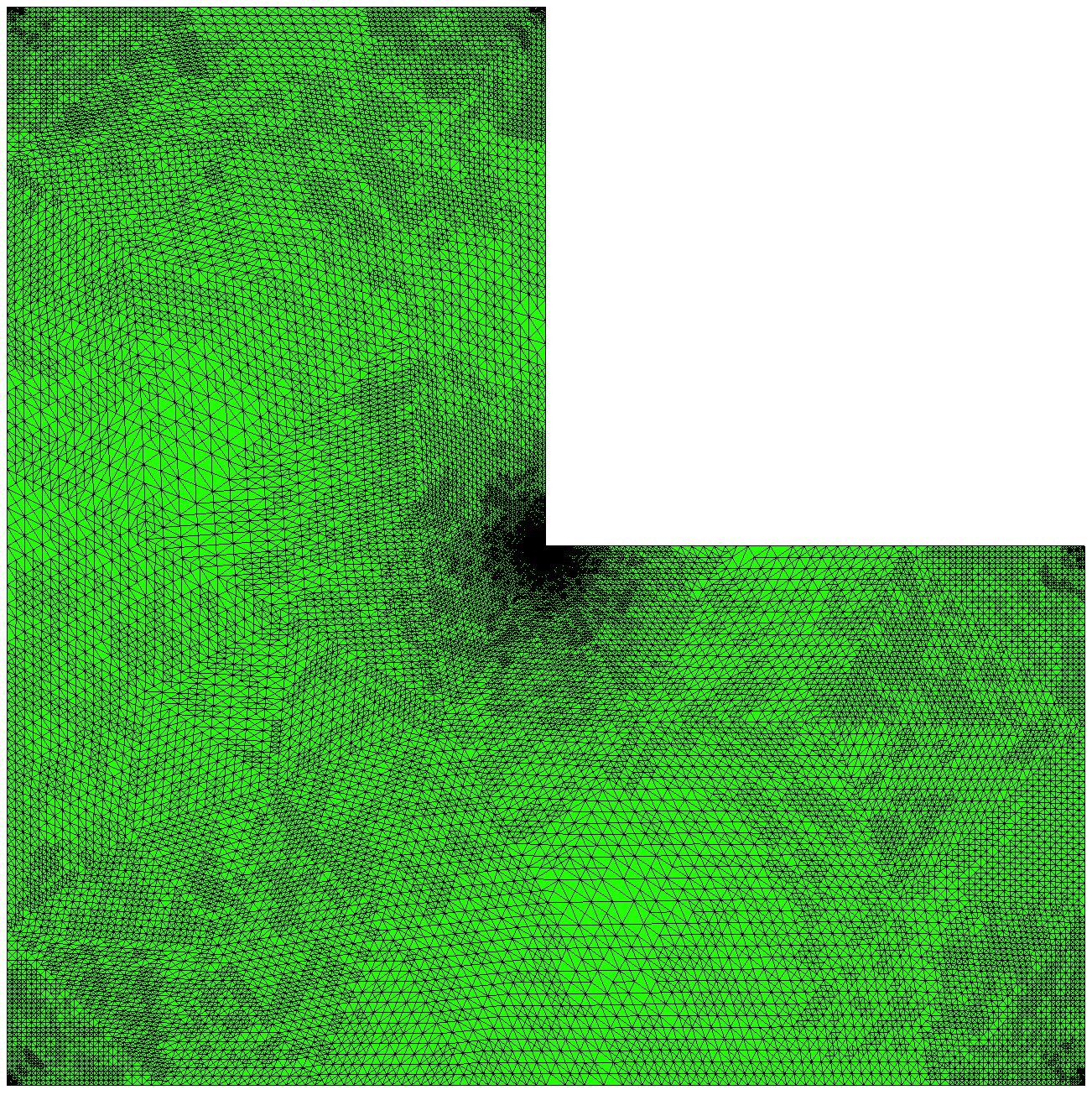}\\
  \includegraphics[width=0.45\textwidth]{./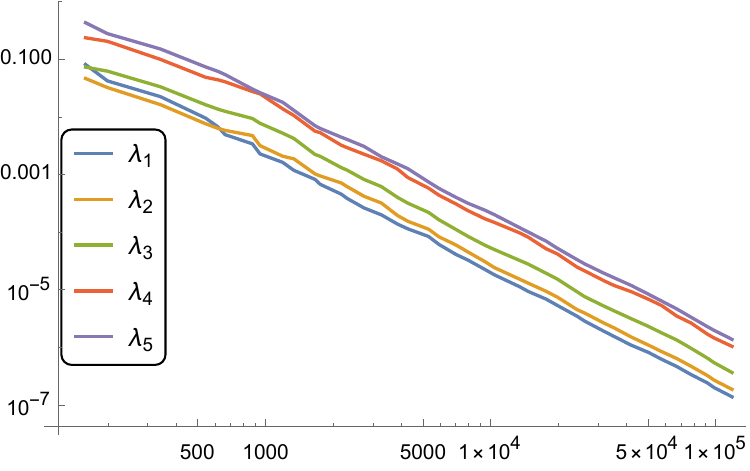}
  \includegraphics[width=0.45\textwidth]{./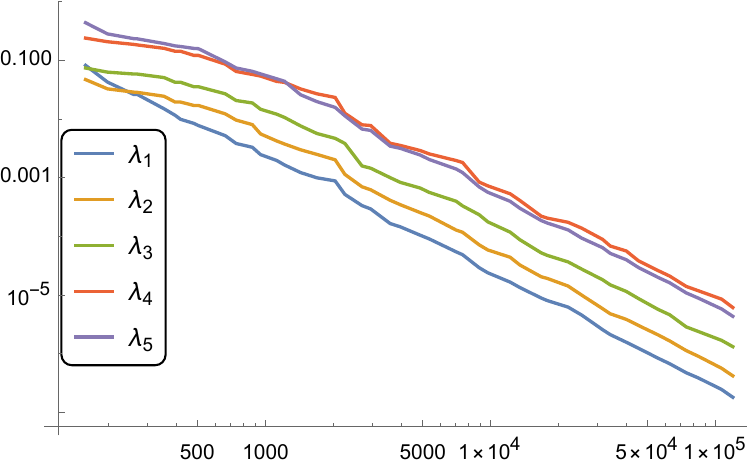} 
   \caption{Example~\ref{hAdaptLShape}. Top row: two meshes having roughly the same
    number of degrees of freedom, obtained by CR
    (left) and LR for the first five
    eigenvalues.  Bottom row: Error plots for CR (left) and LR.
    \label{fig:hAdaptLShapeMeshes}}
\end{figure}

We next consider a cluster of
eigenpairs a bit higher in the spectrum.  The interval
$[50\pi^2-10,\change{50\pi^2}+10]$ contains seven eigenvalues of $\cL$; four
are simple, but $\lambda=50\pi^2$ has a three-dimensional eigenspace
\begin{align*}
S=\mathrm{span}\{\sin(7\pi x)\sin(\pi y)\,,\, \sin(5\pi x)\sin(5\pi y) \,,\, \sin(\pi x)\sin(7\pi y)\}~.
\end{align*}
Reference values for these eigenvalues, obtained via the
method of particular solutions as before, are
\begin{align*}
  \lambda_{101}&=485.71752463708\;,\;
   \lambda_{102}=490.15998172598\;,\;
    \lambda_{103,104,105}=493.48022005447\\
    \lambda_{106}&=499.24106145290\;,\;
    \lambda_{107}=502.30119419396
\end{align*}

In this case, we consider three different refinement strategies:
landscape refinement (LR), eigenvector refinement (CR), and a mixed
strategy (MR) that begins with LR and then shifts to CR when the
dimension of the finite element space reaches some threshold.
The plots in Figure~\ref{fig:hAdaptLShapeHighSeven} contain the
computed eigenvalues in the interval $[50\pi^2-10, \change{50\pi^2}+10]$ for
both the LR and CR refinement strategies, starting from the same
coarse mesh as before.  Early in the adaptive cycle, neither approach
has produced a finite element space that is sufficient to resolve the
invariant subspace of interest---for the first several refinement
cycles, neither approach even \change{computes the correct number of
eigenvalues in the interval interest}!  In the case of CR, this means
that refinement during this phase is being at least partially driven
by approximations of that are, at best, related to eigenvectors of
$\cL$ for eigenvalues outside of $[50\pi^2-10, \change{50\pi^2}+10]$.  If CR is
generating useful meshes during this phase, it essentially doing so
``on accident''.  In fact, CR does seem to be generating useful meshes
\change{for approximating the invariant subspace} of interest, and it does so a
bit earlier than LR refinement, based on when we see a discernable
pattern of convergence toward the correct eigenvalues; for CR this
happens somewhere around 3000 or 4000 DOF, and for LR it is nearer to
8000 DOF.

\begin{figure}
  \centering
  \includegraphics[width=0.49\textwidth]{./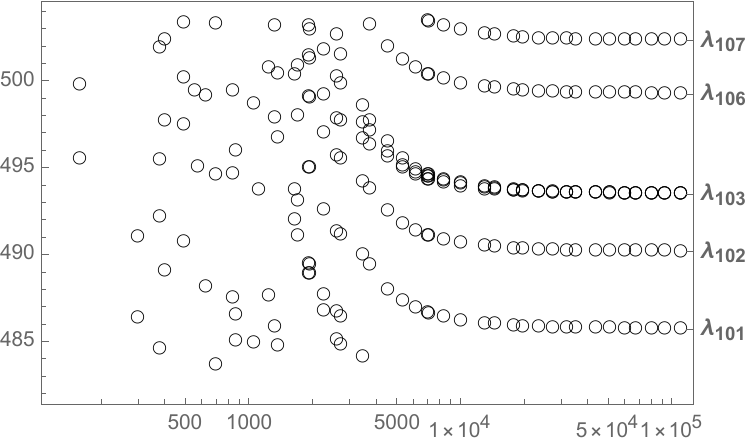}
  \includegraphics[width=0.49\textwidth]{./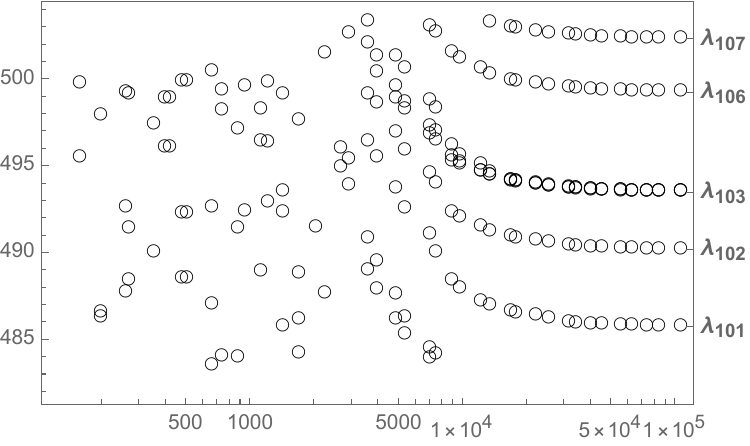}
  \caption{Example~\ref{hAdaptLShape}. Computed approximations of the
    (seven) 
    eigenvalues in $[50\pi^2-10,50\pi^2+10]$ over a sequence of
    adaptively refined meshes using CR (left) and LR.  Note that $\lambda_{103}=\lambda_{104}=\lambda_{105}=50\pi^2$.
    \label{fig:hAdaptLShapeHighSeven}}
\end{figure}

Table~\ref{FinalErrorsThreeRefinementStrategies} below shows the
error, $e_j=|\lambda_{j}-\hat\lambda_{j}|$, in the seven computed
eigenvalues $\hat\lambda_j$ for the first mesh exceeding 100,000 DOF
for each of the three refinement strategies.  For MR, we show the
results of three variants: MR1 switches from LR to CR as soon as
DOF $> 4000$; MR2 switches when DOF $ > 8000$; MR3 switches when
DOF $ > 10000$.  We first note that, although LR yields the largest
final errors among the five options all of these approaches exhibit
convergence consistent with the optimal rate $\mathcal{O}(\mathrm{DOF}^{-2})$,
despite the fact that the meshes used here for LR are precisely those
used for the smallest five eigenvalues.  We also remark that, although
Figure~\ref{fig:hAdaptLShapeHighSeven} indicates that CR begins to
accurately resolve the eigenvalues a bit sooner than LR, the mixed
approach MR3 that transitions from LR to CR latest actually performs
best.

\begin{table}
  \caption{\label{FinalErrorsThreeRefinementStrategies}
    Example~\ref{hAdaptLShape}.  Errors $e_j=|\lambda_{j}-\hat\lambda_{j}|$ on the finest mesh for
    approximating the seven eigenvalues in $[50\pi^2-10, \change{50\pi^2}+10]$
    using several adaptive refinement strategies.}
\begin{small}
\begin{center}
 \begin{tabular}{|ccccccccc|}\hline
   &DOF&$e_{101}$&$e_{102}$&$e_{103}$&$e_{104}$&
               $e_{105}$&$e_{106}$&$e_{107}$\\\hline   
   CR    &110123&4.30e-03&4.29e-03&4.02e-03&4.51e-03&5.11e-03&5.27e-03&5.18e-03\\
   LR    &106638&2.57e-02&2.40e-02&2.36e-02&2.42e-02&2.70e-02&2.61e-02&2.30e-02\\
   MR1&104095&4.81e-03&4.70e-03&4.37e-03&4.99e-03&5.67e-03&5.76e-03&5.77e-03\\
   MR2&115258&3.81e-03&3.77e-03&3.57e-03&4.03e-03&4.67e-03&4.67e-03&4.76e-03\\   
   MR3&104432&1.96e-03&2.07e-03&1.92e-03&2.06e-03&3.12e-03&2.42e-03&2.59e-03\\
   \hline
  \end{tabular}  
\end{center}
\end{small}
\end{table}
\end{example}

\section{Discontinuous Galerkin discretization}
\label{sec:DGmethods}

We will be using the $hp$-version SIPG finite element
method~\cite{arnold_unified_2002,giani_posteriori_2012,giani_posteriori_2018}
for the discretization of \eqref{Problem} in
Section~\ref{Experiments}, so we describe it in this section, together
with error estimators and the local error indicators that will be used
to drive our adaptive algorithms.
Throughout, we assume that the computational domain~$\Omega$ can be
partitioned into a shape-regular mesh $ \mathcal{T}$, and
that the mesh elements are affine quadrilaterals.  The diameter of an
element $K$ is denoted by $h_K$.  In order to be able to deal with
irregular meshes we need to define the edges of a mesh ${\mathcal
  T}$. We refer to~$e$ as an interior mesh edge of~${\mathcal T}$ if
$e=\partial K\cap\partial K^\prime$ for two neighboring elements
$K, K^\prime\in{\mathcal T}$ whose intersection has a positive surface
measure. The set of all interior mesh edges is denoted by
$\mathcal{E}_I(\mathcal{T})$. Analogously, if the intersection
$e=\partial K \cap \partial\Omega$ of the boundary of an
element~$K\in{\mathcal T}$, we refer to~$e$ as a boundary mesh edge of
${\mathcal T}$. The set of all boundary mesh edges of~${\mathcal T}$
is denoted by~$\mathcal{E}_\Gamma(\mathcal{T})$ and we set
$\mathcal{E}(\mathcal{T})=\mathcal{E}_I(\mathcal{T})\cup
\mathcal{E}_{\Gamma}(\mathcal{T})$. The diameter of an edge $e$ is
denoted by~$h_e$.  We allow for 1-irregularly refined meshes
$\mathcal{T}$, i.e. we allow for at most one hanging node per edge.
We also assume that the discontinuities of $A$ and $V$ are aligned
with the edges of the elements.

Next, let us define the jumps and averages of piecewise smooth
functions across edges of the mesh ${\mathcal T}$. To that end, let
the interior edge~$e\in{\mathcal E}_I({\mathcal T})$ be shared by two
neighboring elements $K_1$ and $K_2$. For a piecewise smooth
function~$v$, we denote by $v|_{1,e}$ the trace on $e$ taken from
inside $K_1$, and by $v|_{2,e}$ the one taken from inside $K_2$. The
jump of $v$ across the edge~$e$ is then defined as
\begin{equation*}
   \jmp{v}=v|_{1,e}\,\,
  \underline{n}_{K_1} + v|_{2,e}\,\, \underline{n}_{K_2}.
\end{equation*}
Here, $\underline{n}_{K_1}$ and $\underline{n}_{K_2}$ denote the unit outward normal
vectors on the boundary of elements $K_1$ and $K_2$, respectively.
The average
across~$e$ of $A\nabla v$ is defined as in \cite{ern_discontinuous_2009} for the scalar case:
\begin{equation*}
  \mvl{A\nabla v}=\omega_2(A\nabla v)|_{2,e}+\omega_1(A\nabla v)|_{1,e},
\end{equation*}
where
$$
\omega_2 =
\frac{\underline{n}_{K_2}^tA|_{1,e}\underline{n}_{K_2}}{\underline{n}_{K_2}^tA|_{2,e}\underline{n}_{K_2}+\underline{n}_{K_2}^tA|_{1,e}\underline{n}_{K_2}}\
, \quad \omega_1 =
\frac{\underline{n}_{K_1}^tA|_{2,e}\underline{n}_{K_1}}{\underline{n}_{K_1}^tA|_{2,e}\underline{n}_{K_1}+\underline{n}_{K_1}^tA|_{1,e}\underline{n}_{K_1}}\
.
$$
On a boundary edge $e\in{\mathcal E}_\Gamma({\mathcal T})$, we
accordingly set $\mvl{A\nabla v}= A\nabla v$ and
$\jmp{v}=v\underline{n}$, with $\underline{n}$ denoting the unit
outward normal vector on~$\Gamma$.


On each element $K$, we construct a polynomial space of order $p$
like:
\begin{equation}
  \label{eq:Qp} \mathcal{Q}_{p}(K)= \{\,v\, :\, K\to \mathbb{R} \,:\,
  v\circ T_K \in \mathcal{Q}_{p}(\widehat{K})\,\},
\end{equation}
with $\mathcal{Q}_{p}(\widehat{K})$ denoting the set of tensor product
polynomials on the reference element $\widehat{K}$ of degree less than
or equal to $p$ in each coordinate direction on $\widehat{K}$.  In the
$hp$- setting, elements in the same mesh might be associated to
polynomial spaces of different orders.  We assume the order of
polynomials to be of bounded local variation, that is, there is a
constant $\varrho\geq 1$, independent of the mesh $\mathcal{T}$, such
that
\begin{equation}
  \label{eq:bounded-variation-p} \varrho^{-1}\leq p_K/p_{K^\prime}\leq
  \varrho
\end{equation}
where $p_K$ and $p_{K^\prime}$ are the orders of the elements $K$ and
$K^\prime$, for any pair of neighboring elements
$K, K^\prime\in \mathcal{T}$. For a mesh edge
$F\in\mathcal{E}(\mathcal{T})$, we introduce the edge polynomial
degree $p_e$ by
\begin{equation}
  \label{eq:face-polynomial}
  \begin{aligned}
    p_e&=\begin{cases}\ \max\{ p_{K_1}, p_{K_2}\},& \qquad \text{if
      }e=\partial K_1 \cap \partial K_2\in{\mathcal E}_I({\mathcal
        T}),\\[0.1cm]
      \ p_K, &\qquad \text{if } e=\partial K\cap \partial\Omega\in
      \mathcal{E}_B(\mathcal{T}).
    \end{cases}
  \end{aligned}
\end{equation}
We also denote by $h_e$ the length of the edge $e$.
For a partition $\mathcal{T}$ of $\Omega$, we define the $hp$-version
DG finite element space by
\begin{equation}
  \label{eq:hp-space} S(\mathcal{T}) =\{\,v\in
  L^2(\Omega) \, : \, v|_K \in \mathcal{Q}_{p_K}(K), \ K\in{\mathcal
    T}\,\}.
\end{equation}


The standard $L^2$ norm on the domain $\Omega$ is denoted by
$\|\cdot\|_{0,\Omega}$ and when restricted to an element or edge is
denoted by $\|\cdot\|_{0,K}$ and $\|\cdot\|_{0,e}$, respectively.
We shall also need the following DG norm:
\begin{definition}[DG norm]\label{def:dg_norm}
For any $u\in S(\mathcal{T})$
\begin{equation}
  \label{eq:energynorm} \enorm{u}{S(\mathcal{T})}^2 =\sum_{K\in\mathcal
    T}\|A\nabla u-Vu\|^2_{0,K}+\sum_{e\in\mathcal{E}({\mathcal
      T})}\frac{\gamma  cp_e^2}{h_e} \|\jmp{u}\|^2_{0,e},
\end{equation}
where 
$$
c =
2\frac{\underline{n}_{K_1}^tA|_{1,e}A|_{2,e}\underline{n}_{K_1}}{\underline{n}_{K_1}^tA|_{1,e}\underline{n}_{K_1}+\underline{n}_{K_1}^tA|_{2,e}\underline{n}_{K_1}},
$$
with $A|_{1,e}$ and $A|_{2,e}$ the values of $A$ in the two elements
$K_1$ and $K_2$ sharing $e$. If $e$ is on the boundary,
$c=\underline{n}^tA\underline{n}$.  The penalty parameter $\gamma>0$
appears in the SIPG discretization described below.
\end{definition}


The SIPG discrete version of the eigenvalue problem~\eqref{Problem}
is: find $(\lambda_{h},\phi_{h})\in \mathbb{R}\times S( \mathcal{T})$
such that
\begin{equation}
  \label{eq:dgfem_eig} a(\phi_{h}, v_{h})=\lambda_{h}\ b(\phi_{h},v_{h})\qquad
  \forall\, v_{h}\in S( \mathcal{T}),
\end{equation}
where $b(\cdot,\cdot)$ is the $L^2$ inner product and with
$\|\phi_{h}\|_{0,\Omega}=1$.  The bilinear form $a(u,v)$ is given by
\begin{equation}
\label{eq:def_a}
  \begin{split}
    a(u, v)&=\sum_{K\in{\mathcal T}}\int_K\,A\nabla u\cdot \nabla
    v+Vuv\,dx\\[1mm]
    &-\sum_{e\in\mathcal{E}({\mathcal
        T})}\int_e\,\Big(\mvl{A\nabla u}\cdot\jmp{v}+\mvl{A\nabla
      v}\cdot\jmp{u}\Big)\,ds\\[0.1cm]&+\sum_{e\in{\mathcal
        E}(\mathcal{T})} \frac{\gamma c p_e^2}{h_e} \int_e\,
    \jmp{u}\cdot\jmp{v}\,ds,
  \end{split}
\end{equation}
where the gradient operator $\nabla$ is defined elementwise and the
parameter $\gamma>0$ is the interior penalty parameter. For source
problems the SIPG is known to be a stable and consistent method for
sufficiently large values of $\gamma$
\cite{arnold_unified_2002,prudhomme_review_2000}.

The SIPG discrete version of the landscape source problem~\eqref{Landscape} is: find
$u_h\in \mathbb{R}\times S( \mathcal{T})$ such that
\begin{equation}
  \label{eq:dgfem} a(u_{h}, v_{h})=b(1,v_{h})\qquad
  \forall\, v_{h}\in S( \mathcal{T}).
\end{equation}

Two residual-based error estimators are used in the simulations, one
for the landscape solution $u$ \eqref{Landscape} which is based on the
error estimator for linear problems, and one for the eigenpairs of the
eigenvalue problem \eqref{Problem}. An a posteriori error estimator is
a necessary ingredient for adapting the mesh and the finite element
space because it is capable of estimating the error in each element of
the computed solution using only the computed solution and given data for
the problem.

The error estimator for the landscape function $\eta_{\mathrm{land}}$
is based on \cite{houston_energy_2007,giani_reliable_2018} and is
computed using an approximation $u_h$ of $u$ computed on
$S(\mathcal{T})$:
\begin{equation}\label{Landscape_err_est}
  \eta_{\mathrm{land}}^2 := \sum_{K \in \mathcal{T} }
  \eta_{\mathrm{land},K}^2\; ,\;
  \eta_{\mathrm{land},K}^2:= \eta_{R_{\mathrm{land},K}}^2(u_h)+\eta_{F_K}^2(u_h)+\eta_{J_K}^2(u_h)
\end{equation}
where
\begin{align*}
  \eta_{R_{\mathrm{land},K}}^2(u_h)\ :=&\ A_{\mathrm{min},
  K}^{-1}p_K^{-2}h_K^2\|1+\nabla\cdot(A\nabla u_h)-V u_h\|^2_{0,K}\
                                         ,\\
 \eta_{F_K}^2(u_h) :=&
\frac{1}{2}\sum_{e\in\mathcal{E}_{I}(\mathcal{T})} A_{\mathrm{min},
  e}^{-1}\,p_e^{-1}h_e\|\jmp{A\nabla u_h}\|^2_{0,e}\ ,\\ 
  \eta_{J_K}^2(u_h)\ := &\frac{1}{2}\sum_{e\in\mathcal{E}_{I}(\mathcal{T})}
                           \Bigg(\frac{h_e}{A_{\mathrm{min}, e}\,p_e}+\frac{\gamma^2 A_{\mathrm{max}, e}\,p_e^3}{h_e}\Bigg)\|\jmp{u_h}\|^2_{0,e}\\
                         &+\sum_{e\in\mathcal{E}_{\Gamma}(\mathcal{T})}
                           \Bigg(\frac{h_e}{A_{\mathrm{min}, e}\,p_e}+\frac{\gamma^2 A_{\mathrm{max}, e}\,p_e^3}{h_e}\Bigg)\|u_h\|^2_{0,e}\ ,
\end{align*}
where $A_{\mathrm{min}, e}$ and $A_{\mathrm{max}, e}$ are respectively
the minimum and the maximum eigenvalue of $A$ among the elements
sharing the edge $e$ and where $A_{\mathrm{min}, K}$ is the minimum
eigenvalue of $A$ in $K$.  When $A\equiv 1$ everywhere, the term
$\frac{h_e}{p_e}\lesssim \frac{p_e^3}{h_e}$ in
$\eta_{J_K}^2(u_h)$. Therefore, the term $\eta_{J_K}^2(u_h)$ reduces
to:
\begin{align*}
\eta_{J_K}^2(u_h)\ :=\ &\frac{1}{2}\sum_{e\in\mathcal{E}_{I}(\mathcal{T})}
\frac{\gamma^2 p_e^3}{h_e}\|\jmp{u_h}\|^2_{0,e}
+\sum_{e\in\mathcal{E}_{\Gamma}(\mathcal{T})}
\frac{\gamma^2 p_e^3}{h_e}\|u_h\|^2_{0,e}\ .
\end{align*}

From \cite{houston_energy_2007} we have that $\eta_{\mathrm{land}}$ is
reliable:
\begin{align}\label{reliability_land}
\|u-u_h\|_{S(\mathcal{T})} \lesssim \eta_{\mathrm{land}}\ .
\end{align}

The error estimator $\eta_{\mathrm{eig},j}$ for the eigenpairs of the
eigenvalue problem \eqref{Problem} is based on
\cite{Giani2016a,giani_posteriori_2012} and computed for each
eigenpair $(\lambda_j,\phi_j)$ using an approximation
$(\lambda_{j,h},\phi_{j,h})$ of the eigenpair computed on
$S(\mathcal{T})$:
\begin{equation}\label{Landscape_err_est_eig}
\eta_{\mathrm{eig},j}^2 := \sum_{K \in \mathcal{T} }
\eta_{\mathrm{eig},K,j}^2\; ,\;
\eta_{\mathrm{eig},K,j}^2  = \eta_{R_{\mathrm{eig},K}}^2(\lambda_{j,h},\phi_{j,h})+\eta_{F_K}^2(\phi_{j,h})+\eta_{J_K}^2(\phi_{j,h})
\end{equation}
where
$$
\eta_{R_{\mathrm{eig},K}}^2(\lambda_{j,h},\phi_{j,h})\ :=\
A_{\mathrm{min},
  K}^{-1}p_K^{-2}h_K^2\|\lambda_{j,h}\phi_{j,h}+\nabla\cdot(A\nabla
\phi_{j,h})-V \phi_{j,h}\|^2_{0,K}\ .
$$
The ``jump terms'' $\eta_{F_K}$ and $\eta_{J_K}$ here are defined in
the same way they were for the landscape function.
From \cite{giani_posteriori_2012} we have that $\eta_{\mathrm{eig},j}$
is reliable for both eigenvalues and eigenvectors:
\begin{align}\label{reliability_eig}
 |\lambda_j-\lambda_{j,h}|\lesssim \eta_{\mathrm{eig},j}^2 + \mathrm{h.o.t.}\ ,\\
 \mathrm{dist}(\phi_{j,h},E_1(\lambda_j))\lesssim \eta_{\mathrm{eig},j} + \mathrm{h.o.t.}\ ,
\end{align}
where $\mathrm{h.o.t.}$ are asymptotically higher-order terms that are
not numerically computed or approximated and where $E_1(\lambda_j)$ is
the span of all eigenfunctions of the eigenvalue $\lambda_j$ normalized
in the $L^2$ norm and where $\mathrm{dist}(\phi_{j,h},E_1(\lambda_j))$
is the distance in the DG norm of the computed eigenvector
$\phi_{j,h}$ from the eigenspace $E_1(\lambda_j)$.

\section{Numerical Results}\label{Experiments}

We will compare landscape refinement with versions of
eigenvector-based refinement on several examples having different features.
We first \change{define all the quantities} used in
presenting the results that are collected, starting from the quantities
related to the eigenvalue problem:
\begin{itemize}
\item \textbf{Error bound for single eigenpair:} for the computed
  eigenpair $(\lambda_{j,h},\phi_{j,h})$ of index $j$, the error bound
  is $\eta_{\mathrm{eig},j}^2$.
\item \textbf{Relative error bound for single eigenpair:} for the
  computed eigenpair $(\lambda_{j,h},\phi_{j,h})$ of index $j$, the
  error bound is $\eta_{\mathrm{eig},j}^2/\lambda_{j,h}$.
\item \textbf{Envelope for the true relative error for eigenvalues:}
  This quantity is only computable if the true values of the
  eigenvalues are available. For an approximation of the lowest $M$
  eigenpairs in the spectrum, it is defined as
  $e_\mathrm{max}^\mathrm{rel}= \max_{j\leq
    M}(|\lambda_j-\lambda_{j,h}|/\lambda_j)$.
\item \textbf{Envelope for the relative error bound for eigenvalues:}
  for an approximation of the lowest $M$ eigenpairs in the spectrum,
  it is defined as
  $\eta_{\mathrm{max},\mathrm{rel}}^2= \max_{j\leq
    M}(\eta_{\mathrm{eig},j}^2/\lambda_{j,h})$.
\end{itemize}
The only quantity for the landscape problem is the \textbf{error bound
  for the landscape solution} defined as $\eta_{\mathrm{land}}$.

The last quantity to introduce is CPU time which is the sum of the
time spent by all processing units on a task. For example, if two
cores of a modern CPU are working at the same time on a task for 1
minute then the CPU time for the task is 2 minutes. CPU time is a
better measure of the computational complexity of a task than real
time because it is independent of the number of cores used. Using CPU
time different codes with different levels of parallelization can be
compared in a meaningful way.

Although we will primarily compare landscape refinement with
cluster refinement, there will be a few cases in which we also
consider refinement driven by a single eigenpair.  We will refer to
this as eigenpair refinement.  In order to present the experiments
sooner, details of the particular algorithms
will be provided in Section~\ref{sec:alg}.  Here, we merely identify
the highest-level algorithms with the descriptors above:
\begin{enumerate}
\item Eigenpair refinement (ER) for a single eigenpair, Algorithm 1
\item Cluster refinement (CR) based on the first $M$ eigenpairs,
  Algorithm 2
\item Landscape refinement (LR), Algorithm 3 
\end{enumerate}
Algorithms 2 and 3 use the largest of the eigenvalue error
estimates for the cluster to determine the stopping criterion.  The
key difference between
Algorithms 2 and 3 is the strategy for marking elements and deciding
how to refine them, with Algorithm 3 using the landscape function for
these decisions and Algorithm 2 using some collective information from
the cluster.

\begin{example}[Laplace operator on the unit square]\label{UnitSquare}
We consider the operator $\cL w=-\Delta w $ on $\Omega=(0,1)^2$, for
which the pairs are well-known, $\lambda_{mn}=(m\pi)^2+(n\pi)^2$ and
$\phi_{mn}=2\sin(m\pi x)\sin(n\pi y)$ for $m,n\in\NN$.  The
eigenvectors are analytic and do not localize anywhere, but we will
see that landscape refinement is effective even in such simple situations.

In Figure~\ref{fig:square_envelopes}, we show the collective
eigenvalue error and error estimate, $e_\mathrm{max}^\mathrm{rel}$ and
$\eta_{\mathrm{max},\mathrm{rel}}^2$, for the first $M=100$
eigenpairs, under landscape refinement starting from an initial mesh
of $8\times 8$ elements and initial polynomial order $p=2$ in each element.  This figure
also includes relative error bounds for some individual eigenvalues in
the cluster.  For comparison, the error bound for the landscape
solution $\eta_{\mathrm{land}}^2$ is also shown.
The reliability of the underlying eigenvector error estimator \eqref{reliability_eig} is
reflected in the very similar convergence patters of 
 $e_\mathrm{max}^\mathrm{rel}$ and $\eta_{\mathrm{max},\mathrm{rel}}^2$.  The square of the error bound for the
landscape solution $\eta_{\mathrm{land}}^2$ converges faster and this
is because the curves for envelopes are dominated by the error from
the eigenpairs in the higher part of the computed spectrum. Such
eigenpairs have high frequency and therefore they are harder to
approximate in FEMs. On the other hand, the landscape solution has a
very low frequency so it converges faster. It is remarkable that
adapting for the landscape solution, which has low frequency, causes
high frequency eigenpairs to converge.
\begin{figure}
\centering
\includegraphics[width=0.48\textwidth]{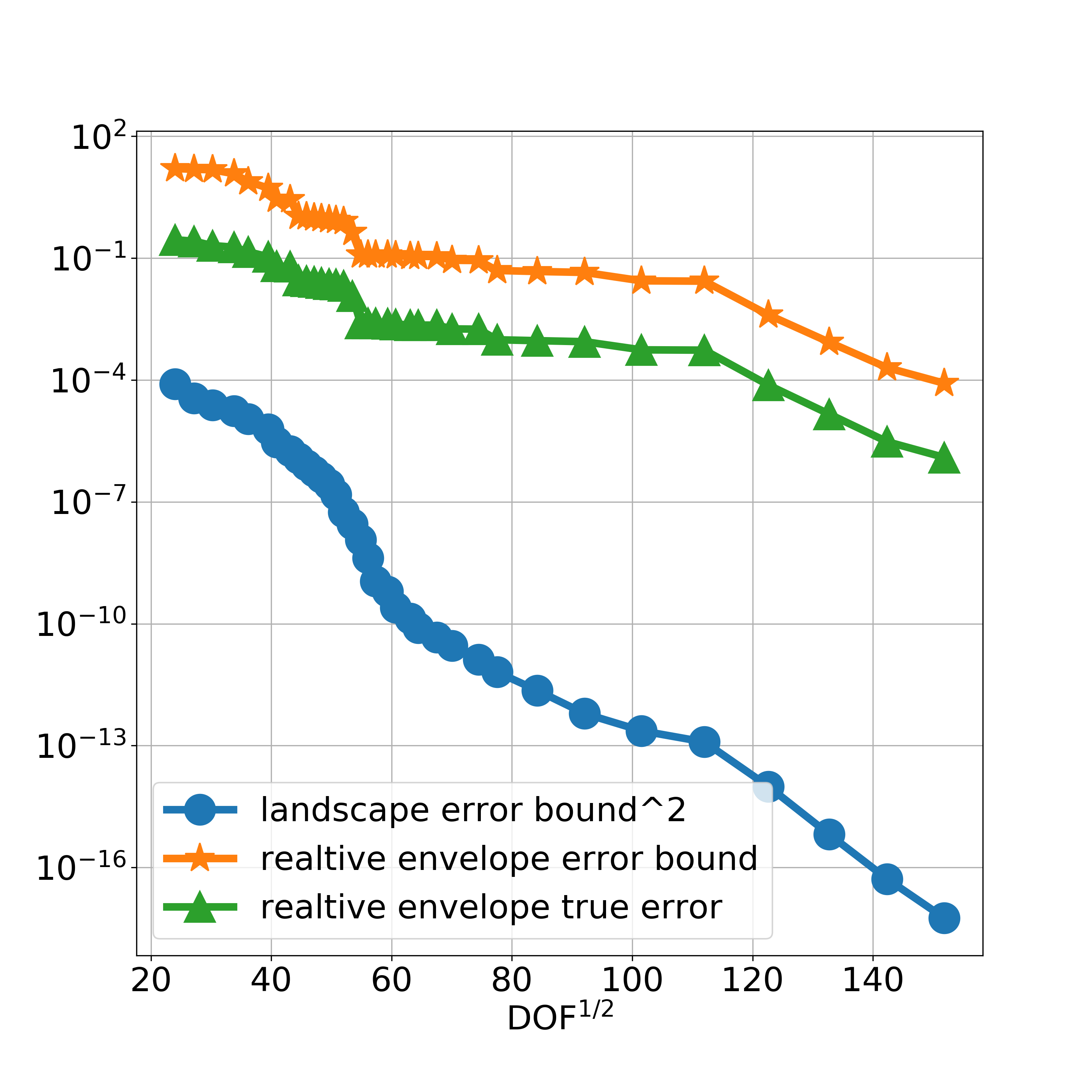}
\includegraphics[width=0.48\textwidth]{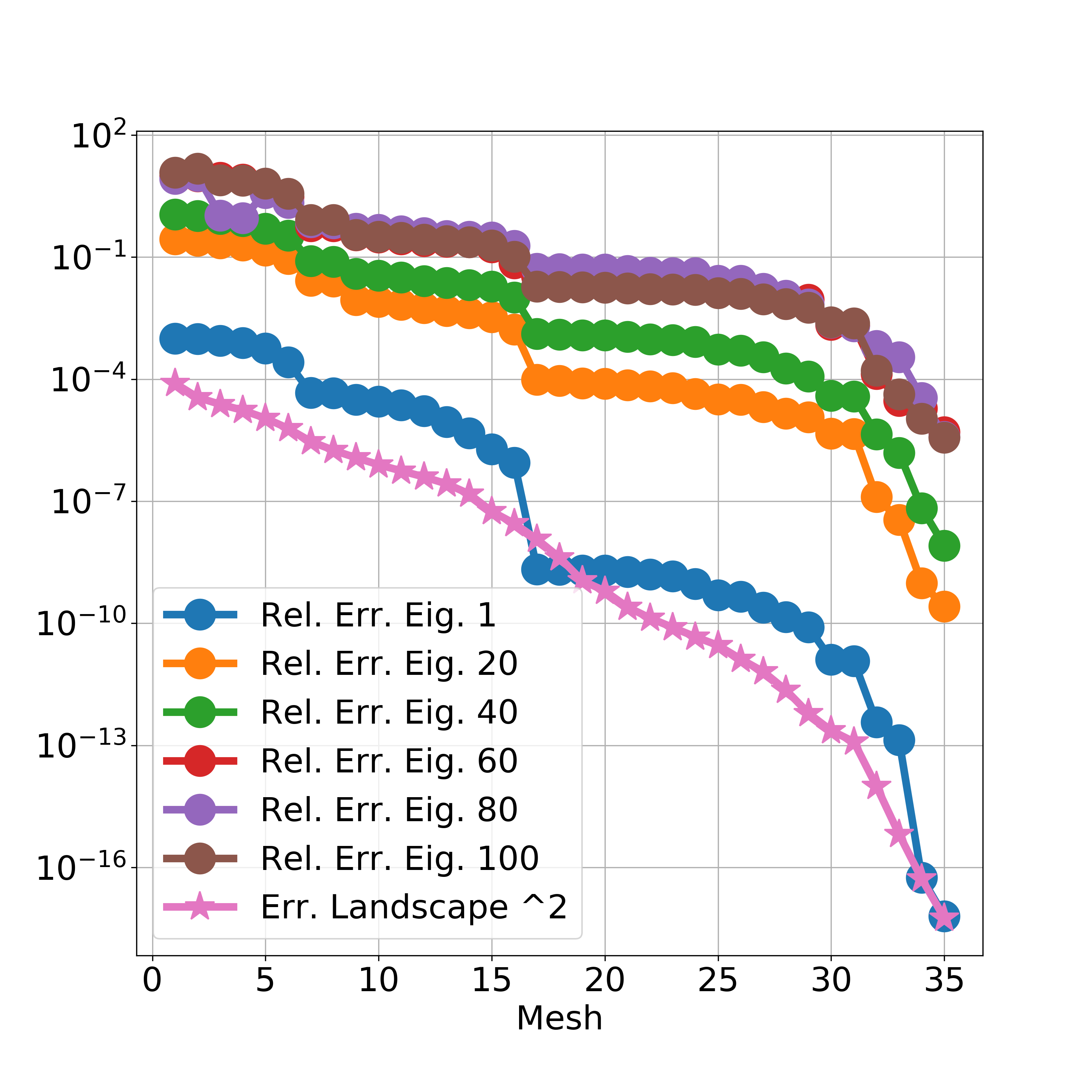}
\caption{Example~\ref{UnitSquare} Convergence plots of collective and
  individual eigenvalue errors under landscape refinement.  Left: 
  The square of the error bound for the landscape solution $\eta_{\mathrm{land}}^2$ and the envelopes for
  the relative error bound $\eta_{\mathrm{max},\mathrm{rel}}^2$ and
  the true error for the computed spectrum
  $e_\mathrm{max}^\mathrm{rel}$. Right: Relative errors for some individual eigenvalues.}\label{fig:square_envelopes}
\end{figure}

Figure~\ref{fig:square_compare} contains convergence comparisons between landscape
refinement, eigenvector refinement based on single eigenpairs, and two
variants of cluster refinement for the first $M=100$ eigenpairs.  The
two variants involve how local error and regularity indicators for
each eigenpair in the cluster are used for purposes of adaptivity.
The two approaches,  AdaptSpectrumEigSum and AdaptSpectrumEigMax, are
given in detail in Section~\ref{sec:alg} as Algorithm 7 and Algorithm
8, respectively, but the names themselves are suggestive of how they
operate.

The first picture in Figure~\ref{fig:square_compare} demonstrates that
landscape refinement can beat, or at least be competitive with,
eigenvector refinement for driving down the error in a single
eigenvalue.  We run eigenvector refinement independently three times,
targeting the eigenpairs of indices 1, 48 and 90.  These convergence
histories are plotted together with the convergence for these same
eigenvalues computed on meshes determined by landscape refinement.
For the first eigenvalue, eigenpair refinement leads to faster
convergence than landscape refinement, but both are extremely fast.
What is more interesting is that landscape refinement outperforms
eigenpair refinement for the other eigenvalues.  More specifically,
after an initial phase in which both approaches yield similar
convergence behavior, the landscape approach yields two significant
drops in estimated eigenvalue error between $40^2$ and $60^2$ DOFs
that mimic similar drops in the estimated (square) landscape error.
It may seem counterintuitive that landscape refinement can beat
eigenpair refinement for single eigenpairs, but perhaps it is not so
surprising in this case because, on coarse meshes, the approximate
eigenvectors having index 48 and 90 are not approximating the actual
48th and 90th eigenvectors well (if at all), so the refinement
dictated at those levels is based on poor information.  In contrast,
the landscape function is well-resolved even on coarse meshes, and
provides accurate information early on concerning where and how to
refine the mesh in a way that is useful even for the higher-frequency
eigenvectors (though it is a low-frequency function).
The second picture in Figure~\ref{fig:square_compare} shows that
landscape refinement convincingly beats both variants of cluster
refinement.  Based on what we observed in the single eigenvector
refinement cases, this is not a surprise, though it merits reporting.

\begin{figure}
\centering
\includegraphics[width=0.48\textwidth]{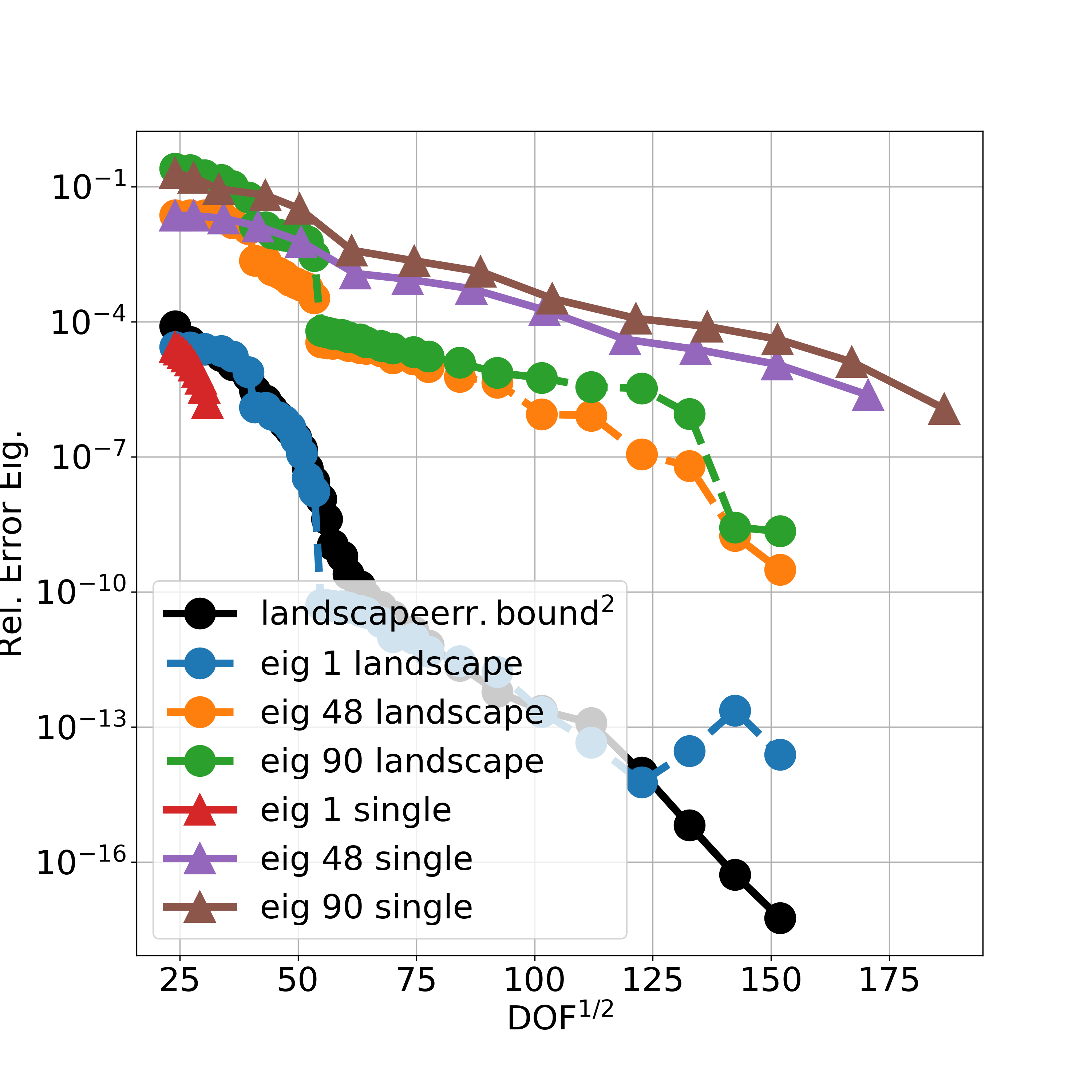}
\includegraphics[width=0.48\textwidth]{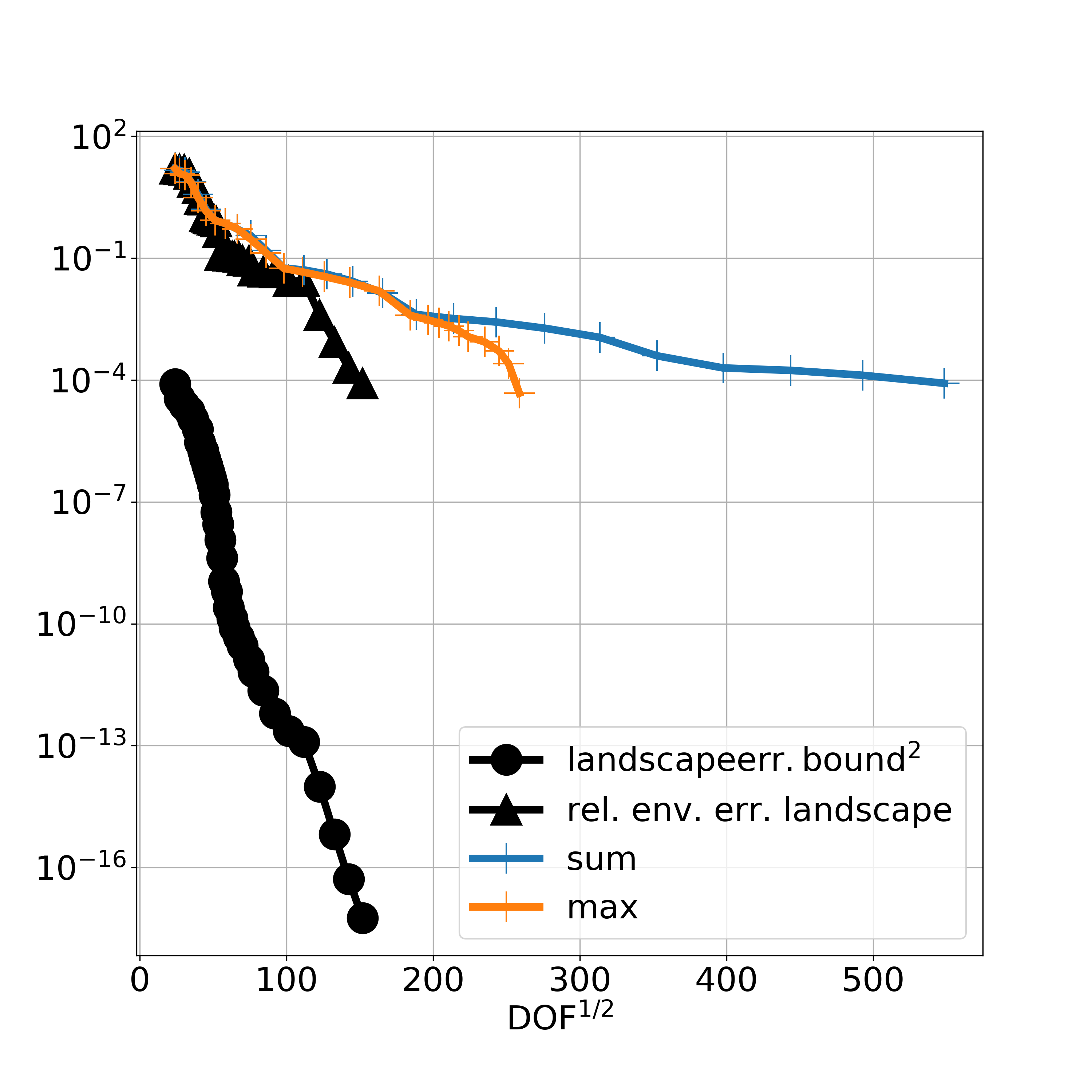}
\caption{Example~\ref{UnitSquare}. Left: Convergence plots of the relative
  eigenvalue errors $\eta_{\mathrm{eig},j}^2/\lambda_{j,h}$ under landscape refinement and single eigenpair
  refinement for $j=1,48,90$.  Right: Convergence plots of the
  relative error envelope bound $\eta_{\mathrm{max},\mathrm{rel}}^2$
  under landscape refinement \change{and two variants} of cluster refinement.
  Also included is the error bound for the landscape function
$\eta_{\mathrm{land}}^2$ under landscape refinement. }\label{fig:square_compare}
\end{figure}
\end{example}


\begin{example}[Laplace operator on the L-shape domain,
  revisited]\label{LShapeRevisited} We
  briefly revisit Example~\ref{hAdaptLShape}, but with $hp$ adaptivity
  using landscape refinement and cluster refinement for the first
  $M=100$ eigenpairs.  As before, we start with \change{an initial mesh} of
  $8\times 8$ elements of order $p=2$.
The convergence results for landscape refinement and cluster
refinement using AdaptSpectrumEigSum are shown in  
Figure~\ref{fig:lshape_spectrum_compare}, for the
envelope for the relative error bound
$\eta_{\mathrm{max},\mathrm{rel}}^2$.  The final meshes in each case
are given in Figure~\ref{fig:lshape_mesh}.  We see that landscape
refinement, though initially providing slightly slower convergence,
overtakes cluster refinement at around $140^2$ DOFs, and reaches an
error tolerance of $10^{-4}$ for the cluster much(!) sooner.  The
final mesh in the case of landscape refinement is also more appealing,
with $h$-refinement targeted near the corners of the domain, whereas
the final mesh for cluster refinement is far less coherent.
\begin{figure}
\centering
\includegraphics[width=0.8\textwidth]{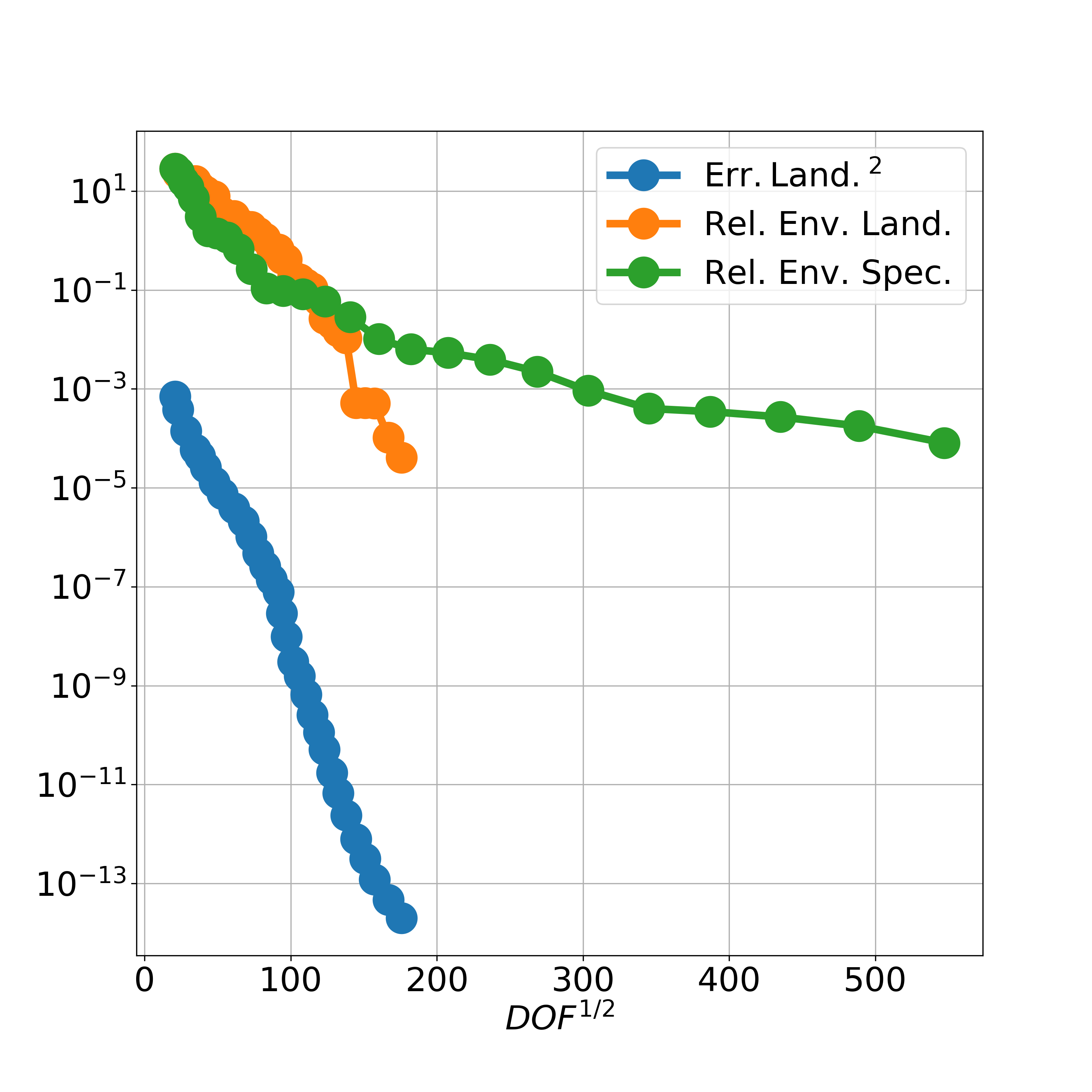}
\caption{\label{fig:lshape_spectrum_compare} Example~\ref{LShapeRevisited}. Convergence plots of the
  relative error envelope bound $\eta_{\mathrm{max},\mathrm{rel}}^2$
  under landscape refinement \change{and two variants} of cluster refinement.
   Also included is the error bound for the landscape function
$\eta_{\mathrm{land}}^2$ under landscape refinement.}
\end{figure}
\begin{figure}
\centering
\includegraphics[width=0.8\textwidth]{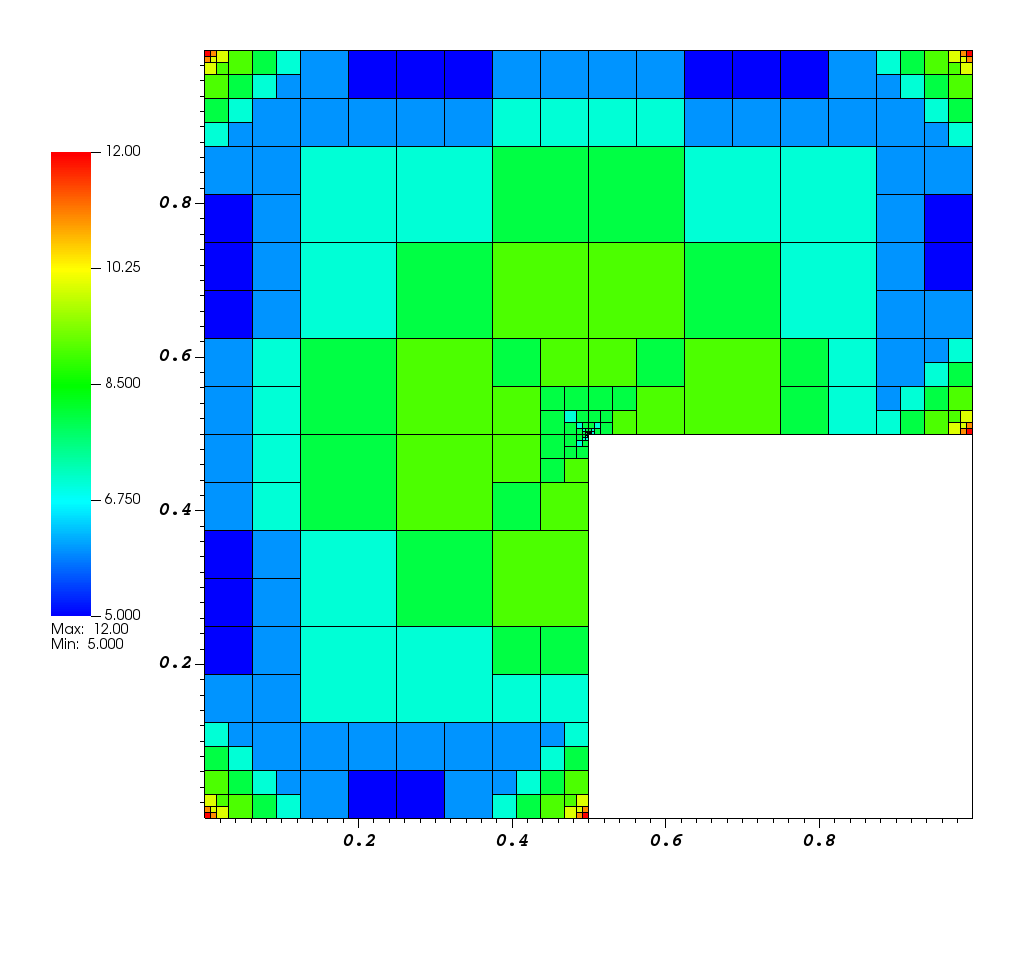}
\includegraphics[width=0.8\textwidth]{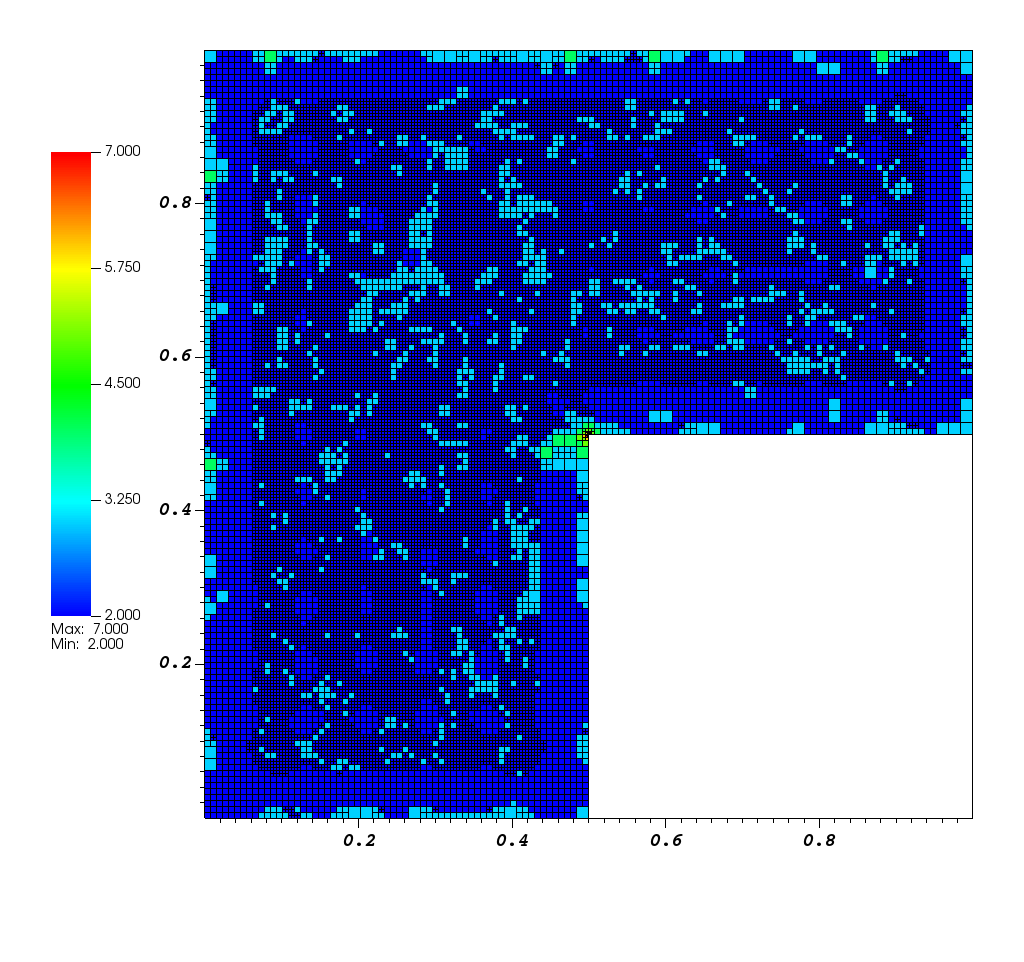}
\caption{\label{fig:lshape_mesh}
  Example~\ref{LShapeRevisited}. Final mesh computed using
  landscape refinement (top) and cluster refinement (bottom). Colors
  represent local polynomial degree.}
\end{figure}

We briefly comment on the high polynomial degrees seen in the
corners of the landscape-adapted mesh in Figure~\ref{fig:lshape_mesh},
which may seem counter-intuitive as the elements are small there.  A
wealth of empirical evidence indicates that such seeing high
polynomial degrees in regions with small elements is not at all
unusual for many $hp$-adaptive schemes,
and does not negatively impact the exponential convergence rate.  We
note the works~\cite{Canuto2017,Canuto2019} for their use of $h$
and/or $p$ coarsening during the adaptive process.  Under such a
scheme, we might not have obtained high polynomial degrees near the
corners, but the exponential convergence would not have been noticeably improved.
\end{example}

\begin{example}[Simple Schr\"odinger operator]\label{SimpleSchrodinger}
  Here we consider the Schr\"odinger operator $\cL w=-\Delta w + Vw$
  on the domain $\Omega=(0,1)^2$, where $V$ is the piecewise constant
  potential defined in Figure~\ref{fig:SimpleSchrodinger}, whose values range
  between 0 and 6400.  Again, we start with an initial mesh of
  $8\times 8$ elements and with initial order $p=2$, and consider
  landscape refinement, single eigenpair refinement, and cluster
  refinement for the first $M=100$ eigenpairs.  In the case of cluster
  refinement, we use AdaptSpectrumEigSum for marking elements and
  assessing local regularity.

In Figure~\ref{fig:SimpleSchrodinger}, we also see the convergence of
the relative error bound $\eta_{\mathrm{eig},j}^2/\lambda_{j,h}$ for
the $j$th (single) eigenpair under each of the three refinement
schemes.  Concerning the comparison between landscape refinement and
eigenpair refinement, we see similar behavior as we did in
Figure~\ref{fig:square_envelopes} for the Laplacian on this domain.
The convergence for $j=1$ was slightly better under eigenpair refinement than
for landscape refinement, reaching $10^{-4}$ first, and cluster
refinement was the worst.  Higher in the spectrum, $j=57$ and $j=76$, eigenpair
refinement and cluster refinement performed similarly, beating
landscape refinement until around $200^2$ DOF, when landscape
refinement began to significantly overtake them.
\begin{figure}
  \centering
  \includegraphics[width=0.48\textwidth]{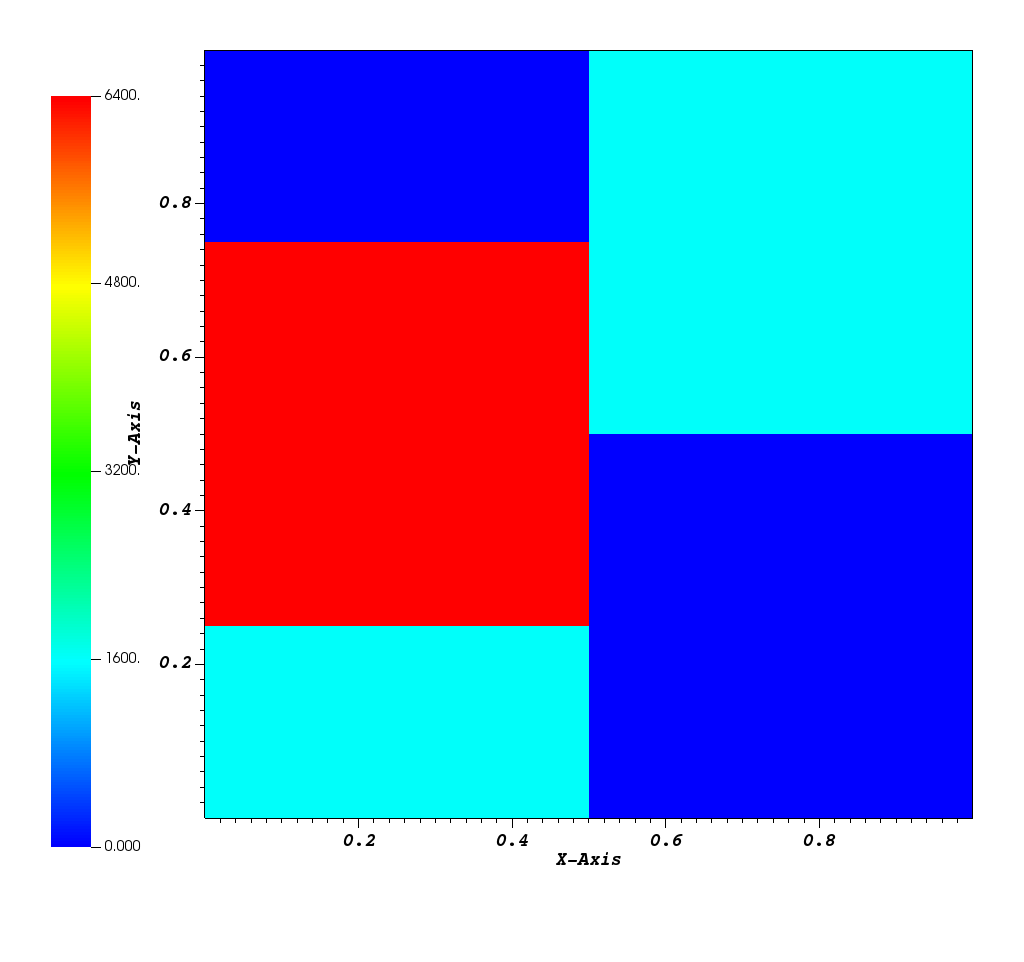}
\includegraphics[width=0.48\textwidth]{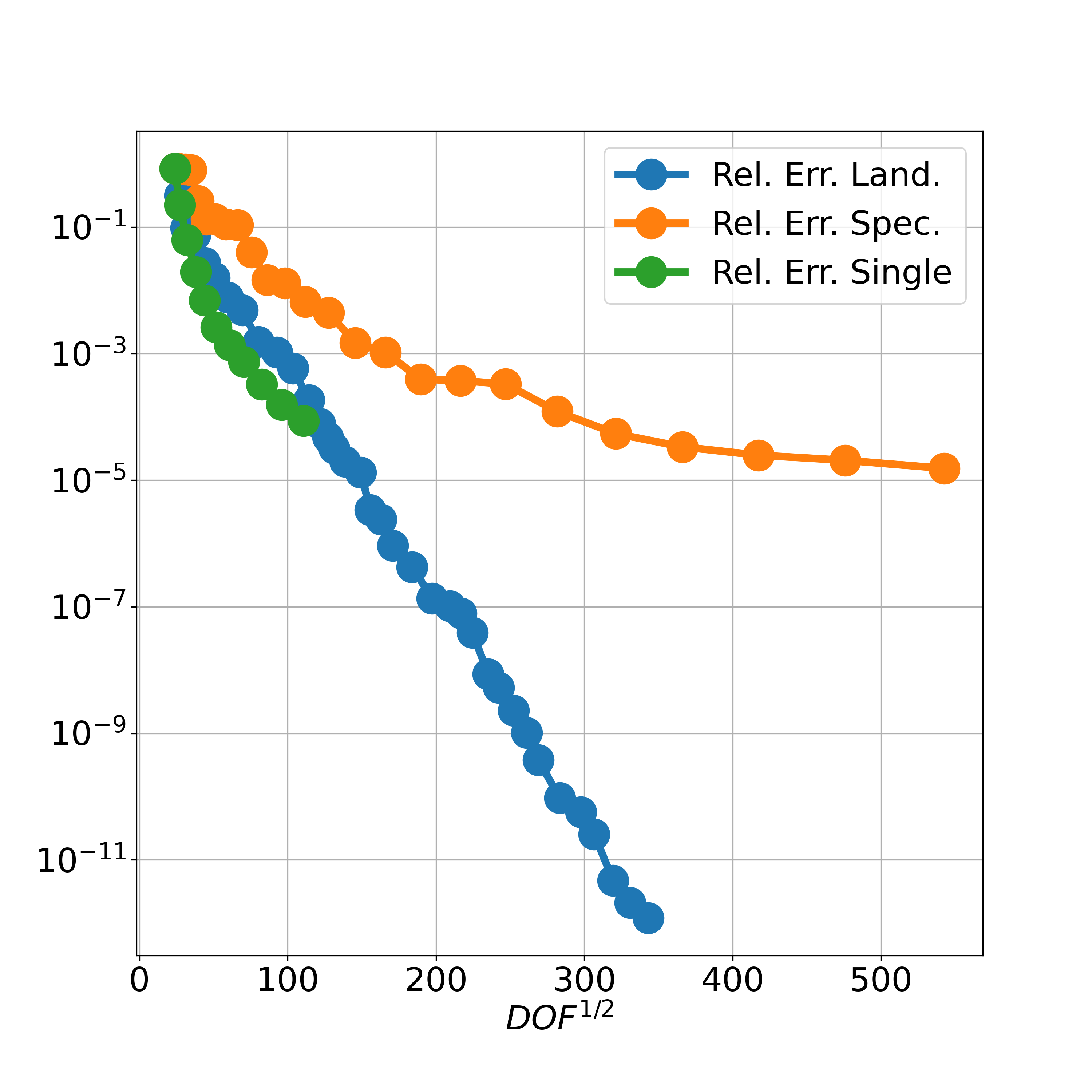}
\includegraphics[width=0.48\textwidth]{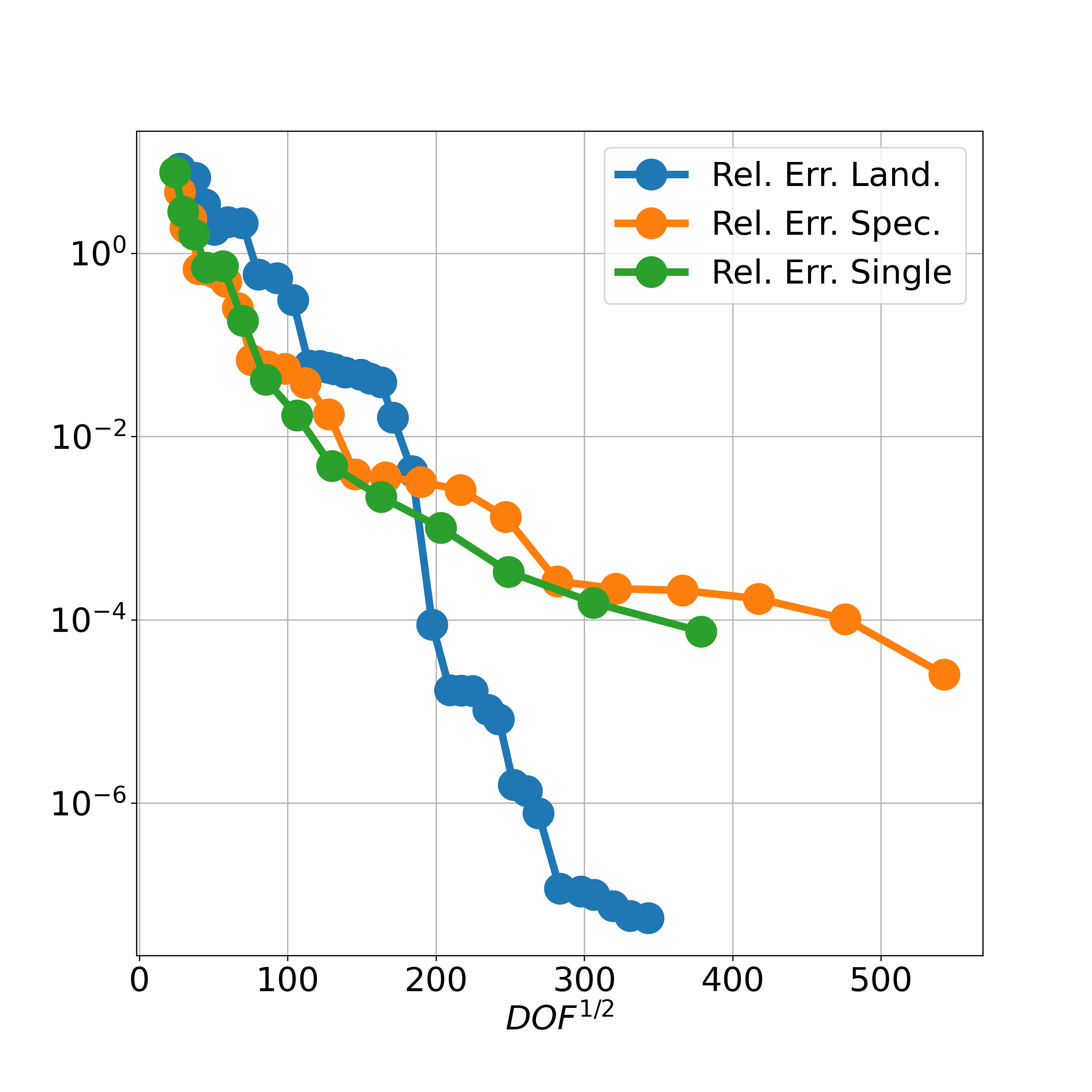}
\includegraphics[width=0.48\textwidth]{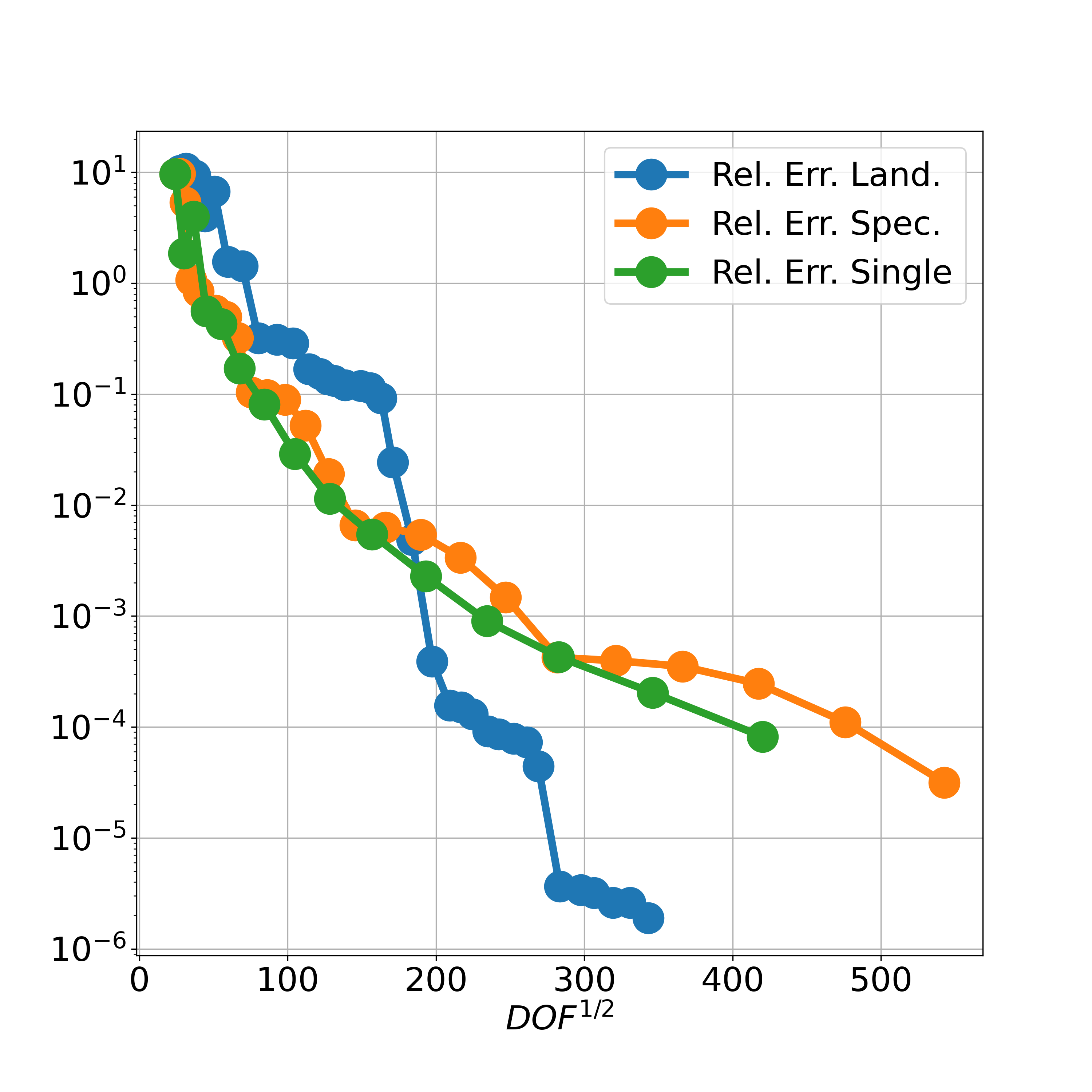}
\caption{Example~\ref{SimpleSchrodinger}.  Top left: The potential
  $V$.  Other plots: Convergence of the relative
  error bound $\eta_{\mathrm{eig},j}^2/\lambda_{j,h}$ for the $j$th eigenpair under landscape refinement,
  cluster refinement for the first $M=100$ eigenpairs, and single
  eigenvector refinement for the targeted eigenpair, $j=1,57,76$.  
 }\label{fig:SimpleSchrodinger}
\end{figure}
\end{example}

\begin{example}[Schr\"odinger Operator with Rough Potential]\label{HardSchrodinger}
We again take $\Omega=(0,1)^2$ and the Schr\"odinger operator $\cL
w=-\Delta w + Vw$, where $V$ is the piecewise constant potential
defined in Figure~\ref{fig:HardSchrodinger}, with values ranging between
about 19.23 and 7942 on a $20\times 20$ square partition of the
domain, similar to the examples presented in \cite{Filoche2012}.
These values were independently drawn from a uniform distribution on $[0,8000]$.
Also shown in this figure is the landscape function computed
in the finest space generated by landscape refinement.  The initial
mesh used order $p=2$ elements on the $20\times 20$ mesh used to
define $V$.  This was the type of problem that originally motivated our
consideration of landscape refinement, though we have already seen that
landscape refinement performs quite well on other problems for which the
landscape function does not exhibit such strong localization behavior.

Also in Figure~\ref{fig:HardSchrodinger} are convergence plots for the
relative error envelope bound $\eta_{\mathrm{max},\mathrm{rel}}^2$ for
the first $M=100$ eigenpairs, and several individual relative
eigenvalue errors $\eta_{\mathrm{eig},j}^2/\lambda_{j,h}$, under
landscape refinement.  For comparison, we also include the square of
the error bound for the landscape solution $\eta_\mathrm{land}^2$.  We
note that, although the individual eigenvectors have very different
behaviors, landscape refinement yields very similar convergence for
each of them.  Apart from scaling, this convergence tracks that of
$\eta_\mathrm{land}^2$ quite well, i.e.  all eigenpairs converge at
about the same speed as the landscape solution.

\begin{figure}
\centering
\includegraphics[width=0.48\textwidth]{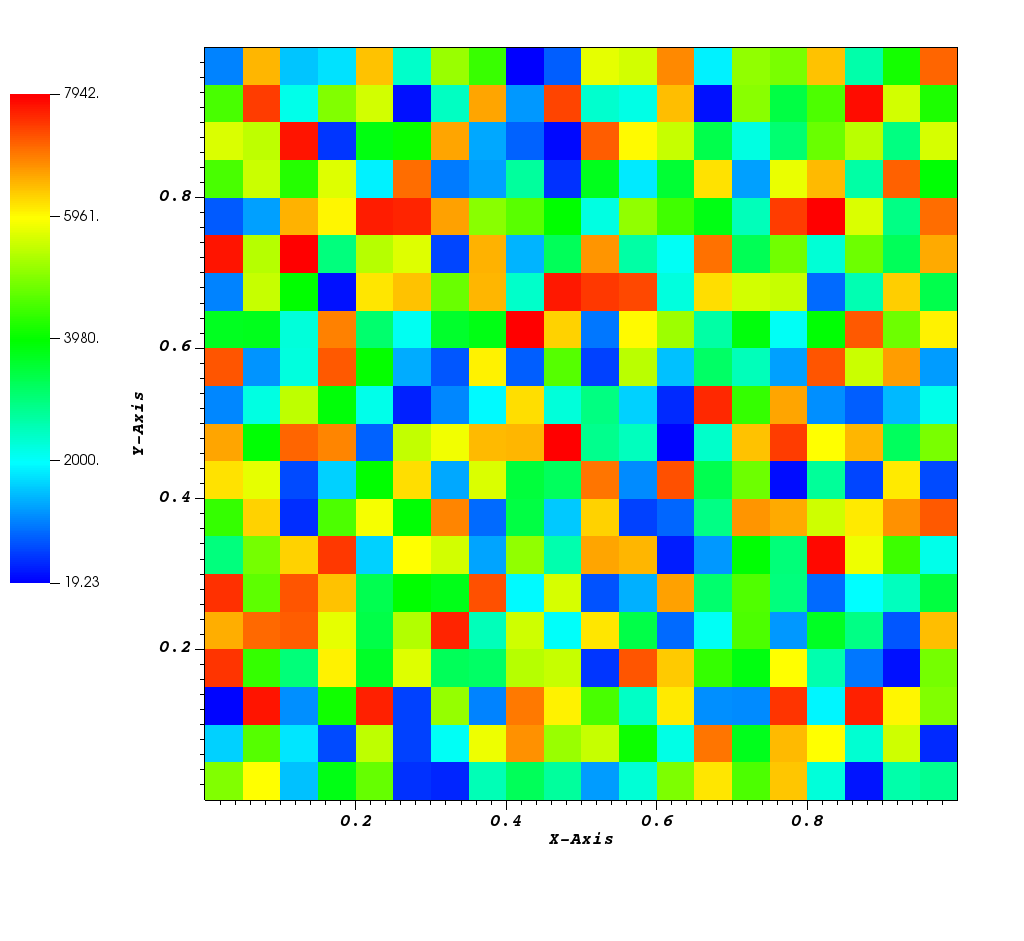}
\includegraphics[width=0.48\textwidth]{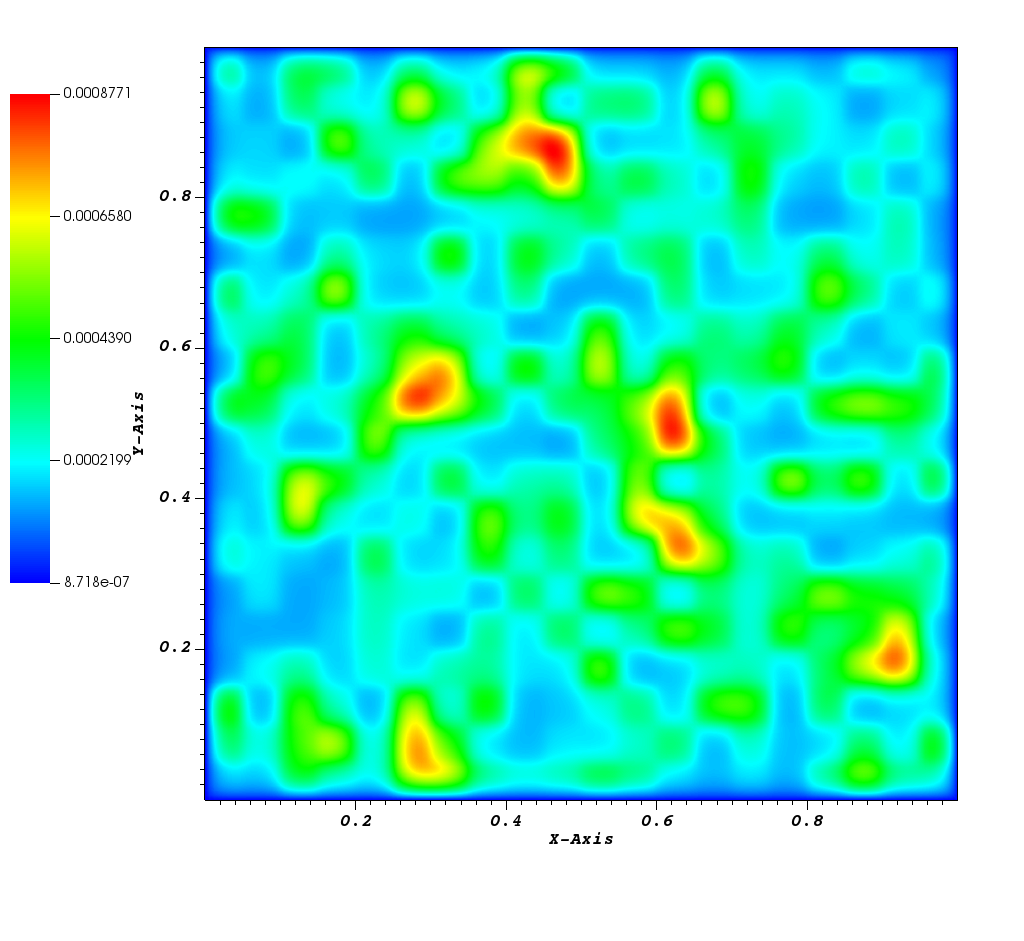}
\includegraphics[width=0.48\textwidth]{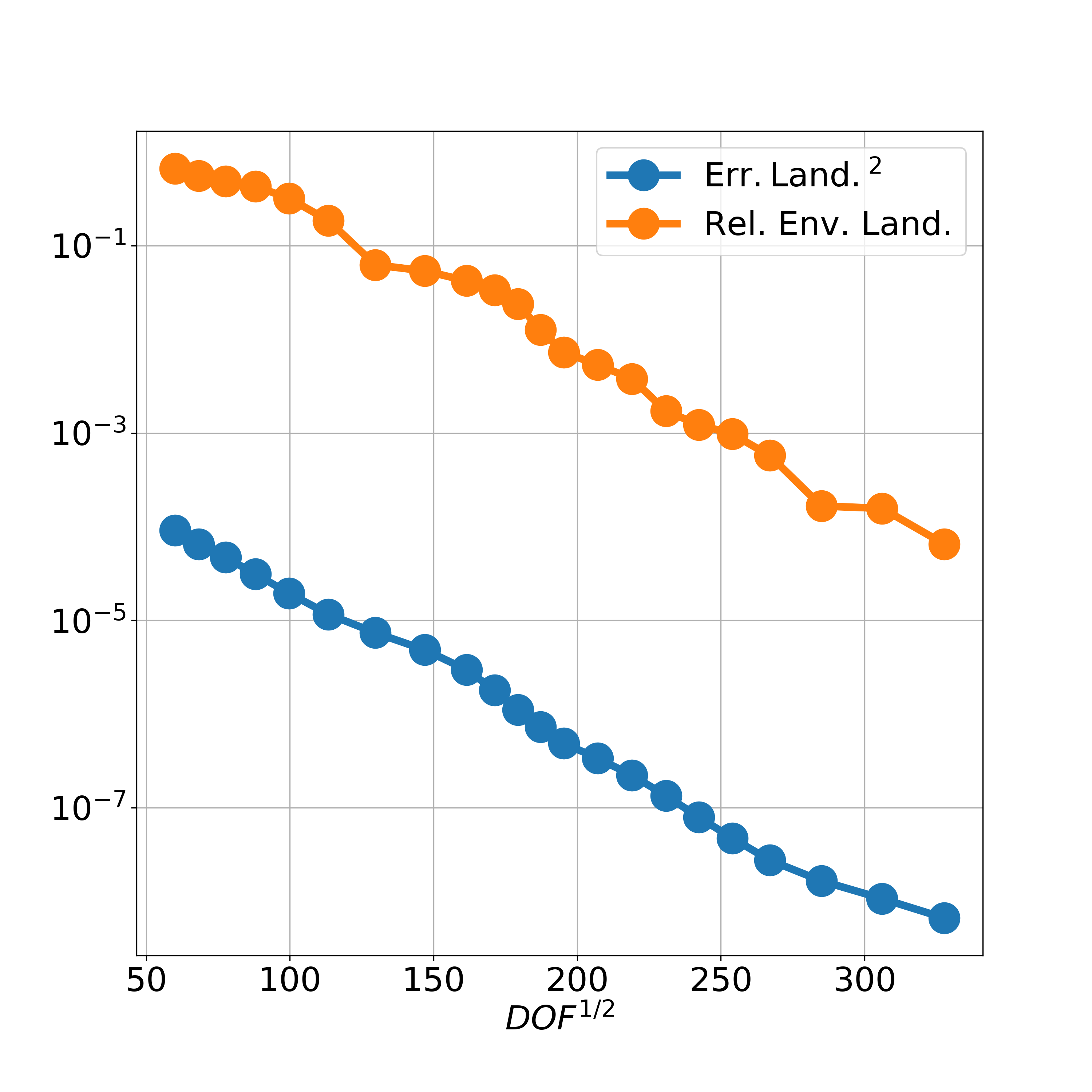}
\includegraphics[width=0.48\textwidth]{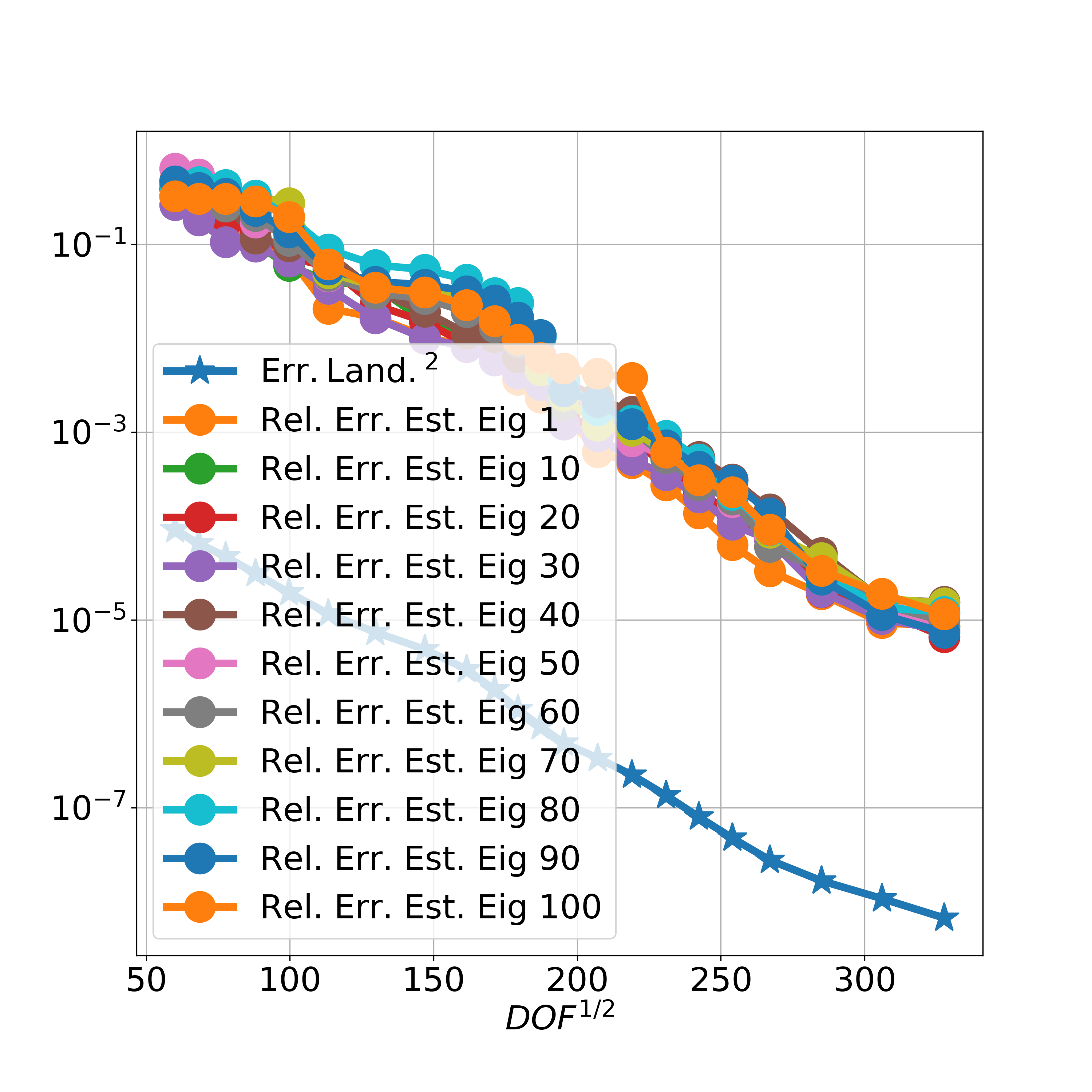}
\caption{Example~\ref{HardSchrodinger}.  Top row: The potential $V$ and the
  associated landscape function.  Bottom row: The relative error
  envelope bound $\eta_{\mathrm{max},\mathrm{rel}}^2$ (left) and
  several individual relative eigenvalue error bounds
  $\eta_{\mathrm{eig},j}^2/\lambda_{j,h}$ (right) under landscape refinement, plotted together
  with $\eta_\mathrm{land}^2$.}\label{fig:HardSchrodinger}
\end{figure}
\end{example}

\begin{example}[Discontinuous Diffusion]\label{DiscDiff}
Here, we consider the operator $\cL w=-\nabla\cdot(A\nabla w)$, where $A$ is piecewise
constant with value 1 in $(0,0.5]^2$ and $[0.5,1)^2$ and $\beta^2=10$
elsewhere.  We consider the first $M=50$ eigenpairs.
This kind of discontinuous diffusion was considered by
Kellogg~\cite{Kellogg1974} for the source problem, and the results
there suggest that some eigenvectors be singular near $(1/2,1/2)$,
with asymptotic behavior like $r^\alpha$ for some $\alpha\in(0,1)$,
where $r$ is the distance to $(1/2,1/2)$.  In this case, $\alpha$ can
be as small as $\alpha=(4/\pi)\mathrm{arccot}(\beta) \approx
0.389964$; see~\cite{Giani2016a}, where a related problem on the unit
disk is worked out analytically.

Convergence of the relative error envelope bound
$\eta_{\mathrm{max},\mathrm{rel}}^2$ and some individual relative
error bounds $\eta_{\mathrm{eig},j}^2/\lambda_{j,h}$ are given for
both landscape refinement and cluster refinement, in
Figure~\ref{fig:DiscDiff}.  For cluster refinement,
AdaptSpectrumEigSum is used.  As before, $\eta_\mathrm{land}^2$,
computed under landscape refinement, is provided for comparison.
Again, we begin with order $p=2$ elements on an $8\times 8$ square
partition of the domain.
\begin{figure}
  \centering
  \includegraphics[width=0.48\textwidth]{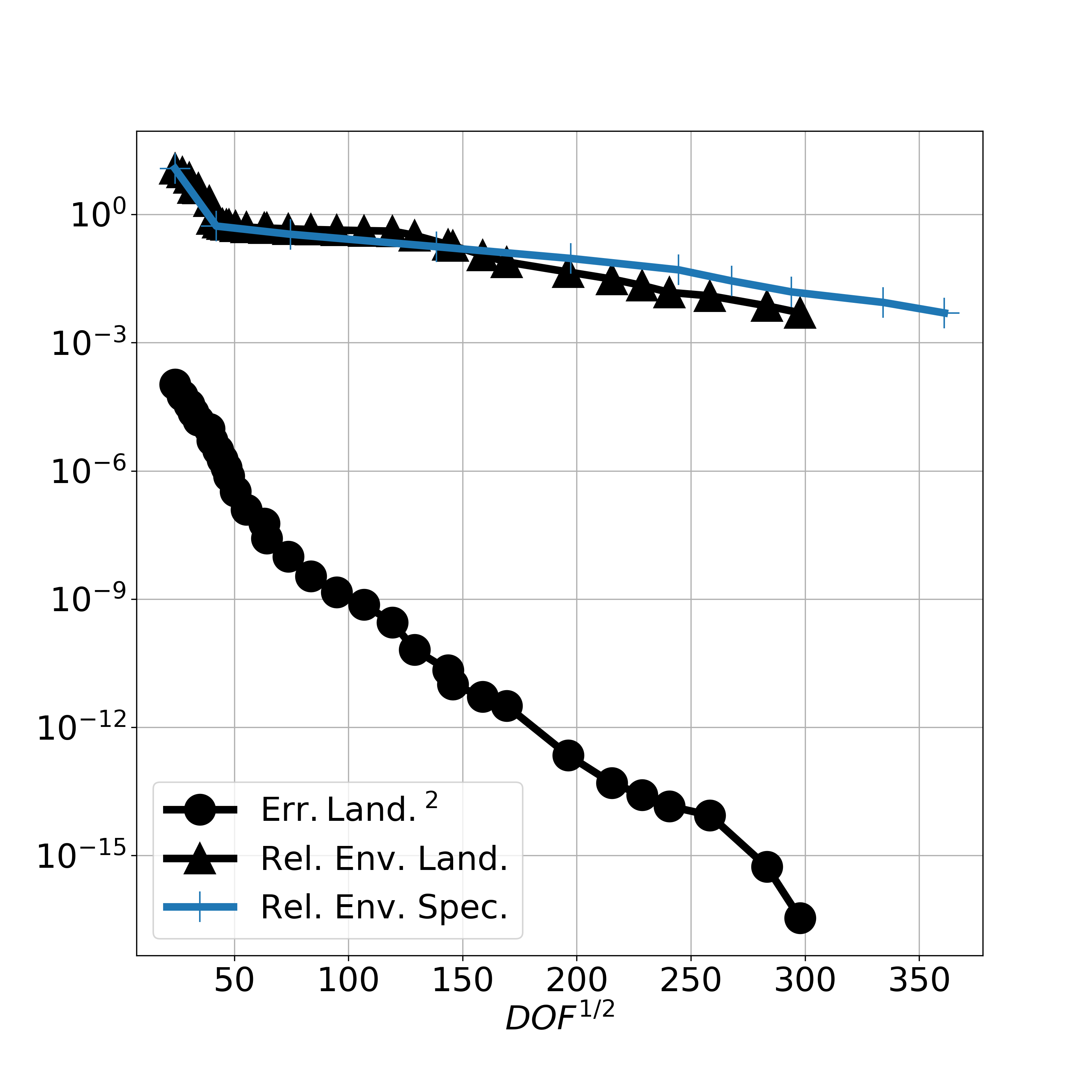}
  \includegraphics[width=0.48\textwidth]{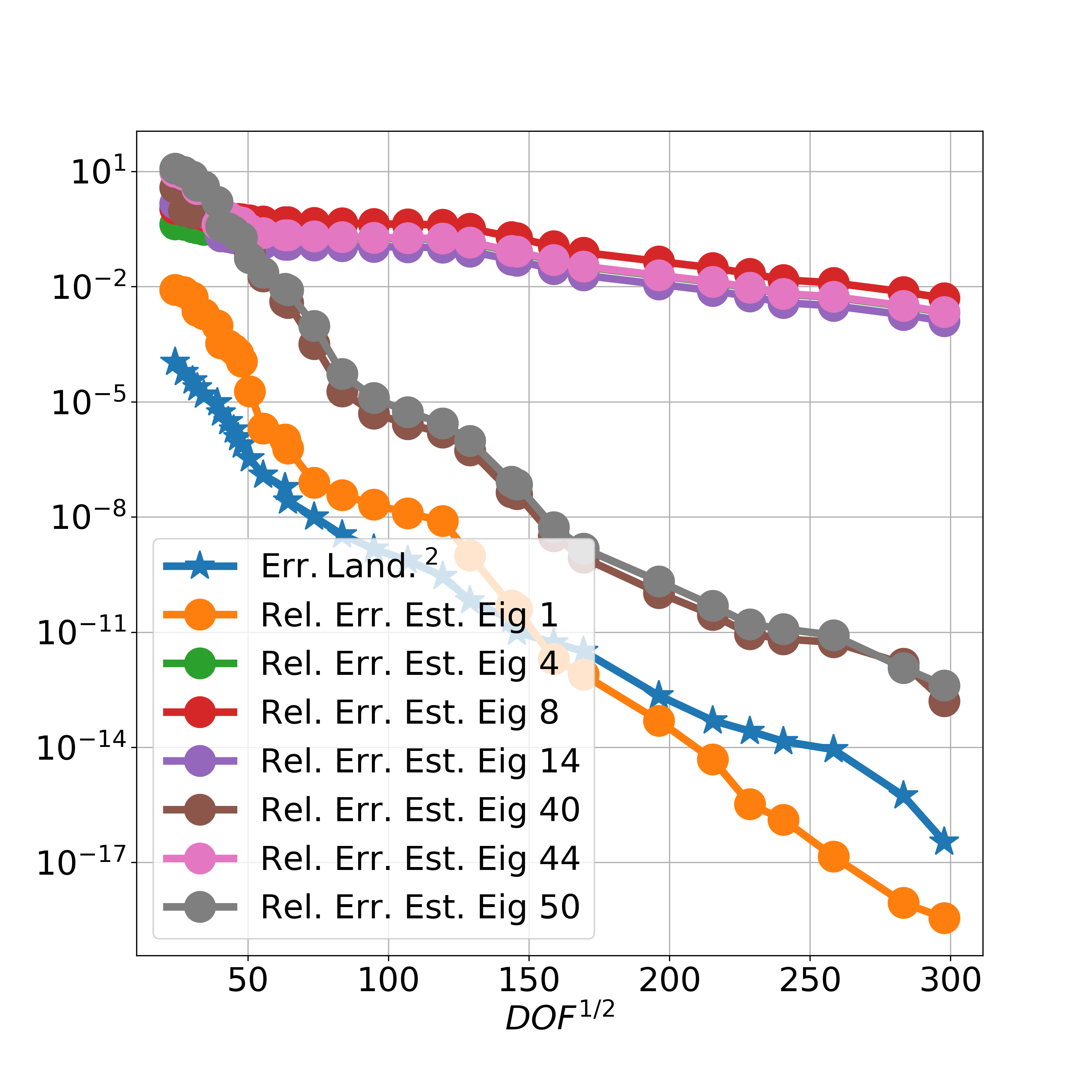}
\includegraphics[width=0.48\textwidth]{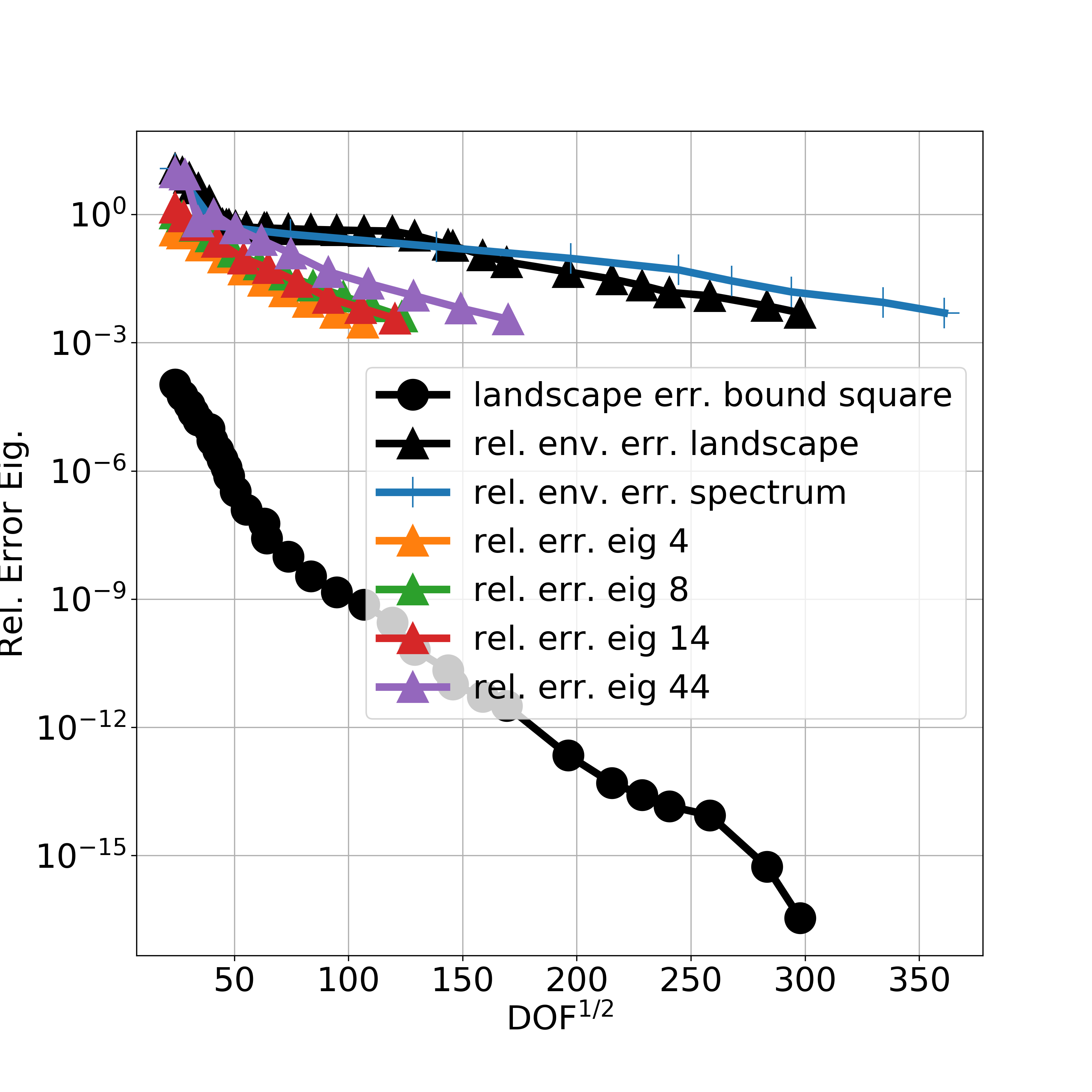}
  \includegraphics[width=0.48\textwidth]{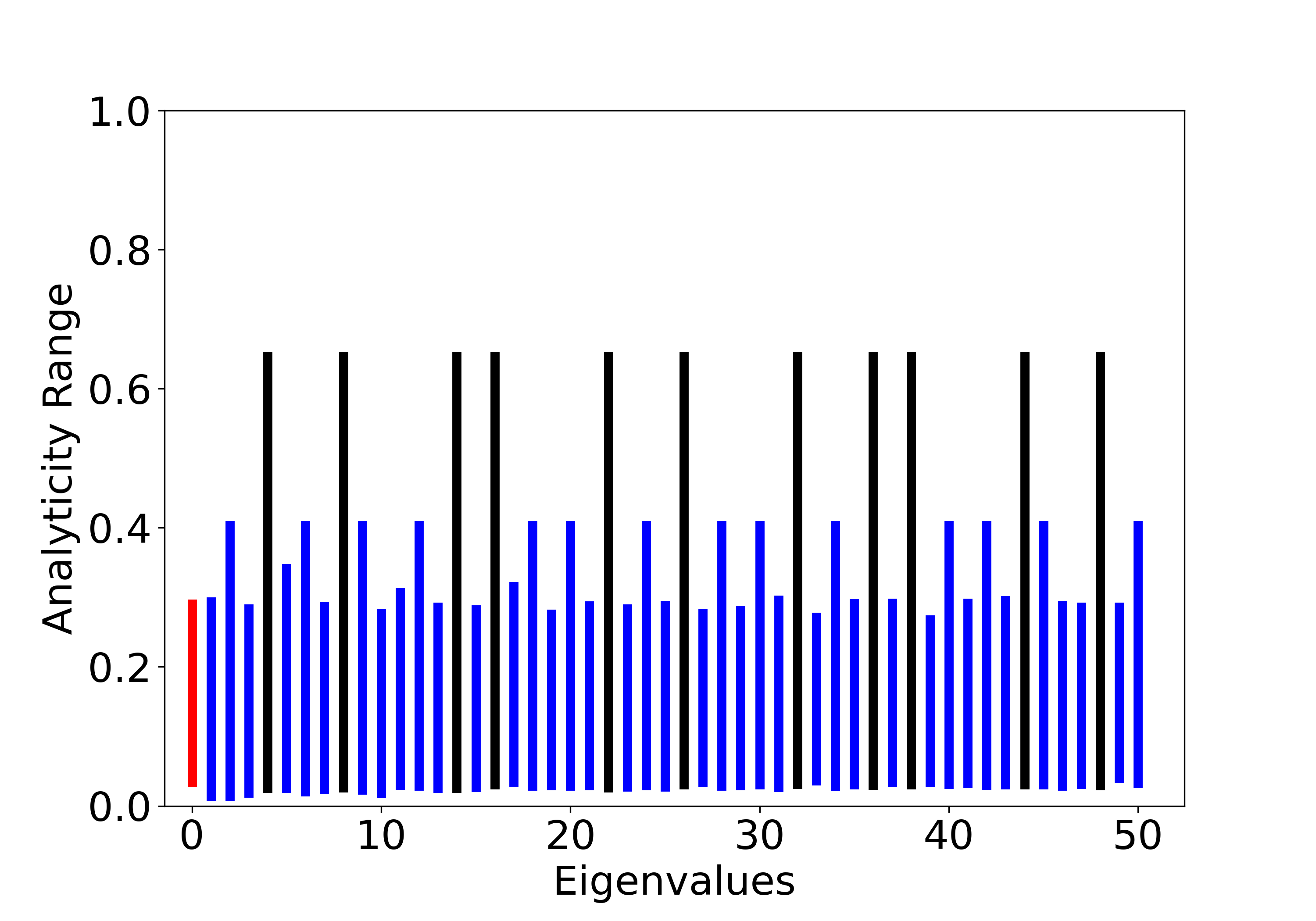}
\caption{\label{fig:DiscDiff} Example~\ref{DiscDiff}. Top left: Convergence
  plots of the square of the error bound for the landscape solution
  $\eta_{\mathrm{land}}^2$ and the envelope for the relative error
  bounds $\eta_{\mathrm{max},\mathrm{rel}}^2$ computed using landscape
  refinement and cluster refinement. Top right: Convergence rates of
  relative eigenvalue error estimates under landscape refinement for several
  eigenvalues within the cluster.  Bottom left: Convergence rates of
  eigenvectors 4, 8, 14 and 44 under single eigenpair refinement,
  superimposed over the top left plot.
Bottom right: Range of numerically
  estimated local regularity measures on the elements
  of the final mesh for each eigenpair. The landscape function is
  indicated in red, the slow-converging eigenpairs
  are in black, and the rest are in blue.}
\end{figure}
Landscape refinement overtakes cluster refinement near $150^2$ DOF,
but the convergence curves remain much closer to each other than in
previous examples, and the convergence, though still exponential, is
slower.  We also see in this figure the convergence of several
individual relative eigenvalue error estimates within the cluster.
The eigenvalues with indices 1, 40 and 50 converge like
$\eta_{\mathrm{max},\mathrm{rel}}^2$, whereas 4, 8, 14, and 44
converge much more slowly.  We highlight, however, that this slower
convergence is still exponential.

A likely explanation of this is that the
landscape function is more regular (less singular near $(1/2,1/2)$)
than these eigenvectors.  It is quite possible, due to symmetries in
the operator, that the eigenvectors with the strongest singular near
the $(1/2,1/2)$ have average value zero, so they do not appear in the
Fourier expansion of the landscape function $u$.  Based on our
computations, this appears to be the case.
Figure~\ref{fig:DiscDiff} provides an analysis of the numerically estimated
regularity for all eigenpairs in the cluster on the finest mesh.  These local regularity
measures were computed based on the algorithm described
in~\cite{houston_note_2005}, which uses the decay rates coefficients of local
Legendre expansions of the function under consideration to assess
regularity.  Without getting into detail here, we remark that these
local regularity measures are between $0$ and $1$, and that larger measures
indicate less regularity. Each vertical segment represents the ranges
of the regularity measures for the corresponding function.  We see
that eigenvectors 4, 8, 14, and 44 are among the 11 eigenvectors in
this cluster having the lowest regularity (22\% of them), and the landscape function
$u$ has the highest regularity.  Since landscape refinement uses this
local regularity indicator to determine how to refine a marked element
($h$ vs. $p$), this makes sense of the slower convergence of the less
regular eigenvectors.

Although all eigenvectors are converging exponentially under landscape
refinement, one might consider various ways of speeding up the
convergence of slower ones. The simplest thing to attempt in this
regard would be to decrease the parameter $\mathrm{tol}_\mathrm{ana}$
in the landscape refinement algorithm
(Algorithm~\ref{alg:AdaptLandscape}) to force $h$-refinement for more
of the marked elements.  The value of this parameter used in all
experiments in this section so far is
$\mathrm{tol}_\mathrm{ana}=0.25$.  We briefly summarize the results of
choosing $\mathrm{tol}_\mathrm{ana}\in\{0.1,0.001,0.0001\}$:
decreasing $\mathrm{tol}_\mathrm{ana}$ caused the convergence rates of
eigenpairs to get closer to each other, but did not really improve
individual or overall convergence rates. 
A more efficient way to deal with slow converging eigenpairs would be
to flag them either by looking at their convergent rates or their
local regularity measures, and treating these eigenvectors
differently.  There are a few natural ways of doing so.  Once the slow
converging eigenvectors are identified, one might use the worst of the
local regularity measures for these eigenvectors and the landscape
function to decide how to refine a marked element.  Alternatively, one
might ``extract'' them from the cluster and continue their
approximation individually on multiple meshes using single eigenpair
refinement, or perhaps collectively on a single mesh using cluster
refinement for those extracted eigenpairs.  We do not develop these
variants here, but instead show the effects of single eigenpair
refinement on eigenpairs 4, 8, 14 and 44 in Figure~\ref{fig:DiscDiff},
overlaid on the convergence plot for the entire cluster under
landscape and cluster refinements, to suggest how flagging slow
converging eigenpairs under landscape refinement for alternative
treatment can be beneficial.

\change{In the discussion preceding Example~\ref{hAdaptLShape} we
  indicated that is it possible that the source term $f=1$ used for
  our landscape function could be orthogonal to all eigenvectors of a
  certain type, e.g. all eigenvectors possessing the strongest point
  singularity, though that is expected to be unlikely in practice.  In
  that same discussion, we briefly mentioned the possibility of using a
  different source term $f$, and driving adaptivity based on the
  associated $u_f$.  In our discussion for the current example, we
  suggested that this orthogonality is probably what is causing the slower
  convergence of a few eigenvectors here.  The results presented in
  Figure~\ref{fig:DiscDiff2} provide strong support of this claim, and
  also provide a simple potential alternative to flagging slow-converging
  eigenvectors for special treatment.  We simply use a different
  source term $f$, as indicated above.}

\change{ Our first variation of the original problem is to move the
  singular point from $(1/2,1/2)$ to $(3/4,3/4)$ by modifying the
  diffusion coefficient $A$.  In this variant, $A=1$ on $(0,0.75]^2$
  and $[0.75,1)^2$, and $\beta^2=10$ elsewhere.  This shifting of the
  singularity results (empirically) in $f=1$ no longer being
  orthogonal to the eigenvectors of the new operator.  The top row in
  Figure~\ref{fig:DiscDiff2} provides a comparison between the
  behavior under landscape refinement for the original problem and
  this variant, and we see that the landscape function for the variant
  appears to encode the worst singular behavior in the eigenvectors.
  More specifically, the stagnating convergence observed in the
  original problem is broken at around $100^2$ DOFs, and there are no
  longer any slowly converging eigenvectors.}

\change{Our second variation of the original problem keeps the
  singular point at $(1/2,1/2)$, but uses the source term $f=1-3x$ to
  drive adaptivity based on $u_f$.  We highlight that there is nothing
  particularly special about this choice of $f$ beyond it being a
  low-degree polynomial that happens to be non-orthogonal to the most
  singular eigenvectors. The bottom plot in Figure~\ref{fig:DiscDiff2}
  compares the use of $f=1-3x$ and $f=1$.  This again corroborates our
  assertion that the culprit behind the slow convergence of some
  eigenvectors in the original example was the orthogonality of $f=1$
  and those vectors, which led to $u$ failing to encode those
  singularities.  It is also clear that a modest change to $f$
  completely fixed this issue.  The effectiveness of this alternate
  approach hints at a modification of our initial strategy of doing
  adaptive refinement based on a single source problem: choose a small
number of fixed sources, e.g. $\{1,x,y\}$, and drive adaptivity
based on the the associated solutions.  It becomes increasingly
unlikely that (singular) eigenvectors are orthogonal to even a small
set of source terms.  We do not pursue this idea further here.}
\end{example}
\begin{figure}
    \centering
\includegraphics[width=0.48\textwidth]{./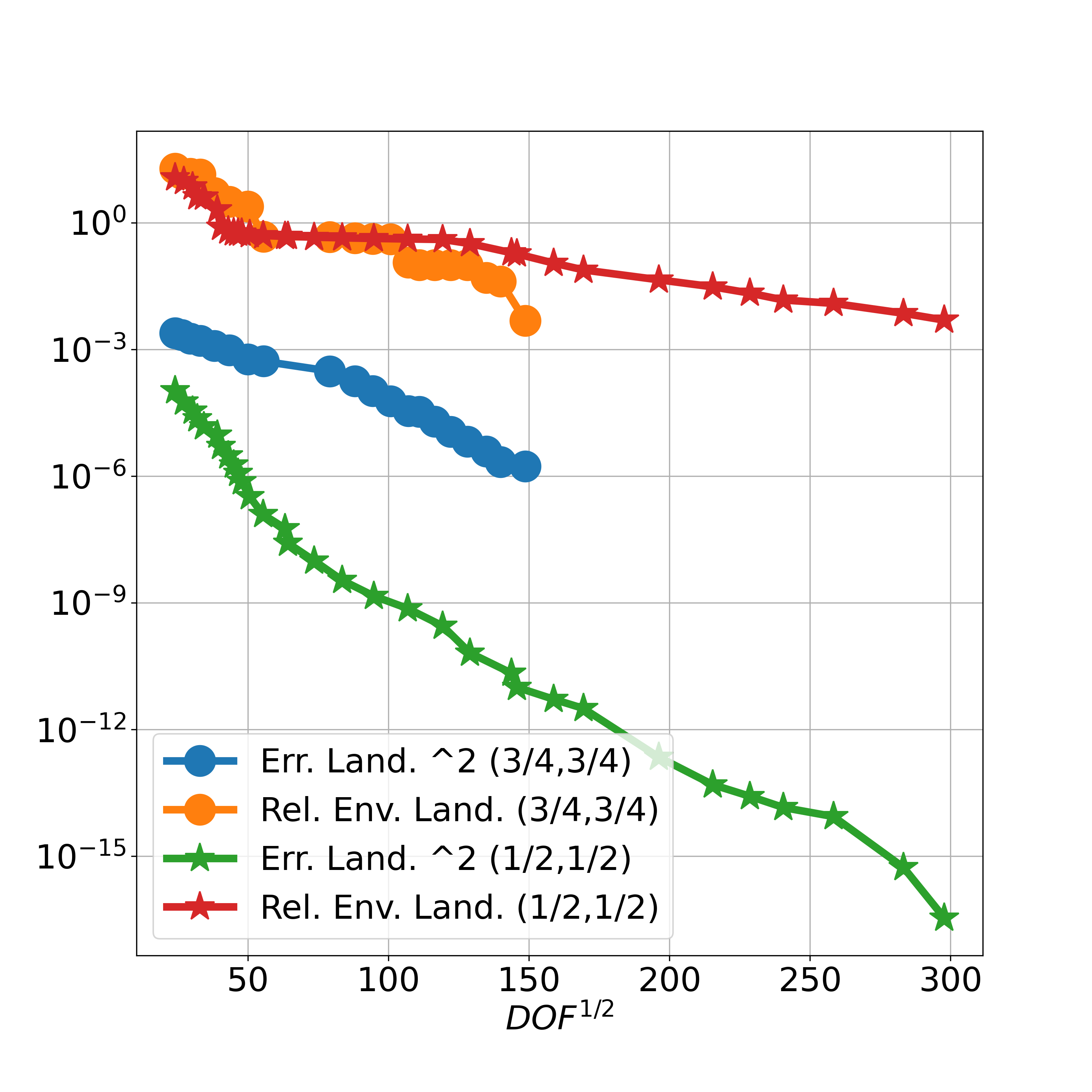}
\includegraphics[width=0.48\textwidth]{./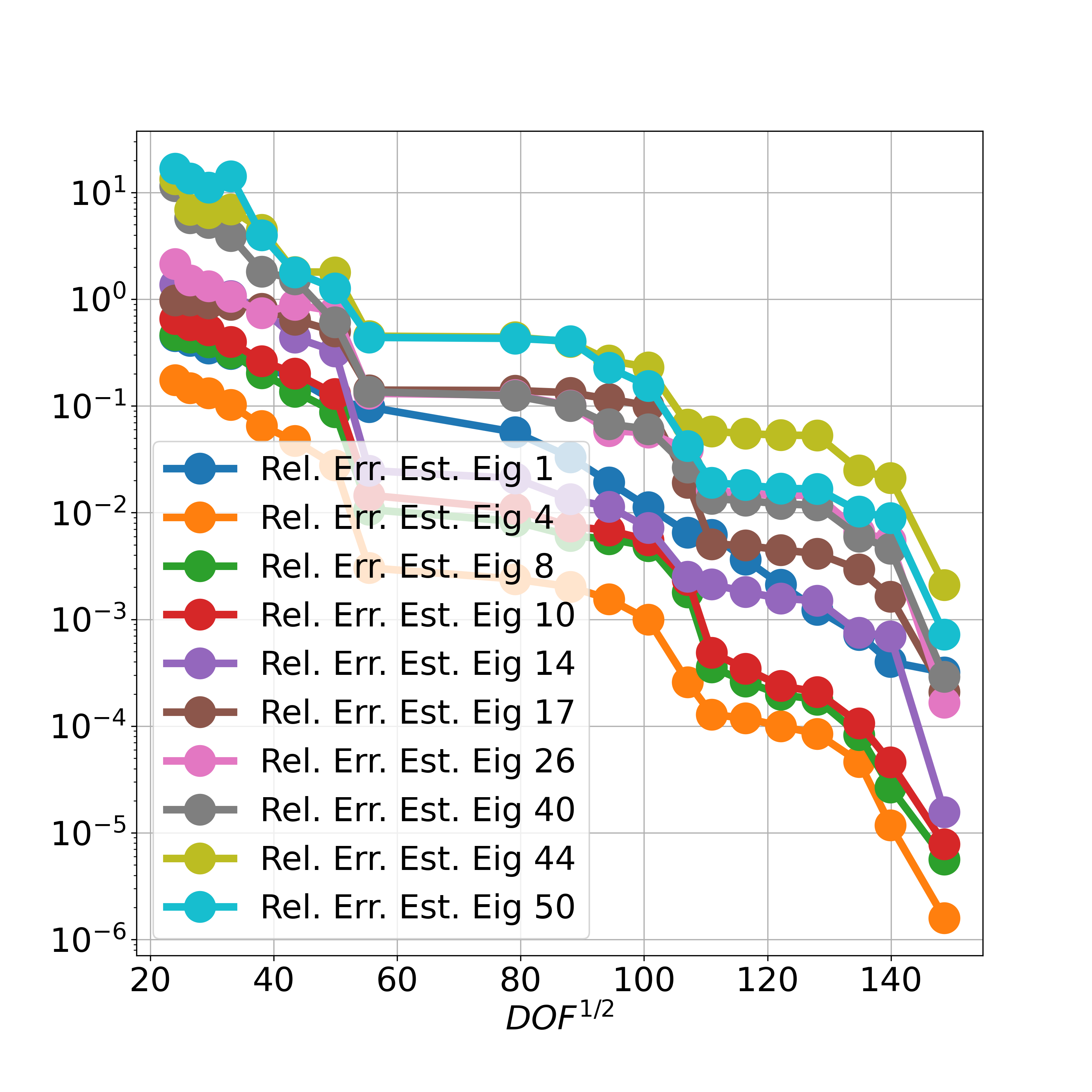}
\includegraphics[width=0.48\textwidth]{./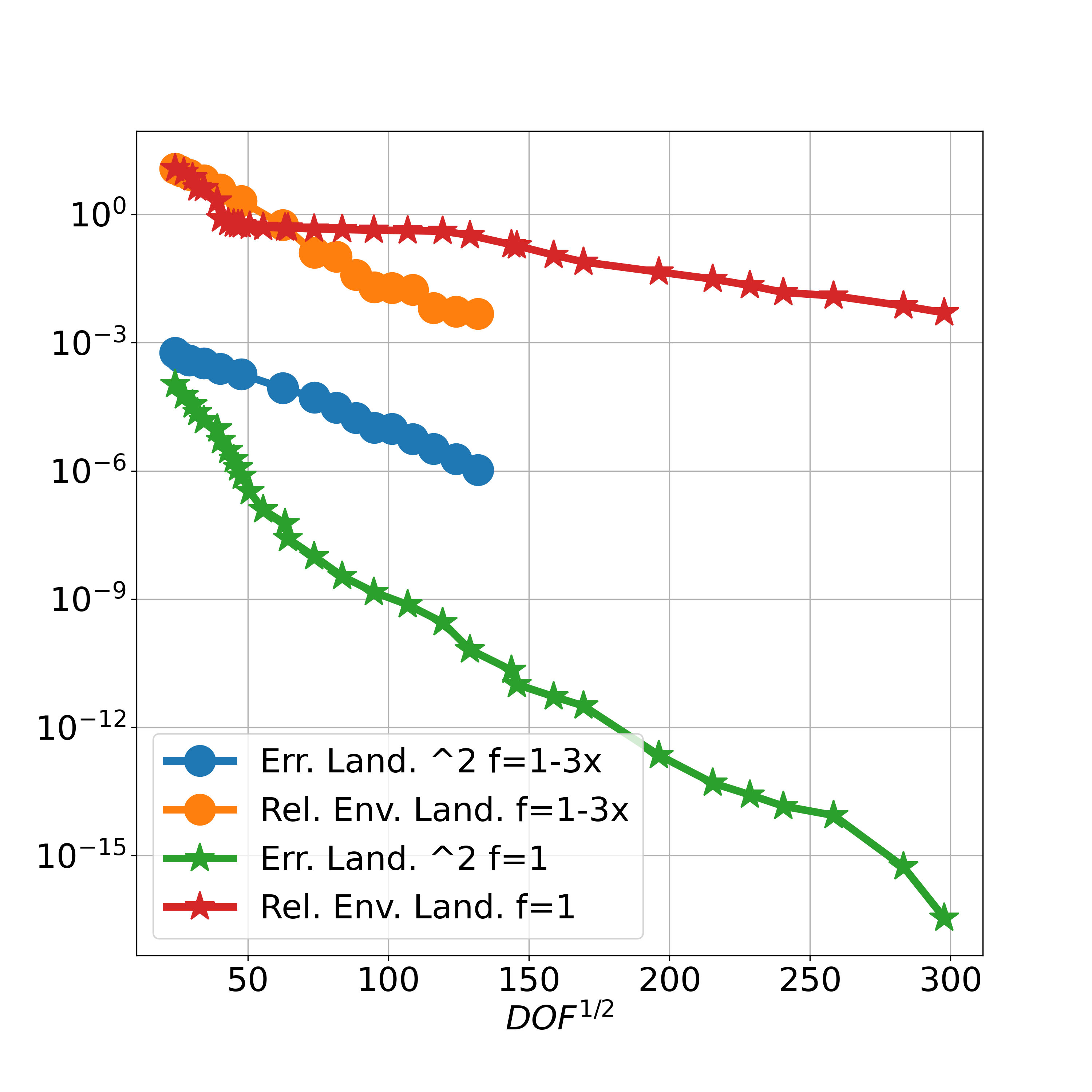}
\caption{\label{fig:DiscDiff2} Example~\ref{DiscDiff}. Top left:
  Convergence plots of the square of the error bound for the landscape
  solution $\eta_{\mathrm{land}}^2$ and the envelope for the relative
  error bounds $\eta_{\mathrm{max},\mathrm{rel}}^2$ computed using
  landscape refinement with singular point at $(1/2,1/2)$ and at
  $(3/4,3/4)$. Top right: Convergence rates of relative eigenvalue
  error estimates under landscape refinement for several eigenvalues
  within the cluster when the singular point is at $(3/4,3/4)$.
Bottom: Refinement driven by $u_f$ for the source term $f=1-3x$
compared with that of $u$ ($f=1$), with
singular point at $(1/2,1/2)$.}
\end{figure}

\change{\begin{example}[Perforated Domain]\label{SquareHoles} The
    family of domains $\Omega_m$ obtained by deleting $m^2$ squares
    having edge lengths $H=1/(2m+1)$ from the unit square, as pictured
    in Figure~\ref{fig:SquareHoles} for $m=3$, provides a setting in
    which some eigenvectors have many singularities, but other
    eigenvectors are analytic and known explicitly.  More
    specifically, if we consider the Dirichlet Laplacian on
    $\Omega_m$, many eigenvectors, including the ground state, will
    have $4m^2$ point singularities of type $r^{2/3}$, one for each of
    the non-convex corners in the domain.  The landscape function $u$
    will also have these same point singularities.  However, there are
    many analytic eigenvectors as well.  For example,
    $\psi=\sin(j\pi x/H)\sin(k\pi y/H)$ is an eigenvector with
    corresponding eigenvalue $\lambda = (j\pi/H)^2+(k\pi/H)^2$ for
    $j,k\in\NN$.  The smallest of these, which we expect to belong to
    the first analytic eigenvector, is $\lambda=2(\pi/H)^2$.

    When $m=3$, the first 100 eigenvalues are in the interval
    $[301.46,2070.55]$, and this interval only includes one of
    analytic eigenvectors described above, namely
    $\lambda=2(7\pi)^2\approx 967.22123$.  It is 41st in the spectrum,
    i.e. $\lambda=\lambda_{41}$.  The 41st computed eigenvector
    approximates $\sin(7\pi x)\sin(7\pi y)$ (up to scaling) reasonably
    well even on the coarsest space, which consists of quadratic
    elements on the mesh shown in Figure~\ref{fig:SquareHoles}.  Our
    empirical analyticity measures indicate that this is the only
    smooth eigenvector among the first 100, which aligns with our
    expectations. All other eigenvectors have analyticity measures
    similar to that of $u$, which suggests that each has a point
    singularity at at least one of the non-convex corners.  This again
    agrees with our expectations for this problem.
    
    Because of the large number of singularities for $u$, one might
    naturally wonder if landscape refinement yields sub-optimal
    convergence for smooth eigenvectors such as $\psi_{41}$, because
    it would seem to concentrate much of its ``effort'' on resolving
    singularities that are not present in such eigenvectors.  As seen
    in Figure~\ref{fig:SquareHoles}, this intuition is correct.
    Refinement based on the computed approximation of $\psi_{41}$
    first achieves eigenvalue errors below $10^{-4}$ at around 40K
    DOFs, whereas this error is not achieved for landscape refinement
    until around 120K DOFs.  We note, however, that the best observed
    convergence rates for both approaches are very similar---the
    landscape approach just takes longer to achieve that rate.  This
    example provides a more dramatic instance of behavior we have seen
    in Figures~\ref{fig:square_compare} (left)
    and~\ref{fig:SimpleSchrodinger} (top right), namely that
    convergence of a single eigenpair under refinement that is
    (ideally) tailored to that specific eigenpair can outperform a
    more generic approach such as landscape refinement.  It is perhaps
    more noteworthy that landscape refinement sometimes ``wins''
    against single eigenvector refinement in this setting, and these
    same figures show that it can.  Regardless, we primarily promote
    landscape refinement as an efficient means of driving down
    collective measures of error (e.g. max error) for a cluster of
    eigenvalues, and such clusters will generically contain
    eigenvectors having singularities like those of $u$.  If one is
    willing to compute local analyticity measures, which is not
    uncommon in $hp$-adaptive algorithms, any eigenvectors that are
    flagged as being (much) smoother than $u$ could be
    extracted from a cluster for different treatment, in a similar
    fashion as we proposed in Example~\ref{DiscDiff} for eigenvectors that are
    more singular than $u$.
    \end{example}
  }
\begin{figure}
  \centering
  \includegraphics[width=0.48\textwidth]{./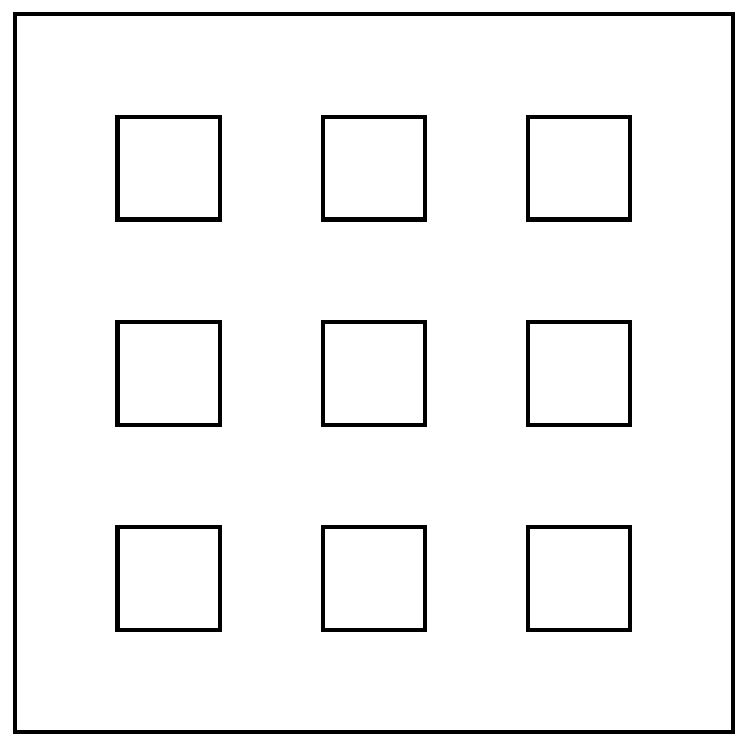}
  \includegraphics[width=0.48\textwidth]{./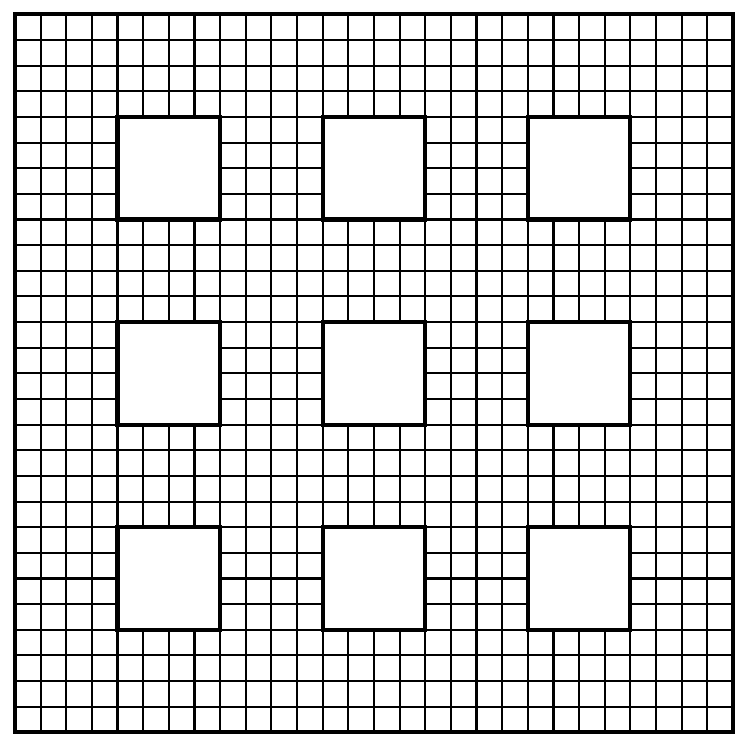}
  \includegraphics[width=0.48\textwidth]{./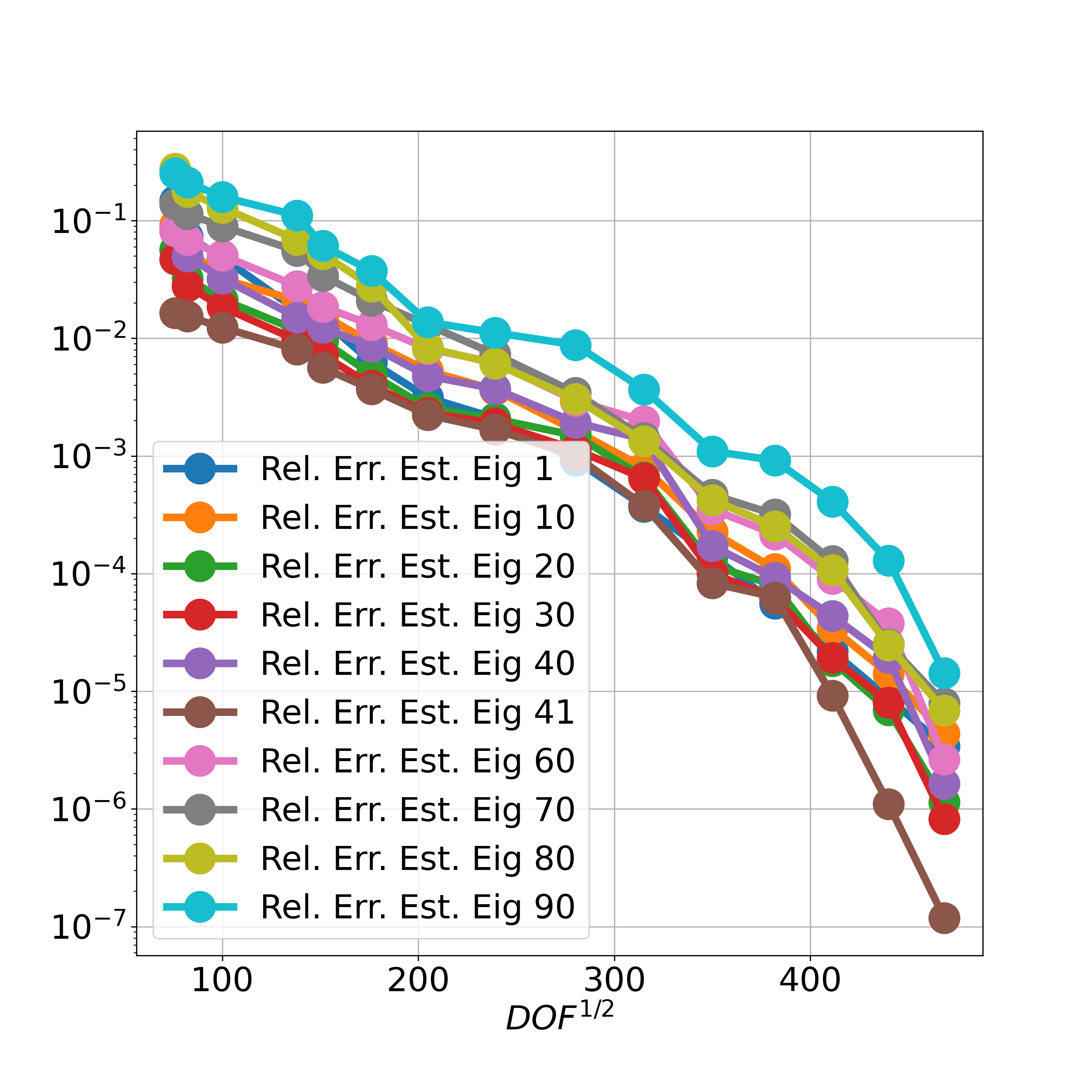}
  \includegraphics[width=0.48\textwidth]{./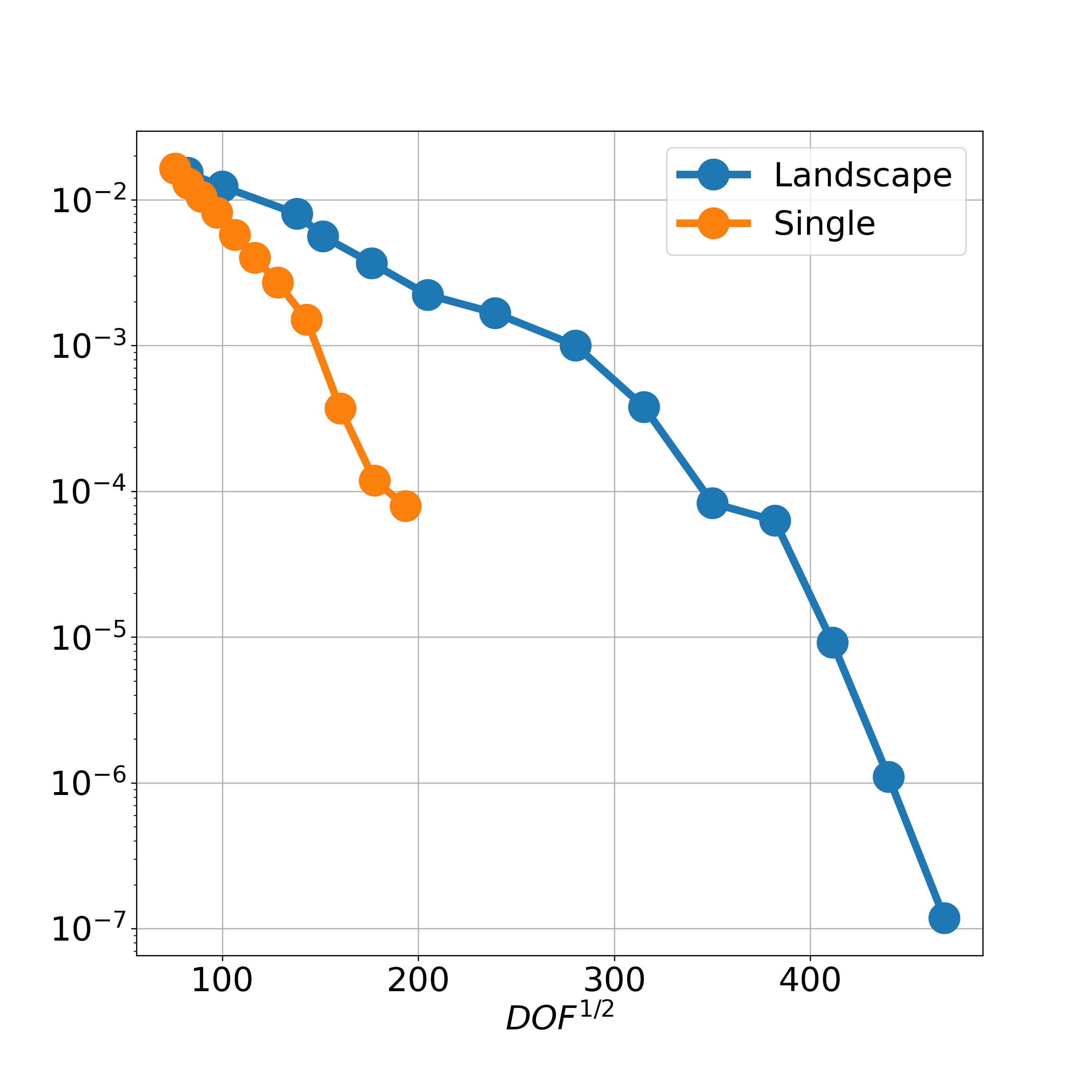}
\caption{\label{fig:SquareHoles} Example~\ref{SquareHoles}. Top row: The
  perforated unit square with $m^2$ square holes ($m=3$), and
  the initial mesh.  Bottom row:
  Eigenvalue convergence under landscape refinement for several
  eigenvalues, including $\lambda_{41}$, which has the only analytic
  eigenvector among the first 100.  Comparison of convergence for
  $\lambda_{41}$ under landscape refinement and single eigenvector
  refinement based on the computed approximation of $\psi_{41}$.}
\end{figure}

\section{The Algorithms}\label{sec:alg}

The algorithms are organized as follows:
\begin{enumerate}
\item Algorithm~\ref{alg:adapt_single} (single eigenpair refinement). The marking and
  refinement strategies are given in Algorithm~\ref{alg:AdaptSingleEig} (AdaptSingleEig).
\item Algorithm~\ref{alg:adapt_multi} (cluster refinement).  The
  marking and refinement strategies are given either by
  Algorithm~\ref{alg:AdaptSpectrumSum} (AdaptSpectrumEigSum) or
  Algorithm~\ref{alg:AdaptSpectrumMax} (AdaptSpectrumMax).
\item Algorithm~\ref{alg:adapt_landscape} (landscape refinement). The marking and
  refinement strategies are given in Algorithm~\ref{alg:AdaptLandscape} (AdaptLandscape).
\end{enumerate}
Algorithm~\ref{alg:adapt_single} is standard, and will not be
discussed in any detail here, apart from clarifying that, although it
targets the $j$th eigenpair, the first $j$ eigenpairs are
computed to ensure that we really have the $j$th in hand to drive the
refinement.  Both Algorithms~\ref{alg:adapt_multi}
and~\ref{alg:adapt_landscape} target the first $M$ eigenpairs of the
operator, and both use the same criterion for convergence.  They
differ (significantly) in how they go about \change{marking elements for
refinement} and deciding how to refine them.  In the case of
Algorithm~\ref{alg:adapt_landscape}, these decisions are based on the
current approximation of the landscape function $u_h$, as described in
Algorithm~\ref{alg:AdaptLandscape}.  For
Algorithm~\ref{alg:adapt_multi}, all of the computed eigenpairs are
used in some way to determine which elements are to be refined, and
how, and two natural strategies are given in
Algorithms~\ref{alg:AdaptSpectrumSum} and~\ref{alg:AdaptSpectrumMax}.
The two key parameters in
Algorithms~\ref{alg:AdaptSingleEig}-\ref{alg:AdaptLandscape} are $r$,
which is the \change{percentage of elements to be marked} for refinement, and
$\mathrm{tol}_\mathrm{ana}$, which dictates the choice of $h$ versus
$p$ refinement for a marked element based on a computed measure of the
local regularity of a function in that element.  The default settings
for these parameters
for our experiments, are $r=10$\% and
$\mathrm{tol}_\mathrm{ana}=0.25$.
The local regularity measure is computed using the method presented in
\cite{houston_note_2005}, which is based on the coefficient decay of the local
Legendre series expansion.  This assessment takes place on 
line 4 of Algorithm~\ref{alg:AdaptSingleEig},
line 6 of Algorithm~\ref{alg:AdaptSpectrumSum}, line 5 of 
Algorithm~\ref{alg:AdaptSpectrumMax}, and line 4 of Algorithm~\ref{alg:AdaptLandscape}.

\begin{algorithm}\caption{Adaptive algorithm for a single eigenpair}\label{alg:adapt_single}
 \hspace*{\algorithmicindent} \textbf{Input} \\
 \hspace*{\algorithmicindent} \hspace*{\algorithmicindent} Initial FE space $S(\mathcal{T})$.\\
 \hspace*{\algorithmicindent} \hspace*{\algorithmicindent} Index $j$ of the eigenpair to compute.\\
 \hspace*{\algorithmicindent} \hspace*{\algorithmicindent} Maximum number of adaptive steps $n_\mathrm{max}$.\\
 \hspace*{\algorithmicindent} \hspace*{\algorithmicindent} Relative tolerance $\mathrm{tol}$.\\
 \hspace*{\algorithmicindent} \textbf{Output} \\
 \hspace*{\algorithmicindent} \hspace*{\algorithmicindent} Approximated eigenpair $(\lambda_h,\phi_h)$.\\
\begin{algorithmic}[1]
\State{$n:=1$}
\While {$n<n_\mathrm{max}$}
	\State{Compute an approximation of $(\lambda_{j,h},\phi_{j,h})$ computing the first $j$ eigenpairs at the bottom of the spectrum for the problem.}
	\State{Compute the error estimator $\eta_{\mathrm{eig},j}$.}
    \If{$\eta_{\mathrm{eig},j}^2/\lambda_{j,h}<\mathrm{tol}$}
    	\State{$(\lambda_h,\phi_h):=(\lambda_{j,h},\phi_{j,h})$ and exit.}
  	\Else
    	\State{Adapt $S(\mathcal{T})$ using $\eta_{\mathrm{eig},j}$ and $\phi_h$ by calling AdaptSingleEig.}
  	\EndIf
  	\State{$n:=n+1$}
\EndWhile{}
\end{algorithmic}
\end{algorithm}

\begin{algorithm} \caption{Adaptive algorithm for a portion of the spectrum}\label{alg:adapt_multi}
 \hspace*{\algorithmicindent} \textbf{Input} \\
 \hspace*{\algorithmicindent} \hspace*{\algorithmicindent} Initial FE space $S(\mathcal{T})$.\\
 \hspace*{\algorithmicindent} \hspace*{\algorithmicindent} Index $M$ of the highest eigenpair to compute.\\
 \hspace*{\algorithmicindent} \hspace*{\algorithmicindent} Maximum number of adaptive steps $n_\mathrm{max}$.\\
 \hspace*{\algorithmicindent} \hspace*{\algorithmicindent} Relative tolerance $\mathrm{tol}$.\\
 \hspace*{\algorithmicindent} \textbf{Output} \\
 \hspace*{\algorithmicindent} \hspace*{\algorithmicindent} Approximated eigenpairs $(\lambda_{j,h},\phi_{j,h})$, for all $j=1,\dots,M$.\\
\begin{algorithmic}[1]
\State{$n:=1$}
\While {$n<n_\mathrm{max}$}
	\State{Compute an approximation of $(\lambda_{j,h},\phi_{j,h})$ computing the first $M$ eigenpairs at the bottom of the spectrum for the problem.}
	\State{Compute the error estimator $\eta_{\mathrm{eig},j}$, for all $j=1,\dots,M$.}
    \If{$\max_{j\leq M}(\eta_{\mathrm{eig},j}^2)<\mathrm{tol}$}
    	\State{Return $(\lambda_{j,h},\phi_{j,h})$, for all $j=1,\dots,M$, and and exit.}
  	\Else
    	\State{Adapt $S(\mathcal{T})$ using $\eta_{\mathrm{eig},j}$ and $\phi_{j,h}$, for all $j=1,\dots,M$ by calling either AdaptSpectrumEigSum orAdaptSpectrumEigMax.}
  	\EndIf
  	\State{$n:=n+1$}
\EndWhile{}
\end{algorithmic}
\end{algorithm}

\begin{algorithm} \caption{Adaptive algorithm using the landscape function}\label{alg:adapt_landscape}
 \hspace*{\algorithmicindent} \textbf{Input} \\
 \hspace*{\algorithmicindent} \hspace*{\algorithmicindent} Initial FE space $S(\mathcal{T})$.\\
 \hspace*{\algorithmicindent} \hspace*{\algorithmicindent} Index $M$ of the highest eigenpair to compute.\\
 \hspace*{\algorithmicindent} \hspace*{\algorithmicindent} Maximum number of adaptive steps $n_\mathrm{max}$.\\
 \hspace*{\algorithmicindent} \hspace*{\algorithmicindent} Relative tolerance $\mathrm{tol}$.\\
 \hspace*{\algorithmicindent} \textbf{Output} \\
 \hspace*{\algorithmicindent} \hspace*{\algorithmicindent} Approximated eigenpairs $(\lambda_{j,h},\phi_{j,h})$, for all $j=1,\dots,M$.\\
\begin{algorithmic}[1]
\State{$n:=1$}
\While {$n<n_\mathrm{max}$}
	\State{Compute an approximation of $(\lambda_{j,h},\phi_{j,h})$ computing the first $M$ eigenpairs at the bottom of the spectrum for the problem.}
	\State{Compute an approximation $u_h$ of the solution of the landscape function problem.}
	\State{Compute the error estimator $\eta_{\mathrm{eig},j}$, for all $j=1,\dots,M$.}
	\State{Compute the error estimator $\eta_\mathrm{land}$.}
    \If{$\max_{j\leq M}(\eta_{\mathrm{eig},j}^2)<\mathrm{tol}$}
    	\State{Return $(\lambda_{j,h},\phi_{j,h})$, for all $j=1,\dots,M$, and and exit.}
  	\Else
    	\State{Adapt $S(\mathcal{T})$ using $\eta_\mathrm{land}$ and $u_h$ by calling AdaptLandscape.}
  	\EndIf
  	\State{$n:=n+1$}
\EndWhile{}
\end{algorithmic}
\end{algorithm}

\begin{algorithm} \caption{AdaptSingleEig: adaptation of the mesh using a single eigenpair}\label{alg:AdaptSingleEig}
 \hspace*{\algorithmicindent} \textbf{Input} \\
 \hspace*{\algorithmicindent} \hspace*{\algorithmicindent} Current FE space $S(\mathcal{T})$.\\
 \hspace*{\algorithmicindent} \hspace*{\algorithmicindent} Percentage $r$ of elements to refine.\\
 \hspace*{\algorithmicindent} \hspace*{\algorithmicindent} Error estimator $\eta_{\mathrm{eig},j}$\\
 \hspace*{\algorithmicindent} \hspace*{\algorithmicindent} Computed solution $\phi_h$.\\
 \hspace*{\algorithmicindent} \hspace*{\algorithmicindent} Threshold $\mathrm{tol}_\mathrm{ana}$ for the smoothness criteria.\\
 \hspace*{\algorithmicindent} \textbf{Output} \\
 \hspace*{\algorithmicindent} \hspace*{\algorithmicindent} Adapted FE space $S(\mathcal{T})$.\\
\begin{algorithmic}[1]
\State{Order the elements of $\mathcal{T}$ in decreasing order with respect to $\eta_{\mathrm{eig},K,j}^2$. }
\State{Mark $r$-percent of the elements from the top of the sorted list. Create a set $\mathcal{M}$ of the marked elements.}
\ForAll{$K\in\mathcal{M}$}                    
	\State {Estimate the smoothness of $\phi_h$ in $K$.}
	\If{The smoothness is below $\mathrm{tol}_\mathrm{ana}$ }
    	\State{Mark the element $K$ to be refined in $p$.}
  	\Else
    	\State{Mark the element $K$ to be refined in $h$.}
  	\EndIf
\EndFor
\State{Enforce local properties for marked elements.}
\State{Adapt $S(\mathcal{T})$.}
\end{algorithmic}
\end{algorithm}

\begin{algorithm} \caption{AdaptSpectrumEigSum: adaptation of the mesh using the sum of all computed eigenpairs}\label{alg:AdaptSpectrumSum}
 \hspace*{\algorithmicindent} \textbf{Input} \\
 \hspace*{\algorithmicindent} \hspace*{\algorithmicindent} Current FE space $S(\mathcal{T})$.\\
 \hspace*{\algorithmicindent} \hspace*{\algorithmicindent} Percentage $r$ of elements to refine.\\
 \hspace*{\algorithmicindent} \hspace*{\algorithmicindent} Error estimators $\eta_{\mathrm{eig},j}$, for all $j=1,\dots,M$\\
 \hspace*{\algorithmicindent} \hspace*{\algorithmicindent} Computed eigenvectors $\phi_{j,h}$, for all $j=1,\dots,M$.\\
 \hspace*{\algorithmicindent} \hspace*{\algorithmicindent} Threshold $\mathrm{tol}_\mathrm{ana}$ for the smoothness criteria.\\
 \hspace*{\algorithmicindent} \textbf{Output} \\
 \hspace*{\algorithmicindent} \hspace*{\algorithmicindent} Adapted FE space $S(\mathcal{T})$.\\
\begin{algorithmic}[1]
\State{For each element $K$, calculate $\eta_{\mathrm{sum},K}^2=\sum_{j=1}^M \eta_{\mathrm{eig},K,j}^2$.}
\State{Sum all the computed eigenpairs $\phi_{\mathrm{sum},h}=\sum_{j=1}^M \phi_{j,h}$.}
\State{Order the elements of $\mathcal{T}$ in decreasing order with respect to $\eta_{\mathrm{sum},K}^2$. }
\State{Mark $r$-percent of the elements from the top of the sorted list. Create a set $\mathcal{M}$ of the marked elements.}
\ForAll{$K\in\mathcal{M}$}                    
	\State {Estimate the smoothness of $\phi_{\mathrm{sum},h}$ in $K$.}
	\If{The smoothness is below $\mathrm{tol}_\mathrm{ana}$ }
    	\State{Mark the element $K$ to be refined in $p$.}
  	\Else
    	\State{Mark the element $K$ to be refined in $h$.}
  	\EndIf
\EndFor
\State{Enforce local properties for marked elements.}
\State{Adapt $S(\mathcal{T})$.}
\end{algorithmic}
\end{algorithm}

\begin{algorithm} \caption{AdaptSpectrumEigMax: adaptation of the mesh using the max of all computed eigenpairs}\label{alg:AdaptSpectrumMax}
 \hspace*{\algorithmicindent} \textbf{Input} \\
 \hspace*{\algorithmicindent} \hspace*{\algorithmicindent} Current FE space $S(\mathcal{T})$.\\
 \hspace*{\algorithmicindent} \hspace*{\algorithmicindent} Percentage $r$ of elements to refine.\\
 \hspace*{\algorithmicindent} \hspace*{\algorithmicindent} Error estimators $\eta_{\mathrm{eig},j}$, for all $j=1,\dots,M$\\
 \hspace*{\algorithmicindent} \hspace*{\algorithmicindent} Computed eigenvectors $\phi_{j,h}$, for all $j=1,\dots,M$.\\
 \hspace*{\algorithmicindent} \hspace*{\algorithmicindent} Threshold $\mathrm{tol}_\mathrm{ana}$ for the smoothness criteria.\\
 \hspace*{\algorithmicindent} \textbf{Output} \\
 \hspace*{\algorithmicindent} \hspace*{\algorithmicindent} Adapted FE space $S(\mathcal{T})$.\\
\begin{algorithmic}[1]
\State{For each element $K$, calculate $\eta_{\mathrm{max},K}^2=\max_{j=1}^M \{\eta_{\mathrm{eig},K,j}^2\}$.}
\State{Order the elements of $\mathcal{T}$ in decreasing order with respect to $\eta_{\mathrm{max},K}^2$. }
\State{Mark $r$-percent of the elements from the top of the sorted list. Create a set $\mathcal{M}$ of the marked elements.}
\ForAll{$K\in\mathcal{M}$}                    
	\State {Estimate the smoothness of $\phi_{i,h}$ in $K$ where $i=\mathrm{argmax}_{j=1}^M \{\eta_{\mathrm{eig},K,j}^2\}$.}
	\If{The smoothness is below $\mathrm{tol}_\mathrm{ana}$ }
    	\State{Mark the element $K$ to be refined in $p$.}
  	\Else
    	\State{Mark the element $K$ to be refined in $h$.}
  	\EndIf
\EndFor
\State{Enforce local properties for marked elements.}
\State{Adapt $S(\mathcal{T})$.}
\end{algorithmic}
\end{algorithm}

\begin{algorithm} \caption{AdaptLandscape: adaptation of the mesh for linear problems}\label{alg:AdaptLandscape}
 \hspace*{\algorithmicindent} \textbf{Input} \\
 \hspace*{\algorithmicindent} \hspace*{\algorithmicindent} Current FE space $S(\mathcal{T})$.\\
 \hspace*{\algorithmicindent} \hspace*{\algorithmicindent} Percentage $r$ of elements to refine.\\
 \hspace*{\algorithmicindent} \hspace*{\algorithmicindent} Error estimator $\eta_\mathrm{land}$\\
 \hspace*{\algorithmicindent} \hspace*{\algorithmicindent} Computed solution $u_h$.\\
 \hspace*{\algorithmicindent} \hspace*{\algorithmicindent} Threshold $\mathrm{tol}_\mathrm{ana}$ for the smoothness criteria.\\
 \hspace*{\algorithmicindent} \textbf{Output} \\
 \hspace*{\algorithmicindent} \hspace*{\algorithmicindent} Adapted FE space $S(\mathcal{T})$.\\
\begin{algorithmic}[1]
\State{Order the elements of $\mathcal{T}$ in decreasing order with respect to $\eta_{\mathrm{land},K}^2$. }
\State{Mark $r$-percent of the elements from the top of the sorted list. Create a set $\mathcal{M}$ of the marked elements.}
\ForAll{$K\in\mathcal{M}$}                    
	\State {Estimate the smoothness of $u_h$ in $K$.}
	\If{The smoothness is below $\mathrm{tol}_\mathrm{ana}$ }
    	\State{Mark the element $K$ to be refined in $p$.}
  	\Else
    	\State{Mark the element $K$ to be refined in $h$.}
  	\EndIf
\EndFor
\State{Enforce local properties for marked elements.}
\State{Adapt $S(\mathcal{T})$.}
\end{algorithmic}
\end{algorithm}


\section{Optimisation of the algorithm}\label{AlgOpt}
Considering Example~\ref{SimpleSchrodinger} as a model problem for
illustrative purposes, we see in Table~\ref{tab:cumulative_pie}
that almost 80\% of the CPU time is spent on the
eigensolver. Moreover, in Figure~\ref{fig:timing}, it is clear that
the CPU needed by the eigensolver is growing faster than any other
part of the code.  In the next two subsections, different ways to
optimise and speed up Algorithm~\ref{alg:adapt_landscape}, the
landscape refinement algorithm, are
presented.
\begin{table}
\begin{center}
\begin{tabular}{ c c }

  Algorithm Parts   & CPU Time (\%)  \\
\hline\\
             Eigensolver    &      77.6\\
             Solver     &     10.7\\
             Error Estimator Eigenproblem   &      9.2\\
             Error Estimator Landscape    &      0.2\\
             Other    &      2.3

\end{tabular}
\end{center}\caption{Cumulative CPU time for different parts of
  Algorithm~\ref{alg:adapt_landscape} on Example~\ref{UnitSquare}.}\label{tab:cumulative_pie}
\end{table}

\begin{figure}
\centering
\includegraphics[width=0.8\textwidth]{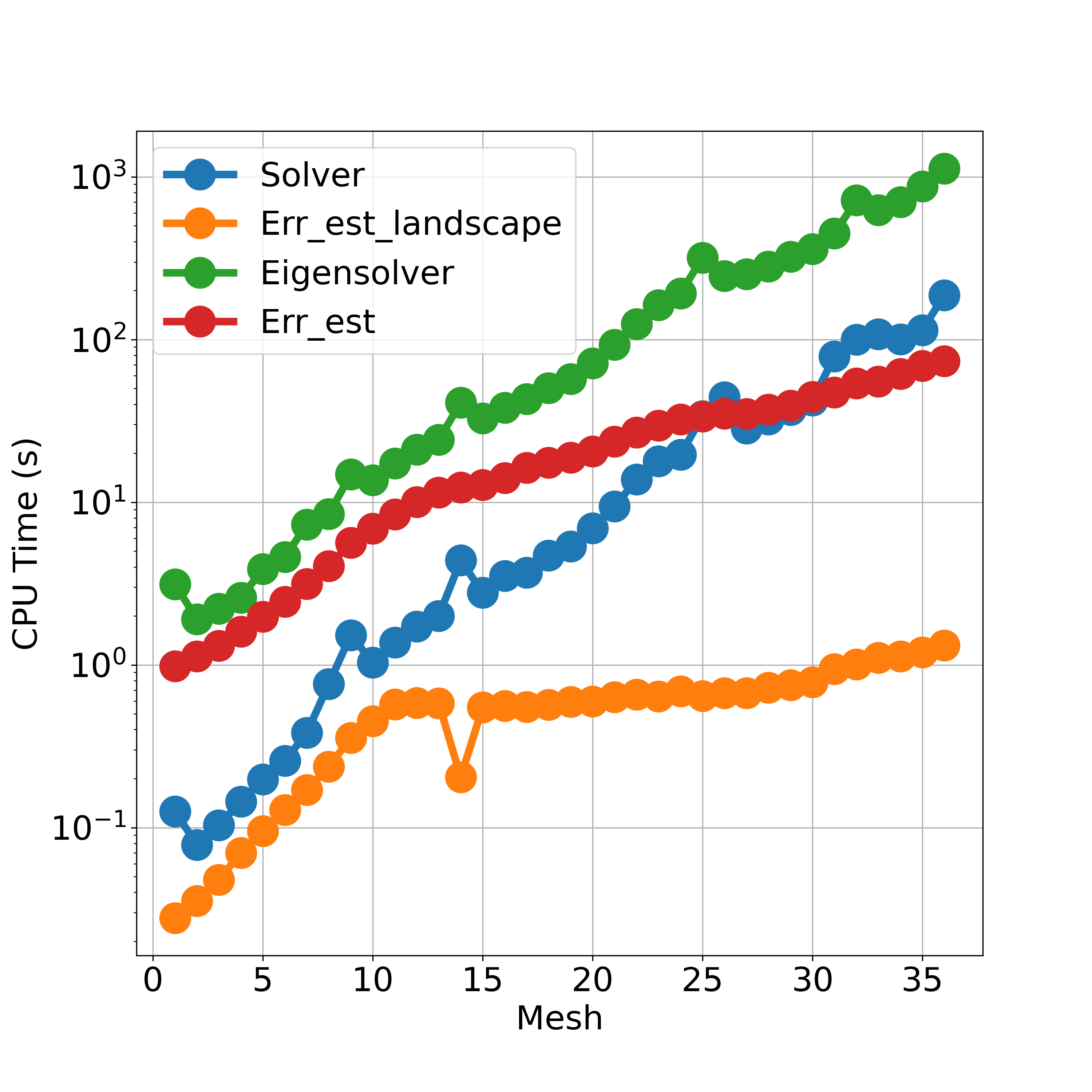}
\caption{ CPU time of different parts of the code during the adaptation of the mesh for Algorithm~\ref{alg:adapt_landscape}.} \label{fig:timing}
\end{figure}

\subsection{Pausing the eigensolver}

Since the computed eigenpairs are not used to refine the mesh, but
only in the stopping criterion in Algorithm~\ref{alg:adapt_landscape},
it makes sense to not call the eigensolver in every iteration.
Algorithm~\ref{alg:adapt_landscape_pause} is a modification of
Algorithm~\ref{alg:adapt_landscape} where a pause is introduced for
the eigensolver. The length of the pause is controlled by
$\ell_\mathrm{pause}\ge 0$ and measured in the number of
iterations. The eigensolver is always called during the first
iteration.  The algorithm cannot terminate while the eigensolver is
paused because the stopping criterion is based on the computed
spectrum.

\begin{algorithm} \caption{Pausing adaptive algorithm using the landscape function }\label{alg:adapt_landscape_pause}
 \hspace*{\algorithmicindent} \textbf{Input} \\
 \hspace*{\algorithmicindent} \hspace*{\algorithmicindent} Initial FE space $S(\mathcal{T})$.\\
 \hspace*{\algorithmicindent} \hspace*{\algorithmicindent} Index $M$ of the highest eigenpair to compute.\\
 \hspace*{\algorithmicindent} \hspace*{\algorithmicindent} Maximum number of adaptive steps $n_\mathrm{max}$.\\
 \hspace*{\algorithmicindent} \hspace*{\algorithmicindent} Relative tolerance $\mathrm{tol}$.\\
 \hspace*{\algorithmicindent} \hspace*{\algorithmicindent} Length of the pause $\ell_\mathrm{pause}$.\\
 \hspace*{\algorithmicindent} \textbf{Output} \\
 \hspace*{\algorithmicindent} \hspace*{\algorithmicindent} Approximated eigenpairs $(\lambda_{j,h},\phi_{j,h})$, for all $j=1,\dots,M$.\\
\begin{algorithmic}[1]
\State{$n:=1$}
\State{$\mathrm{ct}_\mathrm{pause}:=\ell_\mathrm{pause}$}
\While {$n<n_\mathrm{max}$}
	\State{Compute an approximation $u_h$ of the solution of the landscape function problem.}
	\State{Compute the error estimator $\eta_\mathrm{land}$.}
        \If{$\mathrm{ct}_\mathrm{pause}=\ell_\mathrm{pause}$}
		\State{Compute an approximation of $(\lambda_{j,h},\phi_{j,h})$ computing the first $M$ eigenpairs at the bottom of the spectrum for the problem.}
		\State{Compute the error estimator $\eta_{\mathrm{eig},j}$, for all $j=1,\dots,M$.}
		 \If{$\max_{j\leq M}(\eta_{\mathrm{eig},j}^2)<\mathrm{tol}$}
    			\State{Return $(\lambda_{j,h},\phi_{j,h})$, for all $j=1,\dots,M$, and and exit.}
  			\EndIf
		\State{$\mathrm{ct}_\mathrm{pause}:=0$}
	\Else
		\State{$\mathrm{ct}_\mathrm{pause}:=\mathrm{ct}_\mathrm{pause}+1$}
	\EndIf
	
    	\State{Adapt $S(\mathcal{T})$ using $\eta_\mathrm{land}$ and $u_h$ by calling AdaptLandscape.}
  	\State{$n:=n+1$}
\EndWhile{}
\end{algorithmic}
\end{algorithm}

In Table~\ref{tab:pause}, Algorithm~\ref{alg:adapt_landscape_pause} is
used to solve the problem already solved in
Example~\ref{SimpleSchrodinger} using different values for the pause
length. The simulation for $\ell_\mathrm{pause}= 0$ is equivalent to
using Algorithm~\ref{alg:adapt_landscape} and it is used as a
reference to measure the reduction in CPU time.  Since
Algorithm~\ref{alg:adapt_landscape_pause} can only stop during an
iteration with the eigensolver active, the total number of meshes
varies depending on the value of $\ell_\mathrm{pause}$. For the same
reason, also the DOF number on the final mesh varies with
$\ell_\mathrm{pause}$.  For any value of $\ell_\mathrm{pause}> 0$ the
reduction is great---up to 70\%.

\begin{table}
\begin{center}
\begin{tabular}{ c c c c c }

  Pause Length   & N Meshes   &  DOF  &  CPU Time &   Reduction(\%)\\
\hline\\
             0    &      36 & 117805  &  30986.7     &      0\\
             1     &     37 & 128366  &  19291.4    &      37.7428\\
             2    &      37 & 128366  &  14276       &     53.9284\\
             3    &      37 & 128366  &  11672.5    &      62.3306\\
             4    &      36 & 117805  &   8940.47   &      71.1473\\
             5    &      37 & 128366  &   9055.94   &      70.7747

\end{tabular}
\end{center}\caption{Comparison of performances for different values for the pause length.}\label{tab:pause}
\end{table}

Since the FE space in Algorithm~\ref{alg:adapt_landscape_pause} is
adapted just using the landscape solution, it might seem a good idea
to postpone any call to the eigensolver after a certain number of mesh
refinements instead of pausing the eigensolver at
intervals. Unfortunately, this approach does not reduce the total CPU
time by much because the CPU time used by the eigensolver on the
initial meshes is very moderate, see Figure~\ref{fig:timing}.

\subsection{Iterative solvers}

In this section, we further reduce the CPU time by exploring different
eigenvalue solvers. So far we used the ARPACK package
\cite{lehoucq_arpack_1998} which is based on the Arnoldi
method. ARPACK is very fast to compute multiple eigenpairs, however,
in an adapting method, not all eigenpairs reach the desired tolerance
on the same mesh. Therefore, it is common that on some meshes only a
\change{few of the eigenpairs} have not converged yet. ARPACK computes
approximations of all eigenpairs, even of the ones already
approximated well enough on previous meshes.  In
\cite{Giani2021a,solin_iterative_2012}, iterative eigenvalue
solvers based on Picard's and Newton's methods are described. Even if
these solvers might be slower than ARPACK in computing several
eigenpairs, they \change{can be used} to just compute a subset of the eigenpairs
in the computed \change{spectrum that are not necessarily contiguous} in the spectrum.

In Figure~\ref{fig:conv_eig}, the number of not converged eigenpairs
are reported for all computed meshes for the problem solved in
Example~\ref{SimpleSchrodinger}. As can be seen, on the last few
computed meshes, only a few eigenpairs have not converged yet but
ARPACK is anyway computing the entire spectrum of 100
eigenpairs. These calls to the eigensolver are also the most expensive
ones, as can be seen in Figure~\ref{fig:timing}.
Also in Figure~\ref{fig:conv_eig}, the CPU time for three
different eigensolvers are tested from mesh 20 and onwards. ARPACK is
still the fastest method until the last few meshes when the other
methods are faster because only a small number of eigenpairs are
computed. This suggests switching to an iterative solver when the
number of not converged eigenpairs is small enough.

\begin{figure}
\centering
\includegraphics[width=0.48\textwidth]{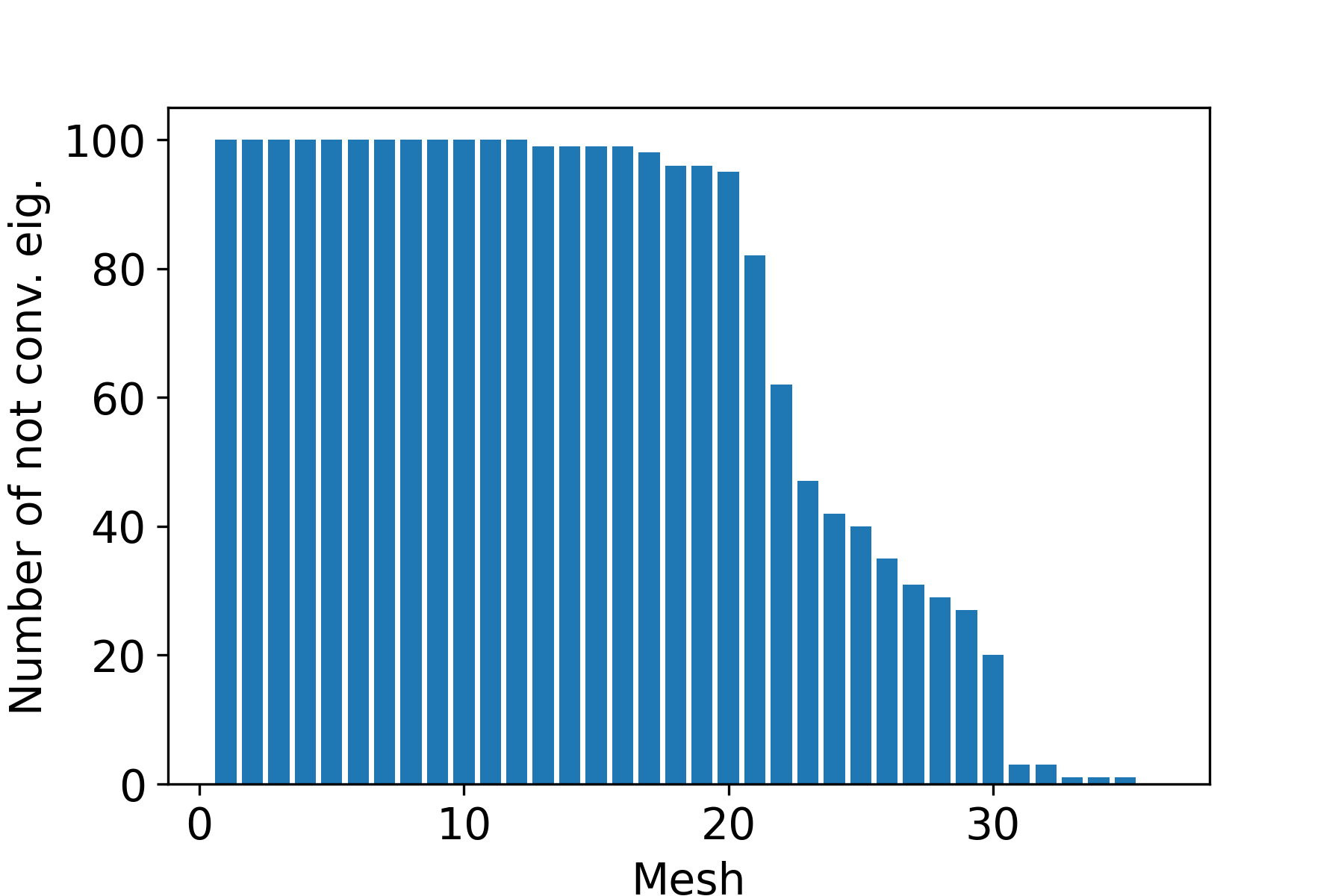}
\includegraphics[width=0.48\textwidth]{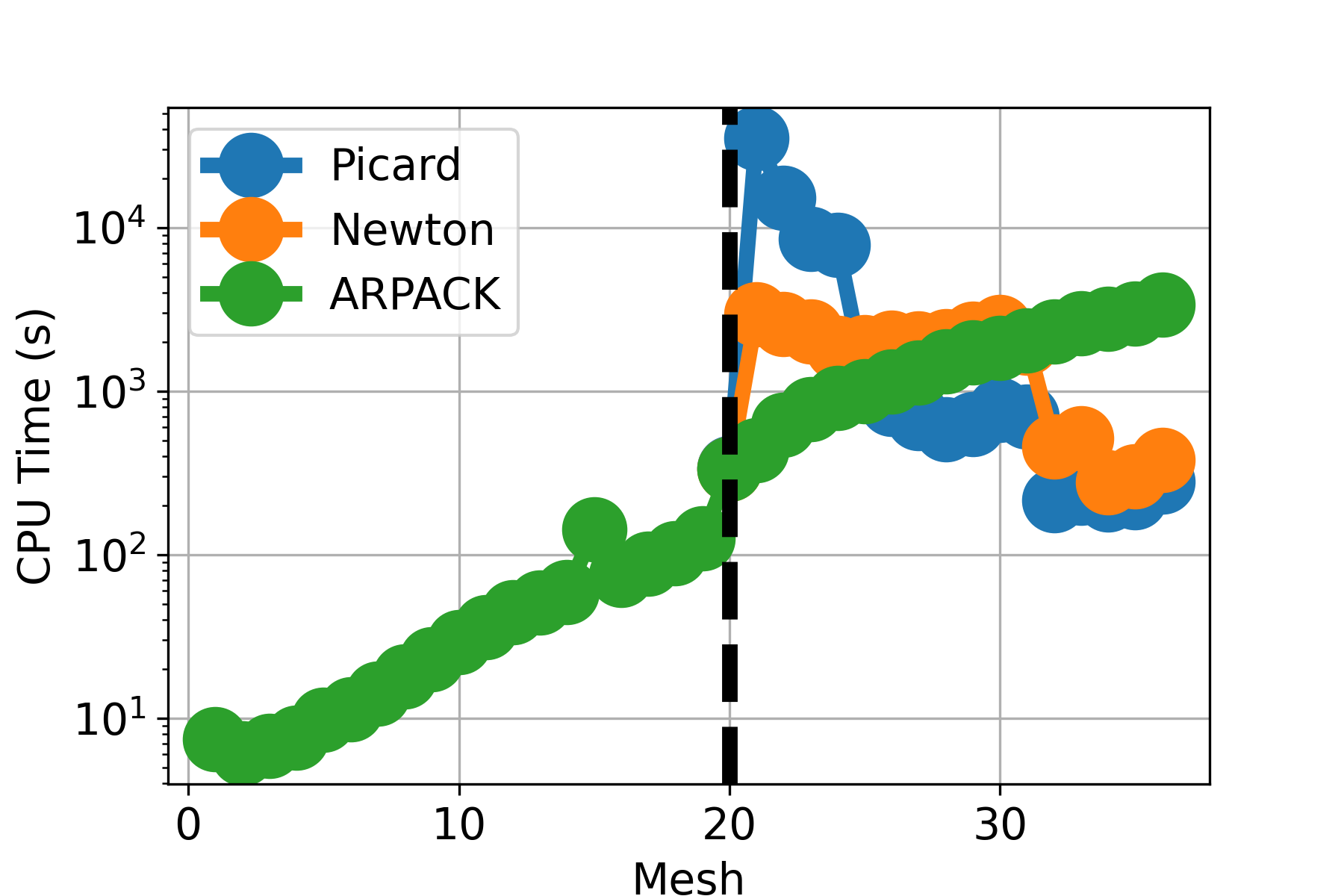}
\caption{Left: Number of not converged eigenpairs on each adapted
  mesh. Right:
CPU time comparison between different eigenvalue solvers} \label{fig:conv_eig}
\end{figure}

In Table~\ref{tab:pause_iterative},
Algorithm~\ref{alg:adapt_landscape_pause} is used to solve the problem
already solved in Example~\ref{SimpleSchrodinger} using different
values for the pause length and using the Picard eigensolver when the
number of not converged eigenpairs is small than 30. The simulation
for $\ell_\mathrm{pause}= 0$ uses only ARPACK and it is equivalent to
using Algorithm~\ref{alg:adapt_landscape} and it is used as a
reference to measure the reduction in CPU time.  Compared to
Table~\ref{tab:pause}, we can see that the use of the iterative solver
reduces further the CPU time.

\begin{table}
\begin{center}
\begin{tabular}{ c c c c c }

  Pause Length   & N Meshes   &  DOF  &  CPU Time &   Reduction(\%)\\
\hline\\
             0 (ARPACK only)    &      36 &  117805  &  30986.7     &      0\\
             1     &     37 & 128366  &  19291.4    &      68.7414\\
             2    &      37 & 128366  &  14276       &     77.0795\\
             3    &      37 & 128366  &  11672.5    &      78.0287\\
             4    &      36 & 117805  &   8940.47   &      80.9712 \\
             5    &      37 & 128366  &   9055.94   &      84.4589

\end{tabular}
\end{center}\caption{Comparison of performances for different values for the pause length and using an iterative solver on the last meshes.}\label{tab:pause_iterative}
\end{table}


\section{Conclusions}\label{Conclusions}
We have introduced a new approach for adaptively approximating
multiple eigenpairs of an elliptic operator $\cL$ via finite element
methods, where the adaptivity is driven not by the computed
eigenpairs, but by the computed solution for a single source
problem---the so-called landscape problem $\cL u = 1$.  Although this
approach was motivated by operators that induce strong spatial
localization of many eigenvectors, and for which the landscape
function has been seen to contain an impressive amount of information
about these eigenvectors, we have demonstrated its utility on a
broader range of problems.  It compares favorably with more
traditional approaches, both in terms of CPU time to reach a certain
convergence criterion, and in terms of conceptual
simplicity---\textit{refine based on approximation of a single
  function instead of figuring out how to incorporate information from
  a large collection of functions}.  We also presented a few modifications of
the algorithm that reduce its computational cost even further.
We believe that our landscape refinement approach provides an
attractive alternative to traditional methods, particularly in cases
where large numbers of eigenpairs are sought; and it can also be very
effective in cases where a smaller cluster of eigenpairs is sought, as
a sensible guide for refinement until the finite element space is
``sufficiently rich''.

\section{Declarations}
\paragraph{\bf Funding}
This work was partially supported by the National Science Foundation
through NSF grant DMS-2012285 and NSF RTG grant DMS-2136228.

\paragraph{\bf Competing Interests}
The authors have no competing interests to declare that are relevant
to the content of this article.

\paragraph{\bf Data Availability}
Data will be made available upon reasonable request.


\medskip

\end{document}